\g@addto@macro\normalsize{%
	\setlength\abovedisplayskip{4pt}
	\setlength\belowdisplayskip{4pt}
	\setlength\abovedisplayshortskip{4pt}
	\setlength\belowdisplayshortskip{4pt}
}
\numberwithin{equation}{section}
\crefname{section}{Section}{Sections}
\crefname{subsection}{Subsection}{Subsections}
\crefname{condition}{Condition}{Conditions}
\crefname{hypothesis}{Hypothesis}{Conditions}
\crefname{assumption}{Assumption}{Assumptions}
\crefname{lemma}{Lemma}{Lemmas}
\crefname{definition}{Definition}{Definitions}
\newtheorem{theorem} {Theorem}[section]
\newtheorem{proposition} [theorem]{Proposition}
\newtheorem{lemma}[theorem]{Lemma}
\newtheorem{counter example}[theorem]{Counter Example}
\newtheorem{remark}[theorem] {Remark}
\def\N{\mathbb{N}}
\def\CC{{\rm \kern.24em \vrule width.02em height1.4ex depth-.05ex \kern-.26emC}}
\def\TagOnRight
\def\AA{{it I} \hskip-3pt{\tt A}}
\def\QQ{\rlap {\raise 0.4ex \hbox{$\scriptscriptstyle |$}} {\hskip -0.1em Q}}
\newcommand{\vo}{\vec{o}\@ifnextchar{^}{\,}{}}
\def\YYint#1#2#3{{\setbox0=\hbox{$#1{#2#3}{\iint}$}
		\vcenter{\hbox{$#2#3$}}\kern-.50\wd0}}
\def\XXint#1#2#3{{\setbox0=\hbox{$#1{#2#3}{\int}$}
		\vcenter{\hbox{$#2#3$}}\kern-.50\wd0}}
\def\namedlabel#1#2{\begingroup
	\def\@currentlabel{#2}%
	\label{#1}\endgroup
}
\newcommand{\rmh}[1]{\mathpalette{\raisem@th{#1}}}
\newcommand{\raisem@th}[3]{\hspace*{-1pt}\raisebox{#1}{$#2#3$}}
\newcounter{desccount}
\newcommand{\descitem}[2]{\item[#1]\refstepcounter{desccount}\label{#2}}
\newcommand{\descref}[2]{\hyperref[#1]{\textnormal{\textcolor{black}{}\textcolor{blue}{ #2}\textcolor{black}{}}}}
\newcommand{\dref}[2]{\hyperref[#1]{\textcolor{black}{(}\textcolor{blue}{\bf #2}\textcolor{black}{)}}}
\newcommand{\be} {\begin{eqnarray}}
	\newcommand{\ee} {\end{eqnarray}}
\newcommand{\Bea} {\begin{eqnarray*}}
	\newcommand{\Eea} {\end{eqnarray*}}
\newcommand{\pa} {\partial}
\newcommand{\al} {\alpha}
\newcommand{\rr}{\rightarrow}
\newcommand{\B} {\beta}
\newcommand{\de} {\delta}
\newcommand{\p}  {\prime}
\newcommand{\e}  {\varepsilon}
\newcommand{\De} {\Delta}
\newcommand{\la} {\lambda}
\newcommand{\si} {\sigma}
\newcommand{\La} {\Lambda}
\newcommand{\f}{\infty}
\newcommand{\R}{\mathbb{R}}
\newcommand{\norm}[1]{\left|\hspace{-0.2mm}\left| #1 \right|\hspace{-0.2mm}\right|}
\newcommand{\abs}[1]{\left| #1\right|}
\newcounter{whitney}
\newcounter{ineqcounter}
\def\ps@pprintTitle{%
	\let\@oddhead\@empty
	\let\@evenhead\@empty
	\def\@oddfoot{}%
	\let\@evenfoot\@oddfoot}
\newcommand{\refcheckize}[1]{%
	\expandafter\let\csname @@\string#1\endcsname#1%
	\expandafter\DeclareRobustCommand\csname relax\string#1\endcsname[1]{%
		\csname @@\string#1\endcsname{##1}\wrtusdrf{##1}}%
	\expandafter\let\expandafter#1\csname relax\string#1\endcsname
}
\newcommand{\mainsectionstyle}{%
	\renewcommand{\@secnumfont}{\bfseries}
	\renewcommand\section{\@startsection{section}{2}%
		\z@{.5\linespacing\@plus.7\linespacing}{-.5em}%
		{\normalfont\bfseries}}%
}
\xpatchcmd{\MaketitleBox}{\hrule}{}{}{}
\xpatchcmd{\MaketitleBox}{\hrule}{}{}{}
\date{}
\title{Existence of BV solutions for $2\times2$ hyperbolic balance laws for $L^\f$ initial data}
\author[1,a]{Boris Haspot}
\author[2,a]{Animesh Jana}
\affil[a]{\footnotesize	 Universit\'e Paris Dauphine, PSL Research University, CEREMADE (UMR CNRS 7534), Place du Mar\' echal De Lattre De Tassigny 75775 Paris cedex 16 (France).}
\affil[1]{\em \footnotesize	 haspot@ceremade.dauphine.fr}
\affil[2]{\em \footnotesize	 animesh.jana@dauphine.psl.eu}
\begin{document}
	\maketitle
\begin{abstract}
	We prove the existence of BV solutions for $2\times 2$ system of hyperbolic balance laws in one space dimension. The flux is assumed to have two genuinely nonlinear characteristic fields. We consider a general force which may possibly depend on time and space variable as well. To prove the existence, we assume the initial data to be small in $L^\f$. Furthermore, we also study qualitative behavior for entropy solutions to hyperbolic system of balance laws.  
\end{abstract}	
	\tableofcontents
	
	\section{Introduction}
	We consider the Cauchy problem  corresponding to a $2\times 2$ hyperbolic balance laws
	\begin{align}
		u_t+f(u)_x&=g(t,x,u),\,\,\,\mbox{ for }(t,x)\in\R_+\times\R,\label{eqn-main}\\
		u(0,x)&=\bar{u}(x),\quad\mbox{ for }x\in\R.\label{eqn-data}
	\end{align}
	In this article, we impose the following condition on the flux function $f$. 
	\begin{description}
		\descitem{($\mathcal{H}_f$)}{H-f} $f:B(0,r)\rr\R^2$ for some $r>0$ is smooth and $Df(0)$ is strictly hyperbolic with both genuinely nonlinear characteristic fields. Moreover, we consider
		\begin{equation}\label{def:strict-hyperbolicity}
			\la_1(u)\leq -\frac{c_0}{2}<0<\frac{c_0}{2}\leq \la_2(u)\mbox{ for all }u\in B(0,r),
		\end{equation}
		for some $c_0>0$.
		
	\end{description}
	
	In addition, we assume the force $g$ verifies the following two properties.
	\begin{description}
		
		\descitem{($\mathcal{H}_g$1.)}{H-g-1} Let $T^*>0$ and $g:[0,T^*)\times\R\times B(0,r)\rr\R^2$ is measurable w.r.t. $t$ and for any $x\in\R$, $u\in B(0,r)$ and $(x,u)\mapsto g(t,x,u)$ is a $C^2$ function for each $t\in [0,T^*)$. Furthermore, we assume that $\sup\limits_{t\in [0,T^*)}\norm{g(t,\cdot,\cdot)}_{C^2}<+\f$.

		\descitem{($\mathcal{H}_g$2.)}{H-g-2} There exist two functions $\omega_1\in L^1(\R)\cap L^\f(\R)$ and $\omega_2\in L^1([0,T^*))\cap L^\f([0,T^*))$ such that 
		\begin{equation}
			\abs{g(t,x,u)}+\norm{D_ug(t,x,u)}\leq \omega_2(t)\mbox{ and }\abs{g_x(t,x,u)}\leq \omega_1(x)\omega_2(t),
		\end{equation}
	 for all $t\in [0,T^*)$, $x\in\R$ and $u\in B(0,r)$.
	\end{description}

	In this article we prove the global existence of weak entropy solution for the Cauchy problem \eqref{eqn-main}--\eqref{eqn-data} when the initial condition in small in $L^\f$. More precisely, we show the following result. 
	\begin{theorem}\label{theorem-main}
		Let $f$ and $g$ be satisfying the assumption \descref{H-f}{($\mathcal{H}_f$)} and \descref{H-g-1}{($\mathcal{H}_g$1.)}--\descref{H-g-2}{($\mathcal{H}_g$2.)} respectively for some $T^*>0$. Then there exists small constant $\eta>0$ such that for any initial data $\bar{u}\in L^1_{loc}(\R;\R^2)$ and force $g$ satisfying 
		\begin{equation}\label{smallness-condition-1}
			\norm{\bar{u}}_{L^\f}+\norm{\omega_1}_{L^1(\R)}+\norm{\omega_2}_{L^1(0,T^*)}\leq\eta,
		\end{equation}
		the Cauchy problem admits a weak entropy solution for $t\in(0,T^*)$.
	\end{theorem}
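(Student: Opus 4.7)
My plan is to construct the solution as a limit of BV approximations, using a Glimm--Lax type decay estimate to control total variation at positive times, with the balance-law structure handled by operator splitting.

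First, I would regularize the initial datum: approximate $\bar u\in L^\infty$ by a sequence $(\bar u_n)\subset BV(\R;\R^2)\cap L^\infty$ with $\|\bar u_n\|_{L^\infty}\leq\|\bar u\|_{L^\infty}$ and $\bar u_n\to\bar u$ in $L^1_{\loc}$. For each $n$, I would build an approximate solution $u_n$ by alternating, on time slabs of length $\Delta t_n\to 0$, an $\e_n$-front-tracking step for the homogeneous system $u_t+f(u)_x=0$ with an ODE step $\dot u=g(t,x,u)$ performed pointwise in $x$. The ODE step increases $\|u\|_{L^\infty}$ by at most $\int\omega_2$ thanks to \descref{H-g-2}{($\mathcal{H}_g$2.)}, while the homogeneous step preserves the range of Riemann coordinates up to arbitrarily small loss; summing in time and using \eqref{smallness-condition-1} gives a uniform bound $\|u_n(t,\cdot)\|_{L^\infty}\leq C\eta$, keeping the state inside $B(0,r)$.

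The core of the argument is a quantitative BV estimate at positive times. For the homogeneous genuinely nonlinear $2\times 2$ system, the Glimm--Lax theory produces $\mathrm{TV}\,u(t,\cdot)\lesssim\|\bar u\|_{L^\infty}/t$ via an interaction potential whose decay rests on genuine nonlinearity \eqref{def:strict-hyperbolicity}. I would adapt this to the splitting scheme: the source step raises total variation by at most $\omega_2(t)\|\omega_1\|_{L^1}$, using the pointwise bound $|g_x|\leq\omega_1(x)\omega_2(t)$, and contributes at most $\|\omega_1\|_{L^1}\|\omega_2\|_{L^1}\leq\eta^2$ to the interaction potential over $(0,T^*)$. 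Provided $\eta$ is small enough, this functional stays small on the whole interval and one obtains
\[
\mathrm{TV}\,u_n(t,\cdot)\leq\frac{C\|\bar u\|_{L^\infty}}{t}+C\|\omega_1\|_{L^1}\int_0^t\omega_2(s)\,ds,\qquad 0<t<T^*,
\]
uniformly in $n$.

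Finally, the uniform BV bound together with the $L^\infty$ bound and the finite propagation speed yields $L^1_{\loc}$ compactness of $(u_n)$ via Helly's theorem. Extracting a subsequence $u_n\to u$, consistency of front tracking plus Lipschitz dependence of the ODE flow on $u$ show that $u$ is a weak entropy solution of \eqref{eqn-main}, and attainment of $\bar u$ at $t=0$ follows from finite speed of propagation. The main obstacle I foresee is Step 3: designing a splitting-adapted Glimm functional that absorbs the source contribution (both the $x$-dependence through $\omega_1$ and the feedback of $u$ through $D_u g$) into the interaction potential, so that the smallness $\eta$ controls the functional uniformly up to time $T^*$ and the $1/t$ decay survives the splitting.
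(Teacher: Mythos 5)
Your overall skeleton (front tracking combined with operator splitting on time slabs, a Glimm--Lax type decay of positive waves, then Helly compactness) is indeed the paper's strategy, but there is a genuine gap at the step you treat as routine: the uniform $L^\infty$ bound. You assert that ``the homogeneous step preserves the range of Riemann coordinates up to arbitrarily small loss,'' so that $\|u_n(t,\cdot)\|_{L^\infty}\leq C\eta$ follows by adding the ODE contribution $\int\omega_2$. For a genuinely nonlinear $2\times2$ system this is not true without substantial work: a Riemann invariant of one family changes by $\mathcal{O}(|\sigma|^3)$ across each shock of the other family (and by interaction errors $\mathcal{O}(|\sigma'\sigma''|(|\sigma'|+|\sigma''|))$ at each collision), so tracking $v_i$ along a minimal backward characteristic requires summing these contributions, which needs a bound on the total wave strength crossed, i.e.\ a BV bound at positive times. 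But that BV bound is exactly what your Step 3 extracts from the decay estimate, whose constants (cf.\ the hypotheses $\|v^\e(t-)\|_{L^\f}\leq 1/(4C_3M_1)$, $TV<M_1$ in Proposition \ref{proposition-Q} and Lemma \ref{lemma-decreasing}) themselves require the $L^\infty$ norm to be small relative to the total variation. Your proposal does not break this circularity; it simply assumes the $L^\infty$ bound.

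The paper resolves this with machinery you do not mention: weighted functionals $\Theta_i$ along minimal backward characteristics (Lemmas \ref{lemma:change-vi}--\ref{lemma:change-vi-1}, Lemma \ref{lemma-decreasing}) give $\|v^\e(t)\|_{L^\f}\leq M\|\bar v\|_{L^\f}$ only locally, on a trapezoid where TV is already controlled; globally in $(0,T^*)$ one runs a dyadic induction over the trapezoids $\bigtriangleup_m$, whose key point is Proposition \ref{proposition-L-infty}: $\|v(t_m)\|_{L^\f}\leq K\sqrt{\eta}$ (note the square root, not $C\eta$). That bound is obtained not from quasi-invariance of Riemann coordinates but from integral estimates of Glimm--Lax/Bianchini--Colombo--Monti type (Propositions \ref{proposition-int-1}--\ref{proposition-int-3}, using the divergence theorem on trapezoids, the invariant-region Lemma \ref{lemma-int-1} when $\partial^2 f_1/\partial u_2^2(0)\neq0$, and the Lipschitz-curve selection Lemma \ref{lemma-tech-3} otherwise), combined with the Oleinik decay of Theorem \ref{thm-construction-1} and a pigeonhole argument over $N$ sub-trapezoids to find a slab where the interaction potential barely decreases. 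Also note that the decay estimate is a bound on $TV^+$ over bounded intervals with the datum's local variation still on the right-hand side, so it only closes within the inductive scheme (Proposition \ref{prop:Oleinik}), not as the global ``$TV\lesssim\|\bar u\|_{L^\infty}/t$'' you state. Without supplying an argument of this kind for the $L^\infty$ control, Steps 2--4 of your proposal do not go through.
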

    \begin{remark}
    	\begin{enumerate}
    		\item We note that $T^*$ can be taken as $T^*=+\f$ to obtain the global weak solutions in the case when $g$ satisfies time integrability in $(0,\f)$.
    		\item For $g(t,x,u)=g(u)$, we take $\omega_1\equiv0$ and $\omega_2\equiv C$ for some $C>0$ and in this case we can apply Theorem \ref{theorem-main} with $T^*<\frac{\eta}{2C}$ and $\norm{\bar{u}}_{L^\f}\leq \eta/2$.
    	\end{enumerate}
    \end{remark}

For $n\times n$ hyperbolic system, existence of BV solutions is first established by Glimm \cite{Glimm} for initial data with small total variation. See also \cite{Bressan-book} for existence via the front tracking approximation. For large total variation, the existence of BV solutions are limited to special systems. For $2\times 2$ hyperbolic system of conservation laws, local existence of BV solutions from initial data with large total variation has been shown by Bressan and Colombo \cite{BrCo-uniq}.

Glimm and Lax \cite{Glimm-Lax} have shown the existence of entropy solution for $2\times 2$ system when both fields are genuinely nonlinear and each field satisfies a condition (see section \ref{section:L-infty} for a brief discussion on this direction). Later, Bianchini et al \cite{Bi-Col-Mon} improved the result for any $2\times 2$ system with genuinely nonlinear fields. 

One of the key tool for proving the BV estimate in positive time from $L^\f$ initial data is establishing Oleinik type estimate. We note that even for scalar balance laws the BV regularizing is limited to uniformly convex flux (see \cite{GGJJ,Lax-57,Oleinik} and references therein).  In the context of hyperbolic system, existence of solutions for a non-BV initial data is less studied. We refer to \cite{BreGoa-jde} where the existence of entropy solutions for non-BV initial data is established for $n\times n$ Temple system by establishing Oleinik type estimates when all fields are genuinely nonlinear. In \cite{Haspot-Junca} the existence of entropy solutions has been proved when data has fractional BV regularity when flux has one genuinely nonlinear field and one linearly degenerate field. For a more general class of $2\times 2$ system, global existence of entropy solutions has been shown in \cite{Glass-24} for initial data having fractional BV regularity. Even for these systems the BV regularizing is limited to the assumption of both fields are genuinely nonlinear (see \cite{BCGJ,GGJJ}).

For balance laws, the existence of BV solutions are mostly studied only for initial data with small total variation. Amadori {\em et al} \cite{AGG} proved global existence of BV solutions when the force $g$ is integrable in $x$ variable. On the hand, for more general force term, local existence of BV solution is know due to Colombo and Guerra \cite{CG07}. We refer to \cite{AGG,AG02} for uniqueness and stability results for entropy solutions for balance laws. We note that in the case of $L^\f$ initial data, the stability remains an open question even for the general $2\times2$ system of conservation laws with both genuinely nonlinear fields.  

For system of conservation laws, qualitative properties of entropy solutions has been studied in \cite{BreLef,Bressan-book}. The decay of positive waves has been shown in \cite{BreLef}. For BV entropy solution, it has been shown that the discontinuities lie in countable family of Lipschitz graphs. For balance laws, the study is limited to special choices of force (see for instance \cite{ACM-19}). The decay of positive waves for balance laws can be found in \cite{GG-04}. We have shown that for a general form of force term $g$, the discontinuities of entropy solutions lie on a countable union of Lipschitz graphs.

Rest of the paper is organized as follows. In the next section, we recall few basic results from and describe the approximation method combining wave front tracking with operator splitting. We prove BV estimates in section when initial data has bounded total variation. The proof of Theorem \ref{theorem-main} is presented in section \ref{section:L-infty} by establishing the $L^\f$ bounds. In the last section \ref{sec:structure}, we study the qualitative properties of solutions.

\section{Outline of the proof}
	Following \cite{Bi-Col-Mon}, we can assume the flux in a simplified form and $\la_1(0)=-1,\la_2(0)=1$. By a change of variable we can assume that $f$ as the following form,
	\begin{equation}\label{formation-f}
		\begin{array}{rl}
			f_1(u)&=-u_1+\frac{\al_{11}}{2}u_1^2+\al_{12}u_1u_2+\frac{\al_{22}}{2}u_2^2+\mathcal{O}(1)\norm{u}^3,\\
			f_2(u)&=u_2+\frac{\B_{11}}{2}u_1^2+\B_{12}u_1u_2+\frac{\B_{22}}{2}u_2^2+\mathcal{O}(1)\norm{u}^3,
		\end{array}
	\end{equation}
	with $\al_{ij}:=\frac{\pa^2f_1}{\pa u_i\pa u_j}(0)$ and $\B_{ij}:=\frac{\pa^2f_2}{\pa u_i\pa u_j}(0)$.  We define $\hat{\la}$ as the maximum speed of characteristics defined as follows
	\begin{equation}\label{def:max-speed}
		\hat{\la}:=\sup\limits_{u\in B(0,r)}\{\abs{\la_i(u)};\,i=1,2\}.
	\end{equation}
	We note that a $2\times 2$ hyperbolic system admits a pair of Riemann invariants $(v_1,v_2)$ and there exists a invertible smooth map $\mathcal{J}:(u_1,u_2)\mapsto (v_1,v_2)$. Next we consider Riemann problem for the following homogeneous equation
	\begin{equation}\label{eqn-homogeneous}
		u_t+f(u)_x=0.
	\end{equation} 
	We consider Riemann data at $x_0$
	\begin{equation}\label{data-Riemann}
		\bar{u}_{Rie}(x)=\left\{\begin{array}{rl}
			u_l&\mbox{ for }x<x_0,\\
			u_r&\mbox{ for }x>x_0
		\end{array}\right.
	\end{equation} 
	%
	%
	%
	%
	%
	Let us recall from \cite{BrCo-semi} an approximation of solution. We denote the $i$-th rarefaction curve as $\varphi_i^+$ and $i$-th shock curve as $\varphi^-_i$ for $i=1,2$. We have
	\begin{align}
		\varphi^+_1(\si;v_1,v_2)=(v_1+\si,v_2),&\quad 	\varphi^-_1(\si;v_1,v_2)=(v_1+\si,v_2+\hat{\varphi}_2(\si,v_1,v_2)\si^3),\\
		\varphi^+_2(\si;v_1,v_2)=(v_1,v_2+\si),&\quad 	\varphi^-_2(\si;v_1,v_2)=(v_1+\hat{\varphi}_1(\si,v_1,v_2)\si^3,v_2+\si).
	\end{align}
	Consider a $C^\f$ function $\xi$ such that
	\begin{equation}
		\left\{\begin{array}{cl}
			\xi(y)=1&\mbox{ if }y\leq -2,\\
			-2\leq \xi^\p(y)\leq 0&\mbox{ if }y\in[-2,-1],\\
			\xi(y)=0&\mbox{ if }y\geq-1.
		\end{array}\right.
	\end{equation}
	Following \cite{BrCo-semi} we define for $\e>0$,
	\begin{equation}
		\psi^\e_i(\si;v_1,v_2)=\xi\left(\frac{\si}{\sqrt{\e}}\right)\varphi^-_i(\si;v_1,v_2)+\left(1-\xi\left(\frac{\si}{\sqrt{\e}}\right)\right)\varphi^+_i(\si;v_1,v_2)\mbox{ for }i=1,2.
	\end{equation}
	Let $u_m$ be the middle state. Denote $v_m=v^m_1,v^m_2)=\mathcal{J}(u_m)$. Then we have
	\begin{equation*}
		(v^r_1,v^r_2)=\psi^\e_2(\si_2;v_1^m,v_2^m)\mbox{ and }(v_1^m,v_2^m)=\psi^\e_1(\si_1;v_1^l,v_2^l).
	\end{equation*}
	If $\si_1\geq0$, there exist $j,k$ such that
	\begin{equation*}
		j\e\leq v_1^l<(j+1)\e\mbox{ and }k\e\leq v_1^m<(k+1)\e.
	\end{equation*}
	We introduce the states $\bar{v}_{i,1}:=i\e$ for $j\leq i\leq k+1$. Following \cite{BrCo-semi} we approximate the rarefaction fan as follows
	\begin{equation}\label{def:rarefaction-partition}
		(v_1^\e(t,x),v_2^\e(t,x))=\left\{\begin{array}{ll}
			(v_1^l,v_2^l)&\mbox{ if }x<\la_1(\bar{v}_{j,1},v_2^l)t,\\
			(\bar{v}_{i,1},v^l_2)&\mbox{ if }\la_1(\bar{v}_{i,1},v_2^l)t<x<\la_1(\bar{v}_{i+1,1},v_2^l)t,\,j+1\leq i\leq k,\\
			(v_1^m,v_2^l)&\mbox{ if }\la_1(\bar{v}_{k,1},v_2^l)t<x<0.
		\end{array}\right.
	\end{equation}
	When $\si_1<0$, it is connected by 1-shock, we define
	\begin{equation}
		(v_1^\e(t,x),v_2^\e(t,x))=\left\{\begin{array}{ll}
			(v_1^l,v_2^l)&\mbox{ if }x<\la_1^\xi(\si_1;v_1^l,v_2^l)t,\\
			(v_1^m,v_2^m)&\mbox{ if }\la^\xi_1(\si_1;v_1^l,v_2^l)t<x<0,
		\end{array}\right.
	\end{equation}
	where $\la_1^\xi(\si_1;v_1^l,v_2^l)$ is defined as
	\begin{equation}
		\la_1^\xi(\si_1;v_1^l,v_2^l):=\xi\left(\frac{\si_1}{\sqrt{\e}}\right)\la_1^s(\si_1;v_1^l,v_2^l)+\left(1-\xi\left(\frac{\si_1}{\sqrt{\e}}\right)\right)\la_1^r(\si_1;v_1^l,v_2^l),
	\end{equation}
	where $\la^{s}_1$ and $\la_1^{r}$ are defined as 
	\begin{align*}
		\la_1^s(\si_1;v_1^l,v_2^l)&:=\la_1((v_1^l,v_2^l);\varphi^-_1(\si_1;v_1^l,v_2^l)),\\
		\la_1^r(\si_1;v_1^l,v_2^l)&:=	\sum\limits_{i}\frac{\mathcal{L}^1([i\e,(i+1)\e]\cap[v_1^m,v_1^l])}{\abs{\si_1}}\la_1(\hat{v}_{i,1},v_2^l).
	\end{align*}
	We consider a similar approximation for 2-waves as well. Now, for a Riemann data \eqref{data-Riemann} with $x_0=0$, we consider the unique approximated solution $u^\e$ consisting the waves approximated as above. See \cite{BrCo-semi} for more details.

	\subsection{Algorithm}
	We consider two sequences
	\begin{equation}
		\{\tau_\nu\}_{\nu\in\N},\,\, \{\e_\nu\}_{\nu\in\N},\,\,0<\tau_\nu\leq \e_\nu\rr0.
	\end{equation}
We set
\begin{equation}
	x_j:=j\e_\nu\mbox{ for }j\in\mathbb{Z}\mbox{ and }t_n=n\tau_\nu\mbox{ for }n\in\N.
\end{equation}
Consider a piecewise constant function $\bar{u}_\nu$ satisfying 
	\begin{equation}
		TV(\bar{u}_\nu)\leq TV(\bar{u})\mbox{ and }\norm{\bar{u}_\nu-\bar{u}}_{L^1}\rr0\mbox{ as }\nu\rr\f.
	\end{equation}
We discretize $g(t,x,u)$ as follows, 
\begin{equation}
	g_\nu(t,x,u):=\sum\limits_{n\geq 1}\sum\limits_{j\in\mathbb{Z}}\chi_{[x_j,x_{j+1})}(x)\chi_{[t_{n-1},t_n)}(t)g_{n,j}(u),
\end{equation}
where $ g_{n,j}(u)$ is defined as
\begin{equation}
      g_{n,j}(u)=\frac{1}{\tau_\nu}\int\limits_{t_{n-1}}^{t_n}\frac{1}{\e_\nu}\int\limits_{x_j}^{x_{j+1}}g(s,y,u)\,dyds.
\end{equation}
Furthermore, we consider discretization of $\omega_1,\omega_2$ as follows,
\begin{align}
	\omega_1^\nu(x)&=\sum\limits_{j\in\mathbb{Z}}\chi_{[x_j,x_{j+1})}(x)\omega_{1,j},\mbox{ where }\omega_{1,j}=\frac{1}{\e_\nu}\int\limits_{x_j}^{x_{j+1}}\omega_1(y)\,dy,\\
	\omega_2^\nu(t)&=\sum\limits_{n\geq1}\chi_{[t_{n-1},t_n)}(t)\omega_{2,n},\mbox{ where }\omega_{2,j}=\frac{1}{\tau_\nu}\int\limits_{t_{n-1}}^{t_n}\omega_2(s)\,ds.
\end{align}
We note that
\begin{equation}
	g_\nu(t,x,u)\leq \omega_1^\nu(x)\omega_2^\nu(t)\mbox{ for all }t>0\mbox{ and }x\in\R.
\end{equation}
\begin{enumerate}
	\item We solve by wave front tracking up to time $\tau_\nu$ and obtain $\e_\nu$-approximate front tracking solution.
	\item At $t=\tau_\nu$, we define
	\begin{equation}
		u_\nu(\tau_\nu +,\cdot)=u_\nu(\tau_\nu -,\cdot)+\tau_\nu g_\nu(\tau_\nu-,\cdot, u_\nu(\tau_\nu-,\cdot)).
	\end{equation}
   \item Having defined $u_\nu(n\tau_\nu+,\cdot)$, we again use the front tracking algorithm until time $t=(n+1)\tau_\nu$.
   \item At $t=(n+1)\tau_\nu$, we define
   \begin{equation}\label{defn:time-step}
   	u_\nu((n+1)\tau_\nu+,\cdot)=u_\nu((n+1)\tau_\nu-,\cdot)+\tau_\nu g_\nu((n+1)\tau_\nu-,\cdot, u_\nu((n+1)\tau_\nu-,\cdot)).
   \end{equation}
\end{enumerate}
By changing speed of the fronts slightly we consider that no two interactions occur at same time for $t\neq t_n$, at any interaction point only two waves meet and at $t=t_n$ there is no interaction between two waves. 

Furthermore, we have taken the following provisions in the construction. Consider a situation when an $1$-wave $\si_1>0$ interact with another wave $\si$. Let $u_l,u_m,u_r$ be the states appearing in the outgoing waves. If $\si_1^+$ is the outgoing $1$-wave. When $\si_1^+>0$ we write a single wave with speed $\la_!(u_m)$ instead of a possible rarefaction fan. See \cite{Bressan-book}. At $t=t_n$ if there is a 1-wave $\si_1^->0$, we do the same procedure, that is, if the outgoing states are $u_l,u_m,u_r$ and the strength of 1-wave is $\si_1^+>0$, we write only single 1-wave with left state $u_l$ and right state $u_m$ and speed $\la_1(u_m)$. We follow the same procedure for 2- rarefaction waves as well.

 \begin{remark}
 	We note that at interaction point $(t,x)$ with $t\neq t_n$ the newly created rarefaction waves are solved as in \eqref{def:rarefaction-partition}. Similarly, at $t=t_n$ the newly created rarefaction waves are written as in \eqref{def:rarefaction-partition}. 
 \end{remark}
Since we have not divided the resultant rarefaction waves in some cases which mean that the positive waves are not necessarily bounded by $\epsilon_\nu$ but we will show that the strength remains of the order $\mathcal{O}(\e_\nu)$ (see Proposition \ref{prop:rarefaction}).

	\section{BV estimates}\label{sec:construction}
	\begin{lemma}[\cite{BrCo-semi}]\label{lemma-1}
		There exists $C_1>0$ such that the following holds true.
		\begin{enumerate}[(i.)]
			\item If a 2-wave $\si_2^-$ interacts with 1-wave $\si_1^-$ and $\si_1^+,\si_2^+$ are outgoing waves then,
			\begin{equation*}
				\abs{\si_1^+-\si_1^-}+\abs{\si^+_2-\si_2^-}\leq C_1\abs{\si_1^-\si_2^-}(\abs{\si_1^-}+\abs{\si_2^-}).
			\end{equation*} 
			\item If two 1-waves $\si^\p,\si^{\p\p}$ interact and produce outgoing waves $\si_1^+,\si_2^+$ then,
			\begin{equation*}
				\abs{\si_1^+-(\si^\p+\si^{\p\p})}+\abs{\si^+_2}\leq C_1\abs{\si^\p\si^{\p\p}}(\abs{\si^\p}+\abs{\si^{\p\p}}).	
			\end{equation*}
			\item If two 2-waves $\si^\p,\si^{\p\p}$ interact and produce outgoing waves $\si_1^+,\si_2^+$ then,
			\begin{equation*}
				\abs{\si^+_1}+\abs{\si_2^+-(\si^\p+\si^{\p\p})}\leq C_1\abs{\si^\p\si^{\p\p}}(\abs{\si^\p}+\abs{\si^{\p\p}}).	
			\end{equation*}
		\end{enumerate}
	\end{lemma}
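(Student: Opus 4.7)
The plan is to work entirely in the Riemann coordinates $(v_1,v_2)=\mathcal{J}(u)$, where the approximate wave curves $\psi^\e_i$ are described in the outline. The crucial structural fact is that, in these coordinates, the rarefaction curves $\varphi^+_i$ are exact translations along the $v_i$-axis, while the shock curves $\varphi^-_i$ agree with $\varphi^+_i$ up to a term of order $\si^3$, namely
\begin{equation*}
\varphi^-_1(\si;v_1,v_2)-\varphi^+_1(\si;v_1,v_2)=(0,\hat\varphi_2(\si,v_1,v_2)\si^3),
\end{equation*}
and symmetrically for $\varphi^-_2$. Because the cutoff $\xi$ only mixes shock and rarefaction branches, the $\psi^\e_i$-curves inherit the same property: $\psi^\e_i(\si;v)-\varphi^+_i(\si;v)$ is $O(\si^3)$ uniformly in $\e$. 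This is what ultimately forces the interaction error to be cubic rather than quadratic.

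For part (i), let $v_l,v_m,v_r$ be the states separated by $\si_2^-$ (left) and $\si_1^-$ (right), so
\begin{equation*}
v_m=\psi^\e_2(\si_2^-;v_l),\qquad v_r=\psi^\e_1(\si_1^-;v_m).
\end{equation*}
After interaction, the outgoing Riemann problem produces the unique pair $(\si_1^+,\si_2^+)$ with $v_r=\psi^\e_2(\si_2^+;\psi^\e_1(\si_1^+;v_l))$. I would define the map
\begin{equation*}
\Phi(\si_1,\si_2;v_l):=\psi^\e_2(\si_2;\psi^\e_1(\si_1;v_l))-\psi^\e_1(\si_1;\psi^\e_2(\si_2;v_l)),
\end{equation*}
and Taylor expand around $(0,0)$. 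Using that both $\psi^\e_1$ and $\psi^\e_2$ are, modulo $O(|\si|^3)$, pure translations along the $v_1$- and $v_2$-axes respectively, the first- and second-order terms of $\Phi$ in $(\si_1,\si_2)$ vanish identically, leaving $\Phi=O(|\si_1\si_2|(|\si_1|+|\si_2|))$. Since $(\si_1^+,\si_2^+)\mapsto\psi^\e_2(\si_2^+;\psi^\e_1(\si_1^+;v_l))$ is a local diffeomorphism with derivative the identity at $0$, inverting and comparing to $(\si_1^-,\si_2^-)$ gives exactly $|\si_1^+-\si_1^-|+|\si_2^+-\si_2^-|\le C_1|\si_1^-\si_2^-|(|\si_1^-|+|\si_2^-|)$.

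For parts (ii) and (iii), the strategy is the same, but now we compose two curves of the same family. Writing $v':=\psi^\e_1(\si';v_l)$ and $v'':=\psi^\e_1(\si'';v')$, I would compare $v''$ with $\psi^\e_1(\si'+\si'';v_l)$. Since $\varphi^+_1$ is a translation, the composition $\varphi^+_1(\si'';\varphi^+_1(\si';v_l))=\varphi^+_1(\si'+\si'';v_l)$ is exact; the discrepancy comes only from the cubic shock corrections encoded by $\hat\varphi_2$, and these enter quadratically in magnitude with an extra factor of $\min(|\si'|,|\si''|)$, giving the asserted $|\si'\si''|(|\si'|+|\si''|)$ bound for the 2-component of the outgoing state. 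The required outgoing $(\si_1^+,\si_2^+)$ is then read off by once more inverting the $\psi^\e$-parametrization, yielding $|\si_1^+-(\si'+\si'')|+|\si_2^+|\le C_1|\si'\si''|(|\si'|+|\si''|)$; case (iii) is completely symmetric.

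The main technical obstacle I expect is the bookkeeping across the four sub-cases of (i), where each of $\si_1^-,\si_2^-$ may be a shock or a rarefaction, together with the presence of the cutoff $\xi(\si/\sqrt{\e})$ in $\psi^\e_i$. The cutoff produces derivatives of size $\e^{-1/2}$, so one must verify that each time $\xi'$ or $\xi''$ appears it is paired with enough powers of $\si$ (from the shock correction $\hat\varphi_i\si^3$) to keep the overall expression $O(\si^3)$ uniformly in $\e$. Once this is checked, the Taylor expansion of $\Phi$ proceeds as in the pure (non-smoothed) setting treated in \cite{BrCo-semi}, and the constant $C_1$ can be chosen independent of $\e$ and of the reference state $v_l\in\mathcal{J}(B(0,r))$.
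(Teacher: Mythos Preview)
The paper does not give its own proof of this lemma; it is simply quoted from \cite{BrCo-semi}. Your sketch is a faithful reconstruction of the argument one finds there: in Riemann coordinates the curves $\psi^\e_i$ coincide with pure coordinate translations up to $O(\si^3)$, so the commutator $\Phi$ in case~(i) and the additivity defect $\psi^\e_1(\si'';\psi^\e_1(\si';v_l))-\psi^\e_1(\si'+\si'';v_l)$ in cases~(ii)--(iii) vanish to second order and are $O(|\si'\si''|(|\si'|+|\si''|))$; the inverse function theorem (the Jacobian of $(\si_1,\si_2)\mapsto\psi^\e_2(\si_2;\psi^\e_1(\si_1;v_l))$ is the identity at the origin) then transfers this bound to the outgoing strengths.

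Your concern about the cutoff is legitimate and resolves exactly as you anticipate: $\xi^{(k)}(\si/\sqrt\e)$ is supported where $|\si|\sim\sqrt\e$ and is always multiplied by the factor $\hat\varphi_i\,\si^3$ coming from the shock--rarefaction discrepancy, so $\partial_\si^k\psi^\e_i$ remains bounded uniformly in $\e$ for $k\le 3$, which is precisely the regularity the Taylor argument consumes.
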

\begin{lemma}
	Let $\hat{u}_l=u_l+\tau g_\nu(t_n-,x_l,w_l)$ and $\hat{u}_r=u_r+\tau g_\nu(t_n-,x_r,w_r)$. Suppose that there exist $\si,\hat{\si}$ such that $u_r=\Phi(\si)[u_l]$ and and $\hat{u}_r=\Psi(\hat{\si})[\hat{u}_l]$. Then we have
	\begin{equation}\label{interaction-estimate-force}
		\abs{\hat{\si}-\si}\leq C_2\tau \omega_2^\nu(t_n-) \left[\abs{\si}+\norm{\omega_{1}^\nu}_{L^1(x_l,x_r)}+\abs{w_l-w_r}\right].
	\end{equation}
\end{lemma}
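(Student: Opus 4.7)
The plan is to view the wave strength as a smooth function of the two endpoints of the wave, and then use the fundamental theorem of calculus along the linear path connecting $(u_l,u_r)$ to $(\hat u_l,\hat u_r)$. Since $\Phi$ and $\Psi$ correspond to the same $i$-th (approximate) wave curve $\psi_i^{\e}$ evaluated at different base states, the inverse function theorem supplies a smooth scalar map $G$ with $\sigma=G(u_l,u_r)$ and $\hat\sigma=G(\hat u_l,\hat u_r)$ on a neighborhood of $0$. The crucial structural identity is $G(a,a)\equiv 0$, which after differentiation yields $(\pa_a G+\pa_b G)(a,a)=0$, so by smoothness
\begin{equation*}
\abs{(\pa_a G+\pa_b G)(a,b)}\leq C\abs{b-a}
\end{equation*}
uniformly on $B(0,r)\times B(0,r)$.

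First, I would apply the fundamental theorem of calculus to obtain
\begin{equation*}
\hat\sigma-\sigma=\tau\int_0^1\Big[\pa_a G\cdot g_\nu(t_n-,x_l,w_l)+\pa_b G\cdot g_\nu(t_n-,x_r,w_r)\Big]\,ds,
\end{equation*}
with the derivatives evaluated at the convex combination of the endpoints, since $\hat u_l-u_l=\tau g_\nu(t_n-,x_l,w_l)$ and $\hat u_r-u_r=\tau g_\nu(t_n-,x_r,w_r)$. Next, I would perform an add/subtract decomposition of the integrand,
\begin{equation*}
\pa_a G\,g_\nu(t_n-,x_l,w_l)+\pa_b G\,g_\nu(t_n-,x_r,w_r)=(\pa_a G+\pa_b G)\,g_\nu(t_n-,x_l,w_l)+\pa_b G\,\Delta g,
\end{equation*}
where $\Delta g:=g_\nu(t_n-,x_r,w_r)-g_\nu(t_n-,x_l,w_l)$. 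Splitting $\Delta g$ through the intermediate point $g_\nu(t_n-,x_l,w_r)$ and using \descref{H-g-2}{($\mathcal{H}_g$2.)} (inherited by the discretization) gives
\begin{equation*}
\abs{\Delta g}\leq \omega_2^\nu(t_n-)\,\norm{\omega_1^\nu}_{L^1(x_l,x_r)}+\omega_2^\nu(t_n-)\,\abs{w_l-w_r}.
\end{equation*}

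Then I would combine the three bounds. Along the integration path, $\abs{b-a}\leq \abs{u_l-u_r}+\tau\omega_2^\nu(t_n-)\leq C(\abs{\sigma}+\tau\omega_2^\nu(t_n-))$ by the Lipschitz property of the wave curve, hence $\abs{\pa_a G+\pa_b G}\leq C(\abs{\sigma}+\tau\omega_2^\nu(t_n-))$; meanwhile $\abs{\pa_b G}$ and $\abs{g_\nu}$ are bounded by a constant and by $\omega_2^\nu(t_n-)$, respectively. Substituting into the integral and absorbing the harmless $\tau^2(\omega_2^\nu)^2$ cross-term by smallness of $\tau$, I arrive at
\begin{equation*}
\abs{\hat\sigma-\sigma}\leq C_2\,\tau\,\omega_2^\nu(t_n-)\Big[\abs{\sigma}+\norm{\omega_1^\nu}_{L^1(x_l,x_r)}+\abs{w_l-w_r}\Big].
\end{equation*}

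The main obstacle will be verifying the diagonal vanishing $\pa_a G+\pa_b G\big|_{a=b}=0$ together with uniform $C^1$-regularity of $G$ for the approximated wave curves $\psi_i^{\e}$ used in \cite{BrCo-semi}; these are only $C^{1,1}$ across the $\sigma=-\sqrt{\e}$ junction between shock and rarefaction parts, so one has to check that the interpolation $\xi(\sigma/\sqrt\e)$ preserves enough smoothness and, more importantly, a $C^1$-bound uniform in $\e$. Once that uniform regularity is established, the computation above is a bookkeeping exercise in the three ingredients: $G$ vanishes on the diagonal, $g_\nu$ is $L^\infty$-in-$u$ with modulus $\omega_2^\nu$, and $g_\nu$ is $BV$-in-$x$ with modulus $\omega_1^\nu\omega_2^\nu$.
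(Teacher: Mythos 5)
Your route is essentially the paper's own: the paper sets $E(\si,d,\tau):=\hat{\si}-\si$ with $d:=g_\nu(t_n-,x_r,w_r)-g_\nu(t_n-,x_l,w_l)$, exploits that $E$ vanishes when $\si=0,\,d=0$ or $\tau=0$, and Taylor-expands to get $\abs{\hat{\si}-\si}=\mathcal{O}(1)\,\tau\lbr \abs{d}+\abs{\si}\rbr$, then bounds $\abs{d}$ exactly as you bound $\Delta g$, by inserting the intermediate value $g_\nu(t_n-,x_r,w_l)$. Your diagonal identity $G(a,a)\equiv 0$ plays the role of the paper's vanishing conditions, and your concern about uniform-in-$\e$ regularity of $\psi_i^\e$ is legitimate but harmless: $\xi(\si/\sqrt{\e})$ has derivative $\mathcal{O}(\e^{-1/2})$, but it multiplies $\varphi_i^--\varphi_i^+=\mathcal{O}(\abs{\si}^3)=\mathcal{O}(\e^{3/2})$ on the transition zone, so the relevant $C^1$ (indeed $C^2$) bounds are uniform in $\e$; the paper does not even comment on this point.

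One step of your write-up does not work as stated: the claim that the cross-term $\tau^2(\omega_2^\nu)^2$ can be ``absorbed by smallness of $\tau$''. That absorption requires $\tau\,\omega_2^\nu(t_n-)\leq C_2\bgh{\abs{\si}+\norm{\omega_1^\nu}_{L^1(x_l,x_r)}+\abs{w_l-w_r}}$, and the bracket can be arbitrarily small for configurations in which $\tau\,\omega_2^\nu(t_n-)$ is not, so no uniform constant $C_2$ comes out of this step, however small $\tau$ is. The fix stays entirely within your computation: along the path use $\abs{b(s)-a(s)}\leq\abs{u_r-u_l}+\tau\abs{\Delta g}\leq C\abs{\si}+\tau\abs{\Delta g}$ rather than $C\abs{\si}+\tau\omega_2^\nu(t_n-)$. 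Then the offending contribution is bounded by $C\tau\,\omega_2^\nu(t_n-)\cdot\tau\abs{\Delta g}\leq C\lbr \tau\,\omega_2^\nu(t_n-)\rbr\,\tau\,\omega_2^\nu(t_n-)\bgh{\norm{\omega_1^\nu}_{L^1(x_l,x_r)}+\abs{w_l-w_r}}$, and since $\tau\,\omega_{2,n}=\int_{t_{n-1}}^{t_n}\omega_2(s)\,ds\leq\norm{\omega_2}_{L^1}$ is uniformly bounded, this is absorbed into the stated right-hand side with a uniform constant. With that one-line correction your argument is complete and coincides with the paper's proof.
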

\begin{proof}
	Let $E$ be defined as follows
	\begin{equation}
		E(\si,d,\tau):=\hat{\si}-\si\mbox{ where }d:=g_\nu(t_n-,x_r,w_r)-g_\nu(t_n-,x_l,w_l).
	\end{equation}
Observe that
\begin{equation}
	E(0,0,\tau)=E(\si,d,0)=0,\mbox{ and }\pa_dE(\si,0,0)=0,\pa_\tau E(0,0,\tau)=0.
\end{equation}
Then we have
\begin{align*}
	E(\si,d,\tau)&=E(\si,d,\tau)-E(\si,0,0)\\
	&=\int_0^{1}\left[d\pa_dE(\si,\theta d, \theta \tau)+\tau\pa_\tau E(\si,\theta d, \theta \tau)\right]\,d\theta\\
	&=d\int_0^{1}\left[\pa_dE(\si,\theta d, \theta \tau)-\pa_dE(\si,\theta d, 0)\right]\,d\theta\\
	&+\tau\int_0^{1}\left[\pa_\tau E(\si,\theta d, \theta \tau)-\pa_\tau E(0,0, \theta \tau)\right]\,d\theta\\
	&=d\tau \int_0^{1}\int_0^{1}\theta\pa_{d\tau}E(\si,\theta d, \xi\theta \tau)\,d\xi d\theta\\ 
	&+\tau\int_0^{1}\left[\si\pa_{\si \tau} E(\xi\si,\xi\theta d, \theta \tau)+\theta d\pa_{d\tau} E(\xi\si,\xi\theta d, \theta \tau)\right]\,d\xi d\theta\\
	&=\mathcal{O}(1)\tau \left[\abs{d}+\abs{\si}\right].
\end{align*}
We note that
\begin{align*}
	\abs{d}&\leq \abs{g_\nu(t_n-,x_r,w_r)-g_\nu(t_n-,x_r,w_l)}+\abs{g_\nu(t_n-,x_r,w_l)-g_\nu(t_n-,x_l,w_l)}\\
	&\leq \omega_{2,n}\left[\abs{w_l-w_r}+\norm{\omega_1^\nu}_{L^1(x_l,x_r)}\right].
\end{align*}
Hence the inequality \eqref{interaction-estimate-force} follows.
\end{proof}

	To estimate the $L^\f$ bound of solutions we use generalized characteristics corresponding to each field. Fix $X\in\R$ and $T>0$. For $i=1,2$ we consider a Lipschitz curve $x_i(t)$ for $i$-th field, 
	\begin{equation}
		\left.\begin{array}{rl}
			&\dot{x}_i(t)\in [	\underline{\la}_i(t,x),	\overline{\la}_i(t,x)],\\
			&x_i(T)=X,
		\end{array}\right\}
	\end{equation}
	where $\underline{\la}_i,\overline{\la}_i$ are defined 
	\begin{align*}
		\underline{\la}_i(t,x)&:=\min\{\la_i(u^\e(t,x-)),\la_i(u^\e(t,x+))\},\\
		\overline{\la}_i(t,x)&:=\max\{\la_i(u^\e(t,x-)),\la_i(u^\e(t,x+))\}.
	\end{align*}
	Now, we define {\em minimal backward characteristic} $y_i(t)$ emanating from $(T,X)$ as follows,
	\begin{equation}
		y_i(t):=\min\{x(t):\,\, x\mbox{ is a generalized $i$-th characteristic through $(T,X)$}\}.
	\end{equation}
We make the following observations for $i=1$ case.
\begin{enumerate}
	\item Due to the Lax entropy condition, $y_i(t)$ never crosses any 1-wave $\si_1<0$. Similarly, $y_1(t)$ can not pass through an interaction involving 1-wave with strength $\si_1<-\sqrt{\epsilon_\nu}$. The curve $y_1(t)$ can coincide with a 1-wave on a non-trivial interval of time when $\si_1\ge0$. As we consider the convention that for a BV function, $u(x)=u(x-)$, in the later case by $v_1(t,y_1(t))$ we mean $v_1(t,y_1(t))=v_1(t,y_1(t)-)$.
	\item We also note that any $y_1(t)$ can leave (as $t$ increases) from the 1-wave of strength $\si_1\ge-{\epsilon_\nu}$ in the left (as in the Figure \ref{fig-1}). Since $v_1(t,y_1(t))=v_1(t,y_1(t)-)$ when $y_1$ leave the front $v_1(t,y_1(t))$ does not change.

	\item We note that if $y_1(t)$ crosses an interaction point between a 2-wave $\si_2^-\geq -\epsilon_\nu$ and a 1-wave $\si_1^-\geq -\epsilon_\nu$, then $v_1(t,y_1(t))$ does not change.
	\item Suppose $y_1(t)$ crosses an interaction point of two 1-waves $\si^\p,\si^{\p\p}\geq -\epsilon_\nu$ and the outgoing 1-wave has positive strength. Then we note that $\De  v_1(t,y_1(t))=0$. To see this let us assume that $v^L,v^M,v^R$ be the states before interaction, then the out going wave is linear wave between $v^L$ and $v^R$. By the convention we have $v_1(t+,y_1(t+))=v^L_1=v_1(t-,y_1(t-))$.
\end{enumerate}	
	
	\begin{figure}[ht]
		\centering
		\begin{tikzpicture}

			\draw[thin][color=black] (-2.8 ,1) -- (-1,2.8) ;
			\draw[thin][color=black] (-2.8 ,1) -- (-0.6,2.8) ;
			\draw[thin][color=black] (-2.8 ,1) -- (-1.3,2.8) ;
			\draw[thin][color=black] (-2.8 ,1) -- (-4.5,2.5) ;
			
			\draw[thick,dashed][color=black] (-3.5,-0.5) -- (-2.8 ,1) -- (-2.05 ,1.9) -- (-1.8,2.9);
			
			\draw[thin][color=black] (-2.8 ,1) -- (-1,-0.2) ;
			\draw[thin][color=black] (-2.8 ,1) -- (-1.4,-0.7) ;
			
			\draw[thick] (-2.8,1) node[anchor=east] {\tiny$(t_0,x_0)$};
			
			\draw[thick] (-3.3,-0.2) node[anchor=east] {\tiny$y_2(t)$};
			
			\draw[thick] (-0.7,0.2) node[anchor=east] {\tiny$\si^{\p\p}$};
			\draw[thick] (-1.5,-0.6) node[anchor=east] {\tiny$\si^{\p}$};
			
			\draw[thick] (-4.5,2.5) node[anchor=east] {\tiny$\si^+_1$};
			\draw[thick](-1,2.8)  node[anchor=south] {\tiny$\si^+_2$};
			
			\draw[thick][color=black] (1.2 ,1) -- (5.9,1) ;
			
			\draw[thin][color=black] (3.5 ,1) -- (1.4,2.8) ;
			\draw[thin][color=black] (3.5 ,1) -- (1.9,2.8) ;
			\draw[thin][color=black] (3.5 ,1) -- (2.3,2.8) ;
			\draw[thin][color=black] (3.5 ,1) -- (5.5,2.8) ;
			
			\draw[thick,dashed][color=black] (4,-0.5) -- (3.5 ,1) -- (2.9 ,1.9) -- (2.7,2.9);
			
			\draw[thin][color=black] (3.5 ,1) -- (2,-0.8) ;
			
			\draw[thick] (1.2 ,1) node[anchor=south] {\tiny$t=t_n$};
			
			\draw[thick] (4,-0.4) node[anchor=west] {\tiny$y_1(t)$};
			
			\draw[thick] (2.2,-0.6) node[anchor=east] {\tiny$\si_-$};
			
			\draw[thick] (5.5,2.8) node[anchor=east] {\tiny$\si^+_2$};
			\draw[thick] (1.9,2.8)  node[anchor=south] {\tiny$\si^+_1$};

			\draw[thick] (-2.8,-1.75) node[anchor=south] {Fig 1a};
			\draw[thick] (3.5,-1.75) node[anchor=south] {Fig 1b};

		\end{tikzpicture}
		\caption{In Fig 1a, it illustrates at the point $(t_0,x_0)$ two 1-waves $\si^\p,\si^{\p\p}$ interacts and the outgoing waves are $\si_1^+$ and $\si_2^+$. In this situation, the dotted line denotes a possible minimal backward characteristic of 2-family. in Fig 1b, a 1-wave $\si_-$ changes at $t=t_n$ due to the effect of ODE. The outgoing waves are $\si_1^+,\si_2^+$. In this situation, the dotted line denotes a possible minimal backward characteristic of 1-family.}
		\label{fig-1}
	\end{figure}

	\begin{lemma}\label{lemma:change-vi}
		Let $t>0$ be such that $v_1(t+,y_1(t+))\neq v_1(t-,y_1(t-))$. Then, one of the following is true
		\begin{enumerate}
			\item $y_1$ crosses a 2-wave $\si_2$,
			\begin{equation}\label{change-v_1-1}
				\abs{v_1(t+,y_1(t+))}-\abs{v_1(t-,y_1(t-))}\leq C_3\abs{\si_2}^3.
			\end{equation}
			\item $y_1$ goes through an interaction point of two waves $\si^\p,\si^{\p\p}$ of second family and,
			\begin{equation}\label{change-v_1-2}
				\abs{v_1(t+,y_1(t+))}-\abs{v_1(t-,y_1(t-))}\leq C_3(\abs{\si^\p}+\abs{\si^{\p\p}})^3.
			\end{equation}

		\end{enumerate}
	\end{lemma}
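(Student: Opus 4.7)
My strategy is to enumerate every event along the curve $t\mapsto y_1(t)$ that could produce a jump in $s\mapsto v_1(s,y_1(s))$, and then to eliminate all but the two listed situations using the four observations recorded just before the lemma together with \cref{lemma-1}.

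Since the $\e_\nu$-approximate front tracking solution $u^\e$ is piecewise constant with finitely many fronts on any bounded time interval, $s\mapsto v_1(s,y_1(s))$ can fail to be continuous only at isolated times at which $y_1$ either crosses, enters, or leaves a single front, or else passes through an interaction point of two fronts. The proof proceeds by exhausting this finite list of topologies. The first observation (Lax condition) forbids $y_1$ from crossing a 1-shock. The second observation together with the convention $v_1(t,y_1(t))=v_1(t,y_1(t)-)$ shows that coinciding with or leaving a 1-rarefaction produces no jump in $v_1$. The third observation eliminates mixed $1$-$2$ interactions, and the fourth eliminates $1$-$1$ interactions whose outgoing $1$-wave has nonnegative strength. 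The only remaining topologies are precisely those of the statement: crossing a single $2$-wave (case 1), and crossing an interaction point of two $2$-waves (case 2).

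For case 1, I would substitute the explicit parametrization of the approximate wave curve: the rarefaction branch $\varphi_2^+(\si;v_1,v_2)=(v_1,v_2+\si)$ leaves $v_1$ unchanged, while the shock branch $\varphi_2^-(\si;v_1,v_2)=(v_1+\hat\varphi_1(\si,v_1,v_2)\si^3, v_2+\si)$ shifts $v_1$ by $\mathcal{O}(\si^3)$, and the convex combination $\psi_2^\e$ inherits the same cubic bound, yielding \eqref{change-v_1-1}. For case 2, \cref{lemma-1}(iii) supplies $|\si_1^+|\le C_1|\si'\si''|(|\si'|+|\si''|)\le C_1(|\si'|+|\si''|)^3$ for the outgoing $1$-wave, and the additional contribution from the outgoing $2$-wave is controlled by the analysis of case 1 applied to $\si_2^+$ and is itself $\mathcal{O}((|\si'|+|\si''|)^3)$; summing yields \eqref{change-v_1-2} after choosing $C_3$ sufficiently large.

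The main difficulty I anticipate is the bookkeeping in the elimination step: at each interaction one must correctly identify which of the pre-interaction states $v^L,v^M,v^R$ and the post-interaction states the minimal backward $1$-characteristic $y_1$ selects on either side of the interaction time, since observations three and four yield \emph{exact} equality only for the correct identification. The definition of $y_1$ as the leftmost admissible generalized characteristic together with the left-continuity convention pins down this choice, but careful case analysis (ideally accompanied by a small picture for each topology, in the spirit of Figure \ref{fig-1}) will be needed. The times $t=t_n$ corresponding to the ODE splitting step are not explicitly addressed in the statement; I would treat them separately, understanding the lemma as restricted to $t\notin\{t_n\}$, since the change in $v_1$ across such $t$ is controlled by the source term rather than by wave strengths.
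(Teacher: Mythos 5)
Your proposal is correct and follows essentially the same route as the paper: case 1 is exactly the cubic deviation of the (approximate) 2-wave curve in the first Riemann invariant, and case 2 combines the interaction estimate of Lemma \ref{lemma-1}(iii) for the outgoing 1-wave $\si_1^+$ with a cubic correction across a 2-wave, noting as you do that the crossing can only occur when $\si_1^+>0$ so the post-interaction value of $v_1$ lies between $v_1^l$ and $\hat{v}_1^m$. The only difference is bookkeeping: the paper estimates the cubic term through the incoming waves, $\abs{v_1^l-v_1^r}\leq C_2(\abs{\si^\p}^3+\abs{\si^{\p\p}}^3)$, rather than through the outgoing 2-wave $\si_2^+$ as you propose, which yields an equivalent bound since $\abs{\si_2^+}=\mathcal{O}(\abs{\si^\p}+\abs{\si^{\p\p}})$.
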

\begin{proof}
	We note that \eqref{change-v_1-1} follows from the definition of $S_2$ curve. To prove the estimate \eqref{change-v_1-2}, let $v^l,v^m,v^r$ be the three states before interaction at $(t_0,y_1(t_0))$. Let $\hat{v}^m$ be the middle state arising in the Riemann problem solution for $(v^l,v^r)$. Let $\si_1^+,\si_2^+$ be the strength of the outgoing waves. Note that $y_1(t)$ crosses the interaction point only if $\si_1^+>0$. Then $v_1(t_0+,y(t_0+))\in[v_1^l,\hat{v}_1^m]$ and $v_1(t_0-,y_1(t_0-))=v_1^r$. Hence, we have
	\begin{align*}
		&\abs{v(t_0+,y_1(t_0+))}-\abs{v(t_0-,y_1(t_0-))}\\
		&\leq \abs{v^l_1-v^r_1}+\abs{\si_1^+}\\
		&\leq C_2(\abs{\si^\p}^3+\abs{\si^{\p\p}}^3)+C_2\abs{\si^\p\si^{\p\p}}(\abs{\si^\p}+\abs{\si^{\p\p}})\\
		&\leq C_3(\abs{\si^\p}+\abs{\si^{\p\p}})^3.
	\end{align*}

\end{proof}
An analogous result also holds for second family minimal characteristic $y_2$ as well. 
	\begin{lemma}\label{lemma:change-vi-1}
			Let $t_n>0$ be such that $v_1(t_n+,y_1(t_n+))\neq v_1(t_n-,y_1(t_n-))$. Then, one of the following is true
		\begin{enumerate}
			\item When $y(t_n)\neq x_j$ for all $j\in\mathbb{Z}$,
			\begin{enumerate}
				
			\item $(t_n\pm,y_1(t_n\pm))$ are not discontinuity points of $u_\nu(t_n\pm,\cdot)$ respectively, then 
			\begin{equation}\label{change-v_1-t_n-1}
				\abs{v_1(t_n+,y_1(t_n+))}-\abs{v_1(t_n-,y_1(t_n-))}\leq C_3\tau_\nu \omega_{2,n}.
			\end{equation}
			\item $(t_n-,y_1(t_n-))$ is the end point of a 1-wave $\si_1^-$ in $(t_n-\epsilon,t_n)$ for some small $\epsilon>0$, then 
			\begin{equation}\label{change-v_1-t_n-2}
				\abs{v_1(t_n+,y_1(t_n+))}-\abs{v_1(t_n-,y_1(t_n-))}\leq C_3\tau_\nu \omega_{2,n}+C_3\abs{\si_1^-}\tau_\nu \omega_{2,n}.
			\end{equation}
		   \item $(t_n-,y_1(t_n-))$ is the end point of a 2-wave $\si_2^-$ in $(t_n-\epsilon,t_n)$ for some small $\epsilon>0$, then 
		   \begin{align}
		   	&\abs{v_1(t_n+,y_1(t_n+))}-\abs{v_1(t_n-,y_1(t_n-))}\nonumber\\
		   	&\leq C_3\left(\tau_\nu \omega_{2,n}+\tau_\nu \omega_{2,n}\abs{\si_2^-}+\abs{\si_2^-}^3\right). \label{change-v_1-t_n-3}
		   \end{align}

	\end{enumerate}
            \item When $y(t_n)= x_j$ for some $j\in\mathbb{Z}$,
            \begin{enumerate}
            	\item $(t_n-,y_1(t_n-))$ is the end point of a 1-wave $\si_1^-$ in $(t_n-\epsilon,t_n)$ for some small $\epsilon>0$, then 
            	\begin{align}
            		&\abs{v_1(t_n+,y_1(t_n+))}-\abs{v_1(t_n-,y_1(t_n-))}\nonumber\\
            		&\leq C_3\tau_\nu \omega_{2,n}+C_3\tau_\nu \omega_{2,n}\abs{\si^-_1}. \label{change-v_1-t_n-4}
            	\end{align}
            	\item $(t_n-,y_1(t_n-))$ is the end point of a 2-wave $\si_2^-$ in $(t_n-\epsilon,t_n)$ for some small $\epsilon>0$, then 
            	\begin{align}
            		&\abs{v_1(t_n+,y_1(t_n+))}-\abs{v_1(t_n-,y_1(t_n-))}\nonumber\\
            		&\leq C_3\tau_\nu \omega_{2,n}+C_3\abs{\si^-_2}^3+C_3\tau_\nu \omega_{2,n}\left(\abs{\si^-_2}+\e_\nu\omega_{1,j}\right). \label{change-v_1-t_n-5}
            	\end{align}

            \end{enumerate}
		\end{enumerate}
			\end{lemma}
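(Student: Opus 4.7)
The proof proceeds case-by-case, keeping careful track of the trace values of $u_\nu$ on each side of $y_1(t_n)$ before and after the operator-splitting step. Throughout, the conventions $v_1(t,y_1(t))=v_1(t,y_1(t)-)$ and the update rule \eqref{defn:time-step} are the main engines, combined with the interaction estimate \eqref{interaction-estimate-force}, Lemma \ref{lemma-1}, and the cubic-shock correction already exploited in Lemma \ref{lemma:change-vi}.

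\textbf{Case 1(a).} With no discontinuity at $(t_n\pm,y_1(t_n\pm))$, the left trace $u_\nu(t_n-,y_1(t_n)-)$ is unambiguous, and by \eqref{defn:time-step} it evolves to $u_\nu(t_n-,y_1(t_n)-)+\tau_\nu g_{n,j}(u_\nu(t_n-,y_1(t_n)-))$. The uniform bound $|g_\nu|\le \omega_{2,n}$ together with the smoothness of the Riemann-invariant map $\mathcal{J}$ immediately yields \eqref{change-v_1-t_n-1}.

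\textbf{Cases 1(b) and 1(c).} Here a single wave of strength $\sigma^-_1$ or $\sigma^-_2$ lies at $y_1(t_n-)$, but $y_1(t_n)\in(x_j,x_{j+1})$, so $g_\nu$ reduces to the single value $g_{n,j}$ on both sides. Apply the ODE update separately to $u_l,u_r$, producing $\hat u_l=u_l+\tau_\nu g_{n,j}(u_l)$ and $\hat u_r=u_r+\tau_\nu g_{n,j}(u_r)$, and re-solve the Riemann problem $\hat u_l\to\hat u_r$. Since both endpoints lie in a single cell, $\|\omega_1^\nu\|_{L^1(x_l,x_r)}=0$ in \eqref{interaction-estimate-force}; combined with $|w_l-w_r|\lesssim |\sigma_i^-|$ (up to cubic corrections on the shock branch), this controls the perturbations of the outgoing wave strengths by $C\tau_\nu\omega_{2,n}|\sigma_i^-|$. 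The change in the left-state value, $v_1(\hat u_l)-v_1(u_l)=\mathcal O(\tau_\nu\omega_{2,n})$, produces the first terms of \eqref{change-v_1-t_n-2} and \eqref{change-v_1-t_n-3}, while the wave-perturbation contributes the extra $|\sigma_1^-|\tau_\nu\omega_{2,n}$ or $|\sigma_2^-|\tau_\nu\omega_{2,n}$ term. In case 1(c) the minimal characteristic must additionally traverse the surviving 2-wave, which adds the cubic correction $C_3|\sigma_2^-|^3$ coming from the parametrization of $\varphi_2^-$ exactly as in Lemma \ref{lemma:change-vi}(1).

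\textbf{Cases 2(a) and 2(b).} Now $y_1(t_n)=x_j$, and the force takes different values $g_{n,j-1}$ and $g_{n,j}$ on the two sides of the grid point. By \descref{H-g-2}{($\mathcal{H}_g$2.)} and the definition of $\omega_1^\nu$, their difference satisfies
\begin{equation*}
|g_{n,j}(u)-g_{n,j-1}(u)|\le \omega_{2,n}\int_{x_{j-1}}^{x_{j+1}}\omega_1(y)\,dy \le 2\,\omega_{2,n}\,\varepsilon_\nu\omega_{1,j}.
\end{equation*}
Plugging $\|\omega_1^\nu\|_{L^1(x_l,x_r)}=\varepsilon_\nu\omega_{1,j}$ into \eqref{interaction-estimate-force} and re-solving the Riemann problem as in the previous cases yields the extra correction $\tau_\nu\omega_{2,n}\varepsilon_\nu\omega_{1,j}$ appearing in \eqref{change-v_1-t_n-5}. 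In case 2(a) the incoming 1-wave means that the relevant left-state $\hat u_l$ only involves $g_{n,j-1}(u_l)$, so the grid-point heterogeneity never enters the bound on $v_1(\hat u_l)-v_1(u_l)$; only the wave-strength perturbation $\tau_\nu\omega_{2,n}|\sigma_1^-|$ survives, giving \eqref{change-v_1-t_n-4}. In case 2(b) all three contributions combine: the pure ODE change $\tau_\nu\omega_{2,n}$, the geometric $|\sigma_2^-|^3$ correction from crossing the surviving 2-wave (Lemma \ref{lemma:change-vi}), and the wave-plus-grid perturbation $\tau_\nu\omega_{2,n}(|\sigma_2^-|+\varepsilon_\nu\omega_{1,j})$, producing \eqref{change-v_1-t_n-5}.

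The main obstacle is case 2(b), where one must simultaneously account for (i) distinct force values on the two sides of $x_j$, (ii) the wave-family rearrangement after re-solving the Riemann problem at $\hat u_l\to\hat u_r$, and (iii) the cubic contribution from the surviving 2-wave. The delicate point is isolating the $\varepsilon_\nu\omega_{1,j}$ contribution (which is the only source of the small spatial parameter) so that it multiplies $\tau_\nu\omega_{2,n}$ and not the $|\sigma_2^-|^3$ term; this requires applying the interaction estimate only after the ODE update and invoking Lemma \ref{lemma:change-vi} for the geometric crossing at the level of the pre-update state.
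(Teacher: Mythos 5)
Your proposal is correct and follows essentially the same route as the paper: in every case the change of $v_1$ along the minimal characteristic is split into the ODE update of the left trace via \eqref{defn:time-step}, the strength of the newly created 1-wave controlled by \eqref{interaction-estimate-force} (with the $\e_\nu\omega_{1,j}$ contribution appearing only when $y(t_n)=x_j$), and, in the 2-wave cases, the cubic variation of $v_1$ across the 2-shock curve as in Lemma \ref{lemma:change-vi}. The only cosmetic difference is your explicit bookkeeping of $g_{n,j}-g_{n,j-1}$, which is at the same level of precision as the paper's own bound.
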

		\begin{proof}
		We set up a general notations for all the cases as follows. Let $v^l_-$ and $v^r_-$ be the states (corresponding to $u^l_-$ and $u_-^r$) before $t_n$ and $v_+^l,v_+^r$ be the states (corresponding to $u^l_+$ and $u_+^r$)  at $t=t_n+$. Let $\si_1^+,\si_2^+$ be the 1- and 2-wave arsing from the Riemann problem solution to $(v_+^l,v_+^r)$ with middle state $v^m_+$. From \eqref{defn:time-step} we have 
		\begin{equation*}
			u_+^l=u_-^l+\tau_\nu g_\nu(t_n-,y(t_n)-,u_-^l)\mbox{ and }u_+^r=u_-^r+\tau_\nu g_\nu(t_n-,y(t_n)+,u_-^r).
		\end{equation*}
	    In all the cases, we must see that $\si_1^+>0$ and $v_1(t_n+,y_1(t_n+))=v_+^l$. Note that \eqref{change-v_1-t_n-1} follows from \eqref{defn:time-step} and bound on $g_\nu$. To show \eqref{change-v_1-t_n-2} we note in this case, $v_1(t_n-,y_1(t_n-))=v_{1,-}^l$ and $\si_1^->0$. Hence, we get
	   \begin{align*}
	   	\abs{v_1(t_n+,y_1(t_n+))}-	\abs{v_1(t_n-,y_1(t_n-))}&\leq \abs{v^l_{1,+}-v^l_{1,-}}\\
	   	&\leq C_3\tau_\nu \omega_{2,n}+C_3\abs{\si_1^-}\tau_\nu \omega_{2,n}.
	   \end{align*}
       To show \eqref{change-v_1-t_n-3}, we see that $v_1(t_n-,y_1(t_n-))=v_{1,-}^r$. Now, from \eqref{defn:time-step} and the definition of $S_2$ curve we get
       \begin{align*}
       		\abs{v_1(t_n+,y_1(t_n+))}-	\abs{v_1(t_n-,y_1(t_n-))}&\leq  \abs{v^l_{1,+}-v^l_{1,-}}+\abs{\si^1_+}+\abs{v^r_{1,-}-v^l_{1,-}}\\
       	&\leq C_3\tau_\nu \omega_{2,n}+C_3\abs{\si_2^-}\tau_\nu \omega_{2,n}+C_3\abs{\si_2^-}^3.
       \end{align*}
		Next we prove \eqref{change-v_1-t_n-4}. By a similar argument as before we obtain
		\begin{align*}
			&\abs{v_1(t_n+,y_1(t_n+))}-	\abs{v_1(t_n-,y_1(t_n-))}\\
			&\leq \abs{v^l_{1,+}-v^l_{1,-}}\\
			&\leq C_3\tau_\nu \omega_{2,n}+C_3\tau_\nu \omega_{2,n}\abs{\si_1^-}.
		\end{align*}
      To estimate \eqref{change-v_1-t_n-5} note that $v_1(t_n-,y_1(t_n-))=v_{1,-}^r$ and $\abs{\si_1^+}\leq C_3\tau_\nu \omega_{2,n}(\abs{\si_2^-}+\epsilon_\nu\omega_{1,j})$. Therefore,
      \begin{align*}
      	&\abs{v_1(t_n+,y_1(t_n+))}-	\abs{v_1(t_n-,y_1(t_n-))}\\
      	&\leq  \abs{v^l_{1,+}-v^l_{1,-}}+\abs{\si^1_+}+\abs{v^r_{1,-}-v^l_{1,-}}\\
      	&\leq C_3\tau_\nu \omega_{2,n}+C_3\tau_\nu \omega_{2,n}(\abs{\si_2^-}+\epsilon_\nu\omega_{1,j})+C_3\abs{\si_2^-}^3.
      \end{align*}
  This completes the proof of Lemma \ref{lemma:change-vi-1}.
		\end{proof}

	
	We define
	\begin{align}
		\tilde{Q}_1(t)&:=\sum\limits_{x_\al<y_1(t)}\abs{\si_{2,\al}},\\
		\tilde{Q}_2(t)&:=\sum\limits_{x_\al>y_2(t)}\abs{\si_{1,\al}}.
	\end{align}
	\begin{lemma}\label{lemma-tQ}
		Let $i,j\in\{1,2\}$ with $i\neq j$ and $t>0$. The following cases hold true.
		\begin{enumerate}
			\item There is no interaction and $y_i(t)$ does not cross any wave, then $\De \tilde{Q}_i(t)=0$.
			\item There is no interaction and $y_i(t)$ crosses a $j$-wave $\si_j$, then $\De \tilde{Q}_i(t)=-\abs{\si_j}$.
			

			\item There is an interaction between waves $\si^\p,\si^{\p\p}$ (at least one of them is $\leq -\epsilon_\nu$) and $y_i(t)$ does not cross any wave then
			\begin{equation*}\label{estimate-tQ-3}
				\De \tilde{Q}_i(t)\leq C_1\abs{\si^\p\si^{\p\p}}(\abs{\si^\p}+\abs{\si^{\p\p}}).
			\end{equation*}

			\item There is an interaction between waves $\si^\p,\si^{\p\p}$ and $y_i(t)$ crosses a $j$-wave then
			\begin{equation}\label{estimate-tQ-4}
				\De \tilde{Q}_i(t)\leq C_1\abs{\si^\p\si^{\p\p}}(\abs{\si^\p}+\abs{\si^{\p\p}})-\abs{\si_j}.
			\end{equation}

			\item There is an interaction between $j$-waves $\si^\p,\si^{\p\p}$ and $y_i(t)$ crosses the interaction point then
			\begin{equation}\label{estimate-tQ-5}
				\De \tilde{Q}_i(t)\leq -\abs{\si^\p}-\abs{\si^{\p\p}}.
			\end{equation}

	       \item At $t=t_n$, if $y_i(t)$ does not cross any wave at $(t_n,y(t_n))$,
	       \begin{equation}\label{estimate-tQ-6}
	       	\De\tilde{Q}(t_n)\leq C_3\tau_\nu\omega_{2,n} V(t_n-)+C_3 \tau_\nu\omega_{2,n}\sum\limits_{k}\e_\nu\omega_{1,k}.
	       \end{equation}
			
			\item At $t=t_n$, if $y_i(t)$ crosses a 2-wave at $(t_n,y(t_n))$,
			\begin{equation}\label{estimate-tQ-7}
				\De\tilde{Q}(t_n)\leq -\abs{\si_2^-}+C_3\tau_\nu\omega_{2,n} V(t_n-)+C_3 \tau_\nu\omega_{2,n}\sum\limits_{k}\e_\nu\omega_{1,k}.
			\end{equation}

		\end{enumerate}
	\end{lemma}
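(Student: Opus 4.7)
The plan is to analyze each of the seven cases separately by dissecting the jump $\De \tilde{Q}_i(t)$ into two bookkeeping pieces: (a) waves entering or leaving the counted side because $y_i(t)$ itself has moved across them, and (b) the strengths of the $j$-waves on the counted side being modified by an interaction or by the ODE time-step. Cases (1) and (2) are immediate from the definition of $\tilde{Q}_i$. If nothing happens at time $t$, the set of counted waves and their strengths are unchanged, giving $\De \tilde{Q}_i = 0$. If $y_i$ crosses a single $j$-wave $\si_j$ with no interaction present, then by the sign constraint $\la_1 \leq -c_0/2 < 0 < c_0/2 \leq \la_2$ in \descref{H-f}{($\mathcal{H}_f$)} the geometry forces the crossed $j$-wave to move from the counted side of $y_i$ to the uncounted side, yielding $\De \tilde{Q}_i = -|\si_j|$.

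For cases (3) and (4) the interaction point $(t,x_0)$ lies entirely on one side of $y_i$. On the counted side the incoming $j$-wave strengths are replaced by the outgoing $j$-wave strength $\si_j^+$, and the three estimates of Lemma \ref{lemma-1} directly bound the resulting change in the total $j$-wave strength by $C_1|\si'\si''|(|\si'|+|\si''|)$, regardless of whether the interaction is of type 1--1, 1--2, or 2--2. For (4) I would additionally add the contribution $-|\si_j|$ coming from the separately crossed $j$-wave, as in case (2). Case (5) is the combinatorially cleanest: before the interaction both $j$-waves $\si'$ and $\si''$ sit on the counted side of $y_i$ and contribute $|\si'|+|\si''|$ to $\tilde{Q}_i(t-)$; after $y_i$ crosses the interaction point the outgoing $j$-wave $\si_j^+$ lies on the other side of $y_i$ and contributes nothing to $\tilde{Q}_i(t+)$, while the outgoing $k$-wave ($k\neq j$) is not counted by $\tilde{Q}_i$ at all; hence $\De \tilde{Q}_i = -|\si'|-|\si''|$ exactly.

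For the time-step cases (6) and (7) the plan is to apply the estimate \eqref{interaction-estimate-force} twice: once to each pre-existing front lying on the counted side, and once to each maximal constant interval separating two consecutive fronts. For an existing front $\si_\al$ the formula with $x_l\approx x_r$ yields $|\hat{\si}_\al - \si_\al| \leq C_2 \tau_\nu \omega_{2,n}(|\si_\al|+|w_l-w_r|) \leq C\tau_\nu \omega_{2,n}|\si_\al|$, so that summing over all fronts on the counted side produces $C\tau_\nu\omega_{2,n} V(t_n-)$. On a previously constant interval, the piecewise-constant forcing $g_\nu(t_n-,\cdot,u)$ has jumps at each grid node $x_j$ inside the interval, so the ODE update $u\mapsto u + \tau_\nu g_\nu$ creates a new jump at each such $x_j$; applying \eqref{interaction-estimate-force} with $\si = 0$ and $w_l = w_r$ bounds the strength of each newly created wave by $C_2 \tau_\nu \omega_{2,n}\, \e_\nu\, \omega_{1,j}$, and summing over the grid nodes in the counted region gives the term $C\tau_\nu \omega_{2,n} \sum_k \e_\nu \omega_{1,k}$. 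Case (7) then superimposes the $-|\si_2^-|$ contribution from case (2).

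The main obstacle I anticipate is the bookkeeping in (6) and (7): one must identify precisely all newly created $j$-waves produced by the $x$-jumps of the piecewise-constant forcing $g_\nu$ at the grid nodes $\{x_j\}$, apply \eqref{interaction-estimate-force} with the correct choice of $(x_l,x_r,w_l,w_r)$ in each regime (a single front versus an entire constant interval), and ensure that each newly created wave is counted exactly once. The remaining cases reduce to unwinding the definition of $\tilde{Q}_i$ and quoting Lemma \ref{lemma-1}.
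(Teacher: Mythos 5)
Your proposal is correct and follows essentially the same route as the paper: cases (1)--(5) are handled by unwinding the definition of $\tilde{Q}_i$ together with the interaction estimates of Lemma \ref{lemma-1}, and cases (6)--(7) by applying the force-step estimate \eqref{interaction-estimate-force} front by front (and to the newly created waves at the grid nodes), which yields exactly the terms $C\tau_\nu\omega_{2,n}V(t_n-)+C\tau_\nu\omega_{2,n}\sum_k\e_\nu\omega_{1,k}$, plus the $-\abs{\si_2^-}$ from the crossing in case (7). Your bookkeeping for the time-step cases is, if anything, slightly more explicit than the paper's, but it is the same argument.
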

\begin{proof}
	We will prove the last two cases and rest of the cases follow from definition of $\tilde{Q}_i$. First we consider $i=1$. To show \eqref{estimate-tQ-6} note that
	\begin{align*}
			\De\tilde{Q}(t_n)&\leq \sum\limits_{x_\al<y_1(t)}\abs{\si_{2,\al}^--\si_{2,\al}^+}\\
			&\leq C_2\tau_\nu\omega_{2,n}\sum\limits_{x_\al<y_1(t_n)}\abs{\si_{2,\al}^-}+C_2 \tau_\nu\omega_{2,n}\sum\limits_{x_\al<y_1(t_n)}\epsilon_\nu \omega_{1,k}\\
			&\leq C_2\tau_\nu\omega_{2,n}V(t_n-)+C_2 \tau_\nu\omega_{2,n}\sum\limits_{k}\epsilon_\nu \omega_{1,k}.
	\end{align*}
The case $i=2$ follows in the same way. Similarly, we can prove the estimates \eqref{estimate-tQ-7}.
\end{proof}
	
	Let $\mathcal{A}$ be the set of all approaching waves. We consider the total strength of waves and the interaction potential as follows.
	\begin{equation}
		V(v^\e):=\sum\limits_{i,\al}\abs{\si_{i,\al}}\mbox{ and }Q(v^\e):=K\sum\limits_{(\si_{i,\al},\si_{j,\B})\in\mathcal{A}}\abs{\si_{i,\al}\si_{j,\B}}.
	\end{equation}
To take into account the additional changes in total variation due to forcing term we define, 
\begin{equation}
	\hat{Q}(t,v^\e):=Q(v^\e)+R(V(v^\e)+\norm{\omega_{1}^\nu}_{L^1})\tau_\nu\sum\limits_{t_k\geq t}\omega_{2,k}.
\end{equation}
	
	\begin{proposition}\label{proposition-Q}
		Fix $M_1>0$. Suppose that $v^\e$ is well defined for $t\in[0,t_*)$ for some $t_*<T^*$. Assume that 
		\begin{align}
			TV(v^\e(t))&<M_1,\\
      \norm{\omega_2^\nu}_{L^1(0,T^*)}&\leq 1/(4C_3),\\
			\norm{v^\e(t_*-)}_{L^\f}&\leq 1/(4C_3M_1).
		\end{align} 
	Furthermore, we choose $K$ and $R$ in definition of $\hat{Q}$ such that 
		\begin{align}
			K&\geq 8RC_1\norm{v(t_*-)}_{L^\f}\norm{\omega_2^\nu}_{L^1(t_*,T^*)},\\
			R&\geq 4KC_3(V(t_*-)+2\norm{\omega_1^\nu}_{L^1}).
		\end{align}
	Then the following holds true.
		\begin{enumerate}
			\item  If two waves $\si^\p,\si^{\p\p}$ (of $i,j$-th family for $i,j\in\{1,2\}$) interact at $t_*\neq t_n$ for all $n\geq1$, then $v^\e$ can be defined beyond $t$ and 
			\begin{equation}
				\De \hat{Q}(v^\e(t_*))\leq -\frac{K}{4}\abs{\si^\p\si^{\p\p}}.
			\end{equation}
			\item For $t_*=t_n$ for some $n\in\N$, 
			\begin{equation}
				\De \hat{Q}(v^\e(t_*))\leq -\frac{R}{2}(V(t_n-)+\norm{\omega_1^\nu}_{L^1})\tau_\nu\omega_{2,n}.
			\end{equation}
		\end{enumerate} 
		
	\end{proposition}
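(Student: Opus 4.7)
The plan is to split by cases, tracking each summand of $\hat{Q}$ via the Glimm-type interaction bound in Lemma 3.1 and the forcing perturbation estimate \eqref{interaction-estimate-force}. A structural observation is decisive: the map $t\mapsto \tau_\nu \sum_{t_k\geq t}\omega_{2,k}$ is constant on each open interval $(t_n,t_{n+1})$ and drops by exactly $\tau_\nu\omega_{2,n}$ at $t=t_n$. This discrete drop produces the dissipation in case (2), while in case (1) the $R$-tail responds only through $\Delta V$ and contributes an error of order $R\cdot\Delta V\cdot \tau_\nu\norm{\omega_2^\nu}_{L^1(t_*,T^*)}$ that must be absorbed into the classical quadratic Glimm decrease.

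For case (1), at an interaction time $t_*\neq t_n$, Lemma 3.1 gives $\Delta V\leq C_1\abs{\si'\si''}(\abs{\si'}+\abs{\si''})$ and the standard Glimm identity $\Delta Q\leq -\abs{\si'\si''}+C_1 V(t_*-)\abs{\si'\si''}(\abs{\si'}+\abs{\si''})$. Using $V(t_*-)\leq M_1$ and $(\abs{\si'}+\abs{\si''})\leq 2\norm{v^\e(t_*-)}_{L^\f}\leq \frac{1}{2C_3M_1}$, one obtains $K\Delta Q\leq -\frac{3K}{4}\abs{\si'\si''}$. The $R$-tail contributes at most $2RC_1\norm{v^\e(t_*-)}_{L^\f}\norm{\omega_2^\nu}_{L^1(t_*,T^*)}\abs{\si'\si''}$, which by the hypothesis $K\geq 8RC_1\norm{v^\e(t_*-)}_{L^\f}\norm{\omega_2^\nu}_{L^1(t_*,T^*)}$ is at most $\frac{K}{4}\abs{\si'\si''}$. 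Summing yields the claimed $\Delta\hat{Q}\leq -\frac{K}{4}\abs{\si'\si''}$.

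For case (2), at $t_*=t_n$, I would apply \eqref{interaction-estimate-force} at every wavefront: for a front at $(t_n,x_\alpha)$ the relevant jump $\abs{w_l-w_r}$ is the strength $\abs{\si_\alpha}$ and $\norm{\omega_1^\nu}_{L^1(x_{j_\alpha},x_{j_\alpha+1})}=\e_\nu\omega_{1,j_\alpha}$, so summation yields $\Delta V(t_n)\leq C_3\tau_\nu\omega_{2,n}(V(t_n-)+\norm{\omega_1^\nu}_{L^1})$ and hence $\Delta Q\leq KV(t_n-)\Delta V\leq KC_3 V(t_n-)\tau_\nu\omega_{2,n}(V(t_n-)+\norm{\omega_1^\nu}_{L^1})$. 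The $R$-tail contributes
\[
-R(V(t_n+)+\norm{\omega_1^\nu}_{L^1})\tau_\nu\omega_{2,n}+R\tau_\nu(\Delta V)\sum_{t_k>t_n}\omega_{2,k},
\]
whose second summand is $\mathcal{O}(\tau_\nu^2\omega_{2,n}\norm{\omega_2^\nu}_{L^1})$ and absorbable. Combining and invoking $R\geq 4KC_3(V(t_*-)+2\norm{\omega_1^\nu}_{L^1})$ gives $\Delta\hat{Q}(t_n)\leq -\frac{R}{2}(V(t_n-)+\norm{\omega_1^\nu}_{L^1})\tau_\nu\omega_{2,n}$.

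The main obstacle I foresee is the bookkeeping in case (2): one must apply \eqref{interaction-estimate-force} consistently to every front, including the newly created cross-family wave of size comparable to $\tau_\nu\omega_{2,n}(\abs{\si}+\e_\nu\omega_{1,j})$ produced at each old jump by the ODE step, and then recheck which pairs remain approaching afterwards. Provided $\tau_\nu\omega_{2,n}$ is small, no genuine sign-change of wave strength occurs and the approaching classification is preserved up to a correction absorbed by the extra $\norm{\omega_1^\nu}_{L^1}$ factor built into $\hat{Q}$, so that $Q(t_n+)-Q(t_n-)$ stays linearly controlled. This combinatorial-analytic step is where the most care is needed.
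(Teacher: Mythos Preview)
Your proposal is correct and follows essentially the same route as the paper: split into the two cases, use Lemma~3.1 for the quadratic Glimm drop at genuine interactions, use \eqref{interaction-estimate-force} for the forcing step, and balance the $R$-tail against these via the stated constraints on $K$ and $R$. The only place your write-up is slightly looser than the paper is the bound $\Delta Q(t_n)\le K\,V(t_n-)\,\Delta V$: the paper keeps an additional term $KC_3\bigl(\tau_\nu\omega_{2,n}\norm{\omega_1^\nu}_{L^1}\bigr)^2$ coming from pairs of \emph{newly created} waves at grid points $x_j$ (new--new interactions, which $V(t_n-)\Delta V$ does not see), and this is precisely the combinatorial issue you flag in your last paragraph---once added, the rest of your argument goes through verbatim.
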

	\begin{proof}
		We first consider $t_*\in (t_n,t_{n+1})$. By our construction two waves $\si^{\p},\si^{\p\p}$ meet at some point $\hat{x}$. Then from Lemma \ref{lemma-1}, it follows, 
		\begin{equation}
			\De V(t_*)\leq C_1\abs{\si^\p\si^{\p\p}}(\abs{\si^\p}+\abs{\si^{\p\p}}).
		\end{equation}
	   Then we get
	   \begin{align*}
	   	\De Q(v^\e(t_*))&\leq -K\abs{\si^\p\si^{\p\p}}+KC_1TV(v^\e(t_*-))\abs{\si^\p\si^{\p\p}}(\abs{\si^\p}+\abs{\si^{\p\p}})\\
	   	&\leq \abs{\si^\p\si^{\p\p}}\left(-K+2KC_3M_1\norm{v^\e(t_*-)}_{L^\f}\right)\\
	   	&\leq -\frac{K}{2}\abs{\si^\p\si^{\p\p}},
	   \end{align*}
     provided $\norm{v^\e(t_*-)}_{L^\f}\leq1/(4M_1C_3)$. Subsequently, 
     \begin{align*}
     	\De \hat{Q}(t_*,v^\e(t_*))&\leq -\frac{K}{2}\abs{\si^\p\si^{\p\p}}+R\abs{\De V(v^\e(t_*))}\tau_\nu\sum\limits_{t_k\geq t_*}\omega_{2,k}\\
     	&\leq -\frac{K}{2}\abs{\si^\p\si^{\p\p}}+RC_1\abs{\si^\p\si^{\p\p}}(\abs{\si^\p}+\abs{\si^{\p\p}})\tau_\nu\sum\limits_{t_k\geq t_*}\omega_{2,k}\\
     	&\leq -\frac{K}{4}\abs{\si^\p\si^{\p\p}},
     \end{align*}
 for $K\geq 8RC_1\norm{v(t_*-)}_{L^\f}\norm{\omega_2^\nu}_{L^1(t_*,T^*)}$. Now, we consider the change in $V, Q$ and $\hat{Q}$ when $t_*=t_n$. From definition, we have
 \begin{align}
 	\De V(t_n)&\leq C_3 V(t_n-)\tau_\nu\omega_{2,n}+C_3\norm{\omega_1^\nu}_{L^1(\R)}\tau_\nu\omega_{2,n},\\
 	\De Q(v^\e(t_n))&\leq  KC_3V(t_n-)\tau_\nu\omega_{2,n}\left(V(t_n-)+\norm{\omega_1^\nu}_{L^1}\right)+KC_3\left(\tau_\nu\omega_{2,n}\norm{\omega_{1}^\nu}_{L^1}\right)^2.
 \end{align}
Then we get
\begin{align*}
	&\De \hat{Q}(t_n,v^\e(t_n))\\
	&\leq KC_3V(t_n-)\tau_\nu\omega_{2,n}[V(t_n-)+\norm{\omega_1^\nu}_{L^1}]+KC_3\left(\tau_\nu\omega_{2,n}\norm{\omega_{1}^\nu}_{L^1}\right)^2\\
	&+R\abs{\De V(t_n)}\tau_\nu\sum\limits_{k\geq n+1}\omega_{2,k}-R(V(t_n-)+\norm{\omega_1^\nu}_{L^1})\tau_\nu\omega_{2,n}\\
	&\leq KC_3V(t_n-)\tau_\nu\omega_{2,n}[V(t_n-)+\norm{\omega_1^\nu}_{L^1}]+KC_3\left(\tau_\nu\omega_{2,n}\norm{\omega_{1}^\nu}_{L^1}\right)^2\\
	&+RC_3 V(t_n-)\tau_\nu\omega_{2,n}\tau_\nu\sum\limits_{k\geq n+1}\omega_{2,k}+RC_3\norm{\omega_1^\nu}_{L^1(\R)}\tau_\nu\omega_{2,n}\tau_\nu\sum\limits_{k\geq n+1}\omega_{2,k}\\
	&-R(V(t_n-)+\norm{\omega_1^\nu}_{L^1})\tau_\nu\omega_{2,n}\\
	&=RV(t_n-)\tau_\nu\omega_{2,n}\left[\frac{KC_3}{R} \left(V(t_n-)+\norm{\omega_1^\nu}_{L^1}\right)+C_3\norm{\omega_2^\nu}_{L^1(t_n,T^*)}-1\right]\\
	&+R\norm{\omega_1^\nu}_{L^1}\tau_\nu\omega_{2,n}\left[\frac{KC_3}{R}\tau_\nu\omega_{2,n}\norm{\omega_1^\nu}_{L^1}+C_3\norm{\omega_2^\nu}_{L^1(t_n,T^*)}-1\right]\\
	&\leq -\frac{R}{2}(V(t_n-)+\norm{\omega_1^\nu}_{L^1})\tau_\nu\omega_{2,n}
\end{align*}
provided $C_3\norm{\omega_2^\nu}_{L^1(t_n,T^*)}\leq 1/4$ and
\begin{equation}
	\frac{KC_3}{R}(V(t_n-)+2\norm{\omega_1^\nu}_{L^1})\leq \frac{1}{4}.
\end{equation}
This completes the proof of Proposition \ref{proposition-Q}. 
 \end{proof}
We define
	\begin{align}
		\Upsilon(v^\e(t))&=V(v^\e(t))+\hat{Q}(v^\e(t)),\\
		\Theta(v^\e(t))&=\left(\abs{v_i^\e(t,y_i(t))}+\norm{\bar{v}^\e}_{L^\f}+A_0W(t)\right)e^{\tilde{A}\tilde{Q}_i(t)+A\hat{Q}(v^\e(t))},
	\end{align}
	for some $K,A_0,\tilde{A},A>0$ which will be chosen later and $W(t)$ is defined as follows
	\begin{equation}
		W(t):=\tau_\nu\sum\limits_{t_n\geq t}\omega_{2,n}.
	\end{equation}
	
	\begin{lemma}\label{lemma-decreasing}
		Fix $M_1,M_2>0$. Consider that an initial data $\bar{v}^\e$ is satisfying $\norm{\bar{v}^\e}_{L^\f}<\eta$ and the force $g$ is verifying $\norm{\omega_1}_{L^1(\R)}+\norm{\omega_2}_{L^1(0,T^*)}<\eta$. We assume that the corresponding approximate solution $v^\e$ is defined till time $t>0$. There exists $\eta>0$ and $\tilde{A},A,A_0,K,R$ such that if $TV(v^\e(t-))<M_1$ and $\norm{v^\e(t-)}_{L^\f}<M_2\norm{\bar{v}^\e}_{L^\f}$, then 
		\begin{equation}
			\De \Upsilon(v^\e(t))\leq 0,\mbox{ and }\De\Theta_i(v^\e(t))\leq 0\mbox{ for }i=1,2.
		\end{equation}
	\end{lemma}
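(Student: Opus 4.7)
The plan is to verify $\Delta\Upsilon(t)\le 0$ and $\Delta\Theta_i(t)\le 0$ by a case analysis over the events that can occur at time $t$: a binary interaction at $t\neq t_n$, a crossing of the minimal backward characteristic $y_i$ through a wave (possibly at an interaction point), or an ODE step at $t=t_n$ (possibly combined with such a crossing). For fixed $M_1,M_2$ the five constants $K,R,\tilde A,A,A_0$ will be tuned successively so that the smallness hypotheses on $\|\bar v^\epsilon\|_{L^\infty}$ and on $\|\omega_1\|_{L^1}+\|\omega_2\|_{L^1(0,T^*)}$ suffice to close every inequality.

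The bound $\Delta\Upsilon\le 0$ follows almost directly from Proposition~\ref{proposition-Q}. At an interaction $t_\ast\neq t_n$, Lemma~\ref{lemma-1} gives $\Delta V\le 2C_1 M_1|\sigma'\sigma''|$ whereas $\Delta\hat Q\le -(K/4)|\sigma'\sigma''|$, so $K\ge 8C_1 M_1$ is enough. At $t_\ast=t_n$, the estimate $\Delta V\le C_3(V(t_n-)+\|\omega_1^\nu\|_{L^1})\tau_\nu\omega_{2,n}$ is dominated by $\Delta\hat Q\le -(R/2)(V(t_n-)+\|\omega_1^\nu\|_{L^1})\tau_\nu\omega_{2,n}$ once $R\ge 2C_3$.

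For the exponential functional $\Theta_i$, I will show that the ratio
\[
\frac{\Theta_i(t+)}{\Theta_i(t-)}=\frac{|v_i|_{t+}+\|\bar v^\epsilon\|_{L^\infty}+A_0W(t+)}{|v_i|_{t-}+\|\bar v^\epsilon\|_{L^\infty}+A_0W(t-)}\,\exp\!\bigl(\tilde A\,\Delta\tilde Q_i+A\,\Delta\hat Q\bigr)
\]
is $\le 1$ in each case. For a pure interaction not crossed by $y_i$ the linear prefactor is $1$ and the exponent is non-positive as soon as $AK\ge 8\tilde A C_1 M_1$, using Lemma~\ref{lemma-tQ}(3). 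When $y_i$ crosses a single $j$-wave of strength $\sigma_j$, Lemma~\ref{lemma:change-vi} and Lemma~\ref{lemma-tQ}(2) give $\Delta|v_i|\le C_3|\sigma_j|^3$ and $\Delta\tilde Q_i=-|\sigma_j|$. Since every front connects states at distance $|\sigma_j|$ in Riemann coordinates (up to cubic corrections on the shock curves), one has $|\sigma_j|\le C\|v^\epsilon\|_{L^\infty}\le CM_2\eta$, whence $C_3|\sigma_j|^3/\|\bar v^\epsilon\|_{L^\infty}\le C_3 C^2 M_2^2\eta\,|\sigma_j|$; the elementary inequality $(1+a|\sigma_j|)e^{-\tilde A|\sigma_j|}\le 1$, which holds whenever $\tilde A\ge a$, closes the case provided $\tilde A\ge C_3 C^2 M_2^2\eta$. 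The case of $y_i$ passing through the interaction of two $j$-waves reduces to this one via Lemma~\ref{lemma-tQ}(5) and the cubic bound in Lemma~\ref{lemma:change-vi}(2).

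The delicate step is $t=t_n$. Now the prefactor changes through $\Delta W=-\tau_\nu\omega_{2,n}$, through the increments of $|v_i|$ in Lemma~\ref{lemma:change-vi-1}, and possibly through a cubic $|\sigma_2^-|^3$ term if $y_1$ crosses a 2-front just perturbed by the ODE step. The linear-in-$\tau_\nu\omega_{2,n}$ contributions, of the order $C_3\tau_\nu\omega_{2,n}(1+|\sigma^-|+\epsilon_\nu\omega_{1,j})$, are absorbed by $A_0\Delta W=-A_0\tau_\nu\omega_{2,n}$ once $A_0$ dominates a fixed multiple of $C_3(1+M_1+\|\omega_1^\nu\|_{L^\infty})$; the cubic contribution is handled by the $-|\sigma_2^-|$ term in $\Delta\tilde Q_i$ from Lemma~\ref{lemma-tQ}(7), exactly as in the wave-crossing case; and the residual positive part of $\Delta\tilde Q_i$ of the form $C_3(V(t_n-)+\|\omega_1^\nu\|_{L^1})\tau_\nu\omega_{2,n}$ is killed by $A\Delta\hat Q$ provided $AR\ge 2\tilde A C_3$. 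The main obstacle I anticipate is purely the bookkeeping: one must choose $R$ large, then $A_0$ large, then $\tilde A$, then $K$ with $AK\gg\tilde A C_1 M_1$, and finally $A$ with $AR\gg\tilde A C_3$, after which $\eta$ is shrunk so that $\|\omega_2^\nu\|_{L^1}\le 1/(4C_3)$ and $M_2\eta$ lies in the admissible range of Proposition~\ref{proposition-Q}.
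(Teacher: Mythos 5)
Your proposal is correct and takes essentially the same route as the paper: the same case analysis (binary interactions at $t\neq t_n$, crossings of the minimal backward characteristic $y_i$, and the ODE step at $t=t_n$), the same inputs (Lemma \ref{lemma-1}, Lemmas \ref{lemma:change-vi}--\ref{lemma:change-vi-1}, Lemma \ref{lemma-tQ} and Proposition \ref{proposition-Q}), and the same mechanism of absorbing the increase of the prefactor of $\Theta_i$ into the exponential decrease of $\tilde{Q}_i$ and $\hat{Q}$, followed by tuning $K,R,\tilde{A},A,A_0$ and shrinking $\eta$. The only differences are cosmetic and harmless: you bound wave strengths by $M_1$ where the paper uses $\norm{v^\e}_{L^\f}$ (so your constants are fixed and large rather than proportional to $\norm{\bar{v}^\e}_{L^\f}$), and the constant $A$ should simply be chosen last so that $AK\geq 8\tilde{A}C_1M_1$ and $AR\geq 2\tilde{A}C_3$ hold simultaneously.
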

	\begin{proof}
			Consider a situation when $\si^\p,\si^{\p\p}$ interact at some point $(x_*,t_*)$ for $t_*\in(t_n,t_{n+1})$.
		\begin{align}
			\De \Upsilon(t_*,v^\e(t_*))&\leq C_3\abs{\si^\p\si^{\p\p}}(\abs{\si^\p}+\abs{\si^{\p\p}})-\frac{K}{4}\abs{\si^\p\si^{\p\p}}\\
			&\leq 0
		\end{align}
			provided $K\geq 8C_3\norm{v(t_*-)}_{L^\f}$. At $t=t_n$ we obtain
			\begin{align*}
					\De \Upsilon(t_n,v^\e(t_n))
					&\leq \De V(t_n)+\De \hat{Q}(t_,v^\e(t_n))\\
					&\leq C_3 V(t_n-)\tau_\nu\omega_{2,n}+C_3\norm{\omega_1^\nu}_{L^1(\R)}\tau_\nu\omega_{2,n}\\
					&-\frac{R}{2}\left(V(t_n-)+\norm{\omega_1^\nu}_{L^1}\right)\tau_\nu\omega_{2,n}\\
					&\leq 0,
			\end{align*}
		provided $R\geq 2C_3$. 
		
		Now we focus on change of $\Theta_i$. 
	\begin{enumerate}
		\item If there is no interaction and $y_i(t)$ does not cross any wave, then $\De \tilde{Q}_i(t)=0$, $\De \Theta_i=0$. 
		
		\item Consider the situation there is no interaction and $y_i(t)$ crosses a $j$-wave $\si_j$, then $\De \tilde{Q}_i(t)=-\abs{\si_j}$. From Lemma \ref{lemma:change-vi}, we can see that $\De v_i^\e(t,y_i(t))\leq C_3\norm{v(t-)}^2_{L^\f}\abs{\si_j}$. This implies $\De \Theta_i\leq 0$ provided $\tilde{A}\geq C_3\norm{v(t-)}_{L^\f}^2$. 
		
%
		
		\item Now we consider the situation when there is an interaction between waves $\si^\p,\si^{\p\p}$ and $y_i(t)$ does not cross any wave. From \eqref{estimate-tQ-3} and Proposition \ref{proposition-Q} we get then
		\begin{equation*}
			\De \tilde{Q}_i(t)\leq C_1\abs{\si^\p\si^{\p\p}}(\abs{\si^\p}+\abs{\si^{\p\p}})\mbox{ and }\De \hat{Q}\leq -\frac{K}{4}\abs{\si^\p\si^{\p\p}}.
		\end{equation*} 
	  Since $\De v_i^\e(t,y_i(t))=0$, we get $\De \Theta_i\leq 0$ provided $KA\geq 8C_1\norm{v(t-)}_{L^\f}\tilde{A}$. 
	  \item Now if there is an interaction between waves $\si^\p,\si^{\p\p}$ and $y_i(t)$ crosses a $j$-wave then
	  \begin{align*}
	  	\De \tilde{Q}_i(t) &\leq C_1\abs{\si^\p\si^{\p\p}}(\abs{\si^\p}+\abs{\si^{\p\p}})-\abs{\si_j},\\
	  	\De v_i^\e(t,y_i(t))&\leq C_3\norm{v(t-)}^2_{L^\f}\abs{\si_j},\\
	  	\De \hat{Q}&\leq -\frac{K}{4}\abs{\si^\p\si^{\p\p}}.
	  \end{align*} 
   Hence, we get $\De \Theta_i\leq 0$ provided $KA\geq 8C_1\norm{v(t-)}_{L^\f}\tilde{A}$ and $\tilde{A}\geq C_3\norm{v(t-)}_{L^\f}^2$. 
   
   \item We consider that there is an interaction between waves $\si^\p,\si^{\p\p}$ and $y_i(t)$ crosses the interaction point then
   \begin{equation*}
   		\De v_i^\e(t,y_i(t))\leq 4C_3\norm{v(t-)}^2_{L^\f}(\abs{\si^\p}+\abs{\si^{\p\p}})\mbox{ and } \De \tilde{Q}_i(t)\leq -\abs{\si^\p}-\abs{\si^{\p\p}}.
   \end{equation*} 
  Hence, we get $\De \Theta_i\leq 0$ provided $\tilde{A}\geq 4C_3\norm{v(t-)}_{L^\f}^2$.
	\item Now we calculate the change at $t=t_n$, by using the estimates \eqref{estimate-tQ-6} and Lemma \ref{lemma:change-vi} we obtain
	\begin{align*}
			\De\tilde{Q}(t_n)&\leq C_3\tau_\nu\omega_{2,n} (V(t_n-)+\norm{\omega_1^\nu}_{L^1}),\\
			\De v(t_n,y_i(t_n))&\leq C_3\tau_\nu\omega_{2,n}.
	\end{align*}
 Furthermore, from the definition of $W$ and Proposition \ref{proposition-Q} we have
 \begin{equation*}
 	\De W(t_n)=-\tau_\nu\omega_\nu(t_n) \mbox{ and }\De \hat{Q}(t_n)\leq -\frac{R}{2}(V(t_n-)+\norm{\omega_1^\nu}_{L^1})\tau_\nu\omega_{2,n}.
 \end{equation*}
 Therefore, we get $\De \Theta_i\leq 0$ provided $A_0\geq C_3$ and $R\geq 2C_3$.
\end{enumerate}
From all the above cases we need the following conditions to be held,
\begin{align*}
	R&\geq 4KC_3(V(t_n-)+\tau_\nu\omega_{2,n}\norm{\omega_1^\nu}_{L^1}),\\
	K&\geq 8RC_1\norm{v(t_*-)}_{L^\f}\norm{\omega_2}_{L^1(t_*,T^*)},\\
	K&\geq 8C_3\norm{v(t_*-)}_{L^\f},\\
	KA&\geq 8C_1\norm{v(t-)}_{L^\f}\tilde{A},\\
	\tilde{A}&\geq 4C_3\norm{v(t-)}_{L^\f}^2,\\
	R&\geq 2C_3,\,\, A_0\geq C_3.
\end{align*} 
We set 
\begin{equation*}
	R=4C_3,\,\,A_0=2C_3,\,\, \tilde{A}=4C_3M_1^2\norm{\bar{v}}_{L^\f}^2,\,\,K=8C_3M_1\norm{\bar{v}}_{L^\f},\,\,A=4C_3M_1^2\norm{\bar{v}}_{L^\f}^2.
\end{equation*}
They satisfy the required inequalities provided 
\begin{align*}
	8C_3\norm{\omega_2}_{L^1(0,T^*)}&\leq1,\\
8C_3M_1\norm{\bar{v}}_{L^\f}\left[M_1+\norm{\omega_2}_{L^1(0,T^*)}\norm{\omega_1}_{L^1(\R)}\right]&\leq 1/2. 
\end{align*}
This completes the proof of Lemma \ref{lemma-decreasing}. 
\end{proof}
\begin{remark}
	\begin{equation*}
		R=4C_3,\,\,A_0=2C_3,\,\, \tilde{A}=4C_3M_1^2\norm{\bar{v}}_{L^\f}^2,\,\,K=8C_3M_1\norm{\bar{v}}_{L^\f},\,\,A=4C_3M_1^2\norm{\bar{v}}_{L^\f}^2.
	\end{equation*}
\end{remark}
	\begin{proposition}
		Let $M_1>0$ as in Lemma \ref{lemma-decreasing}. There exists $B_1,B_2>0$ and small constants $\eta_1>0,\e_0>0$ such that for $\bar{K}<M_1/B_1$, $\eta<\eta_1,\e<\e_0$ the following holds true. If the initial data $\bar{v}^\e\in \mathcal{D}(\eta,\bar{K})$, $\norm{\omega_1}_{L^1(\R)}+\norm{\omega_2}_{L^1(0,T^*)}<\eta$ and the approximate solution $v^\e$ is defined in $[0,T]\times \R$ for some $0<T<T^*$, then the following holds true,
		\begin{equation}
			TV(v^\e(t))\leq B_1\bar{K}\mbox{ and }\norm{v^\e(t)}_{L^\f}\leq B_2\eta.
		\end{equation}
	\end{proposition}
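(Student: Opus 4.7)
The plan is a continuous-induction (bootstrap) argument anchored on the monotone functionals $\Upsilon$ and $\Theta_i$ produced by Lemma \ref{lemma-decreasing}. I would set
\[
T_1 := \sup\{t\in[0,T] : TV(v^\e(s))<M_1 \text{ and } \norm{v^\e(s)}_{L^\f}<M_2\norm{\bar v^\e}_{L^\f} \text{ for all } s\leq t\},
\]
with $M_1, M_2$ as in Lemma \ref{lemma-decreasing}. I would then assume for contradiction that $T_1<T$ and aim to improve both defining inequalities strictly on $[0,T_1]$, so that since $V$ and $\norm{v^\e}_{L^\f}$ are constant between interaction points and time-grid points, the inequalities would persist past $T_1$, contradicting maximality.

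On $[0,T_1]$, Lemma \ref{lemma-decreasing} gives that $\Upsilon$ is non-increasing. Hence
\[
V(v^\e(t)) \leq \Upsilon(v^\e(t)) \leq \Upsilon(v^\e(0)) \leq \bar K + K\bar K^2 + R(\bar K + \norm{\omega_1}_{L^1})\norm{\omega_2}_{L^1},
\]
where $K, R$ are the values pinned down by Lemma \ref{lemma-decreasing} in terms of $M_1$ and $\norm{\bar v^\e}_{L^\f}$. For $\bar K \leq M_1/B_1$ and $\eta$ small, the right-hand side is dominated by $B_1 \bar K/2$, yielding the strictly improved TV bound.

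For the $L^\f$ control I would fix any $(t_0,X)\in[0,T_1]\times\R$ and $i\in\{1,2\}$, construct the minimal backward $i$-characteristic $y_i(\cdot)$ arriving at $(t_0,X)$, and use monotonicity of $\Theta_i$ along that characteristic to get
\[
\abs{v_i^\e(t_0,X)} \leq \Theta_i(v^\e(t_0)) \leq \Theta_i(v^\e(0)) \leq \lbr 2\norm{\bar v^\e}_{L^\f} + A_0 W(0)\rbr \exp\lbr \tilde A \tilde Q_i(0) + A\hat Q(v^\e(0))\rbr.
\]
Since $W(0)\leq\norm{\omega_2}_{L^1}\leq\eta$, $\tilde Q_i(0)\leq TV(\bar v^\e)\leq\bar K$ and $\hat Q(v^\e(0))$ is quadratically small in $(\bar K,\eta)$, the exponential is uniformly bounded and the whole expression is $\leq C\eta$ for some $C=C(M_1,A_0,\tilde A,A)$ independent of $X,i$. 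Taking the supremum over $X$ and $i$ and choosing $B_2 \geq 2C$ yields the strictly improved $L^\f$ bound.

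The main technical obstacle is the interdependence of constants: Lemma \ref{lemma-decreasing} fixes $K,R,A_0,\tilde A,A$ in terms of $M_1$ and $\norm{\bar v^\e}_{L^\f}$, so I have to order the choices carefully — first $M_1, M_2$, then $K,R,A_0,\tilde A,A$, then $B_1, B_2$, and finally $\eta_1, \e_0$ — so that the smallness conditions appearing at the end of the proof of Lemma \ref{lemma-decreasing} (namely $8C_3\norm{\omega_2}_{L^1}\leq 1$ and $8C_3M_1\norm{\bar v}_{L^\f}[M_1+\norm{\omega_2}_{L^1}\norm{\omega_1}_{L^1}]\leq 1/2$) hold uniformly throughout $[0,T_1]$. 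Once this bookkeeping is carried out, the strict improvement contradicts $T_1<T$, so $T_1 = T$ and the claimed bounds follow.
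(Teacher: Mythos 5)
Your proposal is correct and follows essentially the same route as the paper: the paper's proof consists precisely of the two monotonicity chains $\abs{v_i^\e(t,x)}\leq\Theta_i(v^\e(t))\leq\Theta_i(v^\e(0))$ and $TV(v^\e(t))\leq\Upsilon(v^\e(t))\leq\Upsilon(v^\e(0))$, with the constants $K,R,A_0,\tilde{A},A$ fixed in Lemma \ref{lemma-decreasing} plugged in to bound the initial values by $B_2\eta$ and $B_1\bar{K}$. Your explicit continuous-induction wrapper only formalizes what the paper leaves implicit (that the hypotheses of Lemma \ref{lemma-decreasing} persist up to time $T$); the one bookkeeping point to fix is that the $L^\f$ threshold in your bootstrap should be stated as $B_2\eta$ (what you actually recover, since $\Theta_i(v^\e(0))$ contains the term $A_0W(0)\leq A_0\eta$) rather than $M_2\norm{\bar{v}^\e}_{L^\f}$, which may be much smaller than $\eta$ and hence not reachable by the improved bound.
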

	\begin{proof}
		To prove the $L^\f$ estimate we use Lemma \ref{lemma-decreasing} and get
		\begin{align*}
			\abs{v^\e(t,x)}&\leq \Theta_i(v^\e(t))\\
			&\leq \Theta_i(v^\e(0))\\
			&\leq (2+2C_3)\eta e^{4C_3M_1^2\eta^2(\tilde{Q}(0)+\hat{Q}(0))}\\
			&\leq (2+2C_3)\eta e^{4C_3M_1^2\eta^2\bar{K}(1+8C_3M_1\eta \bar{K}+4C_3\eta)}\\
			&\leq B_2\eta.
		\end{align*}
		Similarly, again by using Lemma \ref{lemma-decreasing}, we obtain
		\begin{align*}
			TV(v^\e(t))&\leq \Upsilon(v^\e(t))\\
			&\leq \Upsilon(v^\e(0))\\
			&\leq \bar{K}(1+8C_3M_1\eta \bar{K}+4C_3\eta)\\
			&\leq 2\bar{K}.
		\end{align*}
	\end{proof}
\begin{proposition}[Estimation of rarefaction strengths, \cite{Bressan-book}]\label{prop:rarefaction}
	Let $v^\e$ be the $\e$-approximate front tracking solution. Then there exists $C>0$ such that the following holds
	\begin{equation}
		(\si)_+\leq C\e\mbox{ for any front }\si.
	\end{equation}
\end{proposition}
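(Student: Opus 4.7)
The plan is to follow every positive front from its birth and bound its strength by tracing how it is created and how it can grow at subsequent interactions or at the ODE time-steps $t = t_n$. Positive fronts in $v^\e$ have exactly three possible origins: (i) at $t=0$ from the partition of the initial Riemann problems; (ii) at $t = t_n$ from the ODE correction step \eqref{defn:time-step} together with the partitioning of any freshly created rarefaction fan; (iii) at an interaction point at $t \neq t_n$ as one of the outgoing waves of two colliding fronts. At creation, in cases (i) and (ii), the partition \eqref{def:rarefaction-partition} guarantees strength at most $\e_\nu$. In case (iii), Lemma \ref{lemma-1} bounds a newly created outgoing wave of the opposite family by $C_1 |\si'\si''|(|\si'|+|\si''|)$, which is $\mathcal{O}(\e_\nu^2)$ as soon as at least one of the incoming fronts is already of order $\e_\nu$, and in any case is absorbed by the argument below.

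Next, I would analyze how an existing positive $i$-front $\si > 0$ evolves. If $\si$ meets another front $\hat\si$ at $t \neq t_n$, Lemma \ref{lemma-1} gives an outgoing $i$-front $\si^+$ with $|\si^+ - \si| \leq C_1|\si\hat\si|(|\si|+|\hat\si|) \leq C_1 V(v^\e) \cdot |\si\hat\si|$. Thanks to the convention adopted in the construction (positive $i$-fronts are not re-partitioned when they survive an interaction), the positive front continues as a single wave. If instead $\si$ crosses a time-step $t_n$, the estimate \eqref{interaction-estimate-force} gives $|\si^+ - \si| \leq C_2 \tau_\nu \omega_{2,n}(|\si| + \text{local variation of } g)$.

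Now I would iterate. Let $t \mapsto \si(t)$ denote the strength of the positive front along its worldline. The two sources of multiplicative growth are summable:
\begin{align*}
\sum_{\text{interactions}} C_1 V(v^\e)\,|\hat\si| &\leq C_1 M_1 \cdot \Bigl(\tfrac{4}{K}\hat Q(0)\Bigr)^{1/2} \cdot \mathcal{O}(1), \\
\sum_n C_2 \tau_\nu \omega_{2,n} &\leq C_2 \, \|\omega_2^\nu\|_{L^1(0,T^*)}.
\end{align*}
By Proposition \ref{proposition-Q}, the total interaction quantity $\sum_{\text{int.}} |\si'\si''|$ is controlled by the decrease of $\hat Q$, and both $TV(v^\e)$ and $\|\omega_2\|_{L^1}$ are small by the assumptions of Lemma \ref{lemma-decreasing}. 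Therefore a Gronwall-type argument gives $\si(t) \leq \si(0_{\text{creation}}) \cdot \exp(C\eta) \leq C\e$.

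The main obstacle is the book-keeping for secondary fronts created at interactions, namely the $2$-front produced when two $1$-fronts collide (and symmetrically). Such a front starts with strength $\leq C_1 |\si'\si''|(|\si'|+|\si''|)$; a priori this is not immediately bounded by $C\e$ unless one of the colliding fronts is already $\mathcal{O}(\e)$. This is handled by induction on the generation order of fronts: a first-generation rarefaction is $\leq \e$ by construction, and any newly born front of generation $k+1$ is bounded by $C_1 (\text{gen.\ }k \text{ bound})^2 \cdot (\text{TV})$, so the $\mathcal{O}(\e)$ bound is inherited. Combining these ingredients with the total variation bound from Lemma \ref{lemma-decreasing} yields the stated estimate $(\si)_+ \leq C\e$.
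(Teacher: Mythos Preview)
Your plan is essentially the argument the paper has in mind: the paper omits the proof and simply says it follows \cite[Section~7.3]{Bressan-book} with $\hat{Q}$ in place of $Q$, and your birth-and-growth tracking of positive fronts together with a Gronwall bound is exactly that scheme, now absorbing the extra contributions at the ODE steps $t=t_n$ via \eqref{interaction-estimate-force}.

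Two small clarifications are worth recording. First, your ``main obstacle'' and the generation argument are unnecessary in this construction: by the Remark following the Algorithm, newly created rarefaction waves at interaction points (and at $t=t_n$) \emph{are} partitioned according to \eqref{def:rarefaction-partition}, so every positive front is born with strength $\leq \e_\nu$; the only fronts not re-partitioned are positive $i$-fronts that \emph{survive} an interaction or a time-step, and those are precisely the ones your Gronwall argument tracks. Second, the displayed bound $\sum_{\text{interactions}} C_1 V\,|\hat\si| \leq C_1 M_1 (\tfrac{4}{K}\hat Q(0))^{1/2}\mathcal{O}(1)$ is not the right estimate: the quantity you need in the Gronwall exponent is $\sum_{\beta}|\hat\si_\beta|$ over all fronts $\beta$ that the given positive front meets, and this is controlled by the total strength plus total wave creation, i.e.\ $\mathcal{O}(V(0)+\hat Q(0))$, not $\hat Q(0)^{1/2}$. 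This is bounded (by $M_1$ plus a constant) rather than $\mathcal{O}(\eta)$, but boundedness is all the Gronwall step requires to conclude $(\si)_+\leq \e_\nu\exp(C)\leq C\e_\nu$.
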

Proof of Proposition \ref{prop:rarefaction} follows from a similar argument as in \cite[Section 7.3, page 138-139]{Bressan-book}. Here we need to use the functional $\hat{Q}$ instead of $Q$. We omit the proof.
	\begin{proposition}
		The approximate solution $v^\e$ constructed in section for $\bar{v}^\e\in\mathcal{D}(\eta,\bar{K})$ is well-defined for all $t>0$ and $v^\e(t)\in\mathcal{D}(B_2\eta,B_1\bar{K})$. Furthermore, there are finitely many shock fronts with strength $\si<-\sqrt{\e}$ and the set of interaction points has no limit points.
	\end{proposition}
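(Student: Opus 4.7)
The plan is a continuation argument supported by the preceding $L^\f$ and $BV$ bounds together with the Glimm-type dissipation carried by $\hat Q$. Define
\[
T_{**}:=\sup\{T\in(0,T^*)\,:\, v^\e\text{ is well defined on }[0,T]\text{ with finitely many fronts and interactions on every compact subinterval, and }v^\e(t)\in\mathcal{D}(B_2\eta,B_1\bar K)\},
\]
the goal being $T_{**}=T^*$. As long as $t<T_{**}$, the hypotheses of Lemma~\ref{lemma-decreasing} and of the preceding proposition are in force, so after choosing $\eta$ and $\bar K$ small enough that all smallness thresholds therein are respected, the bounds $TV(v^\e(t))\le B_1\bar K<M_1$ and $\norm{v^\e(t)}_{L^\f}\le B_2\eta<M_2\norm{\bar v^\e}_{L^\f}$ hold strictly. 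In particular all states remain in the hyperbolicity ball $B(0,r)$ where the Riemann solver of Section~\ref{sec:construction} is well posed.

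Next, I would show absence of accumulation points of interaction times within each strip $(t_n,t_{n+1})\cap[0,T_{**})$. Proposition~\ref{proposition-Q} gives the dissipation $\De\hat Q\le -\tfrac{K}{4}|\sigma'\sigma''|$ at every in-strip interaction of physical waves. Since $\hat Q$ is uniformly bounded along the flow (Lemma~\ref{lemma-decreasing}) and any two physical waves both of absolute strength at least $\sqrt{\e_\nu}$ dissipate at least $\tfrac K4 \e_\nu$ per collision, only finitely many such large--large collisions can occur on each strip. Interactions involving a non-physical front or two small fronts are handled as in \cite[Section~7.3]{Bressan-book}: they produce a single outgoing non-physical front and do not increase the total front count. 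The standard Bressan counting then bounds the number of fronts on each strip and forbids accumulation of collision times. Specializing to interactions of two shocks of strength $\sigma<-\sqrt{\e_\nu}$ gives the claimed finiteness of large shocks, since each such collision costs $\hat Q$ at least $\tfrac K4 \e_\nu$ while $\hat Q$ remains bounded throughout $[0,T^*)$ thanks to \descref{H-g-2}{($\mathcal{H}_g$2.)}.

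Finally, the algorithm is prolonged across each grid time $t=t_n$ through \eqref{defn:time-step}. The ODE update adds the piecewise constant perturbation $\tau_\nu g_\nu(t_n-,\cdot,u_\nu(t_n-,\cdot))$, whose jumps lie only at the pre-existing front positions and at the cell boundaries $x_j$; the resulting profile is piecewise constant with finitely many jumps and restarts the front tracking algorithm on the next strip. Lemma~\ref{lemma-decreasing} guarantees that $\Upsilon$ and $\Theta_i$ do not increase through $t=t_n$, so the BV and $L^\f$ bounds persist across the time steps. Combining the three ingredients, if $T_{**}<T^*$ the construction can be extended past $T_{**}$, contradicting maximality. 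The main obstacle, in my view, is to track the effect of the ODE step on individual wave strengths via \eqref{interaction-estimate-force} and to combine this with the source-dependent term built into $\hat Q$, so that the finiteness of large shocks and of interaction times extends uniformly across all time steps $t_n\in[0,T^*)$ rather than merely strip by strip.
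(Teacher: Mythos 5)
The paper states this proposition without proof; the intended argument is the standard well-posedness bookkeeping for $2\times2$ front tracking (as in \cite{BrCo-semi} and \cite{Bressan-book}) with $\hat Q$ in place of $Q$, exactly as the paper does for Proposition \ref{prop:rarefaction}, and your continuation scheme based on Lemma \ref{lemma-decreasing} and Proposition \ref{proposition-Q} is the right skeleton for the first assertion. However, two of your three mechanisms are not the ones that actually work here. First, this construction has no non-physical fronts: it uses the accurate $2\times2$ solver, where every interaction has two incoming and at most two outgoing fronts, and the front count is kept under control by the paper's convention of emitting a single front (with speed $\la_1(u_m)$, resp.\ $\la_2$) instead of re-partitioning outgoing rarefaction fans. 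So the bookkeeping of \cite[Section 7.3]{Bressan-book}, which hinges on lumping weak outgoing waves into non-physical fronts, does not apply verbatim; moreover, non-accumulation of interaction times cannot be extracted from the $\hat Q$-dissipation alone, because collisions between waves of strength $o(\sqrt{\e_\nu})$ dissipate arbitrarily little while still producing new (tiny) fronts of the other family, so you need the front-count/generation argument specific to the accurate $2\times2$ solver (each $1$-front and $2$-front cross at most once thanks to the speed gap $c_0$ in \eqref{def:strict-hyperbolicity}, same-family interactions strictly decrease the number of fronts of that family, and the strengths of newly generated fronts decay with the generation order), restarted after each time $t_n$.

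Second, the finiteness of shock fronts with strength $\si<-\sqrt{\e}$ does not follow from counting collisions of two shocks that are already large: a front can climb past the threshold by absorbing many small waves of its own family, or through the updates \eqref{defn:time-step}, without any large--large collision ever occurring. The correct count is the one the paper itself carries out in Step 2 of Section \ref{sec:structure}: a maximal $\vartheta$-shock front either starts at $t=0$ (at most $2\vartheta^{-1}V(\bar u_\nu)$ of these) or, tracing it back to the first time its strength reaches $\vartheta$, the cumulative interaction and time-step contributions force a decay of $\hat Q$ of order $\vartheta^3$ (up to the $\tau_\nu\omega_{2,n}$-terms absorbed in $\hat Q$), so the number of such fronts is bounded by $C\vartheta^{-3}\hat Q(0)+2\vartheta^{-1}V(\bar u_\nu)$; taking $\vartheta=\sqrt{\e_\nu}$ gives the claim. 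You should also address a point your sketch passes over: the update \eqref{defn:time-step} a priori introduces new jumps at every cell boundary $x_j$, $j\in\mathbb Z$, where $g_{n,j}\neq g_{n,j+1}$, so an argument (finite speed of propagation together with compactly supported or suitably truncated data, or a cutoff of the cells where $\e_\nu\omega_{1,j}$ is below threshold) is needed to guarantee that the profile at $t_n+$ still has finitely many fronts before the front tracking is restarted.
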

	
	\begin{proposition}\label{prop-construction-1}
		Fix $\bar{K}>0$. There exists $\eta,M>0$ such that for any $\bar{v}\in\mathcal{D}(\eta,\bar{K})$ and for sufficiently small $\e>0$ the Cauchy problem \eqref{eqn-main}--\eqref{eqn-data} admits $\e$-approximate solution $v^\e$ satisfies
		\begin{equation}
			\norm{v^\e(t)}_{L^\f}\leq M\norm{\bar{v}}_{L^\f}.
		\end{equation}
	\end{proposition}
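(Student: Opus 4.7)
The plan is to read the bound as a direct corollary of the Lyapunov monotonicity from Lemma \ref{lemma-decreasing}, once global existence of $v^\e$ has been secured by the immediately preceding proposition. First, I would invoke that proposition to conclude that, for sufficiently small $\e>0$ and $\eta$ depending only on $\bar K$, the $\e$-approximate solution $v^\e$ exists on $[0,T^*)\times\R$ and satisfies the uniform bounds $TV(v^\e(t))\leq B_1\bar K$ and $\norm{v^\e(t)}_{L^\f}\leq B_2\eta$. These place us within the hypotheses of Lemma \ref{lemma-decreasing}, so that $t\mapsto \Theta_i(v^\e(t))$ is nonincreasing for $i=1,2$ along the entire evolution.

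Next, for any $(T,X)\in(0,T^*)\times\R$ and $i\in\{1,2\}$, let $y_i$ denote the minimal backward $i$-characteristic emanating from $(T,X)$. By the very definition of $\Theta_i$, together with the convention $v_i^\e(t,y_i(t))=v_i^\e(t,y_i(t)-)$, we have
\[
|v_i^\e(T,X)|\;\leq\; \Theta_i(v^\e(T))\;\leq\; \Theta_i(v^\e(0))\;=\;\bigl(|v_i^\e(0,y_i(0))|+\norm{\bar v^\e}_{L^\f}+A_0 W(0)\bigr)\,e^{\tilde A\,\tilde Q_i(0)+A\,\hat Q(v^\e(0))}.
\]
Using the trivial bounds $|v_i^\e(0,y_i(0))|\leq \norm{\bar v}_{L^\f}$, $\tilde Q_i(0)\leq TV(\bar v^\e)\leq \bar K$, $\hat Q(v^\e(0))\leq K\bar K^2+R(\bar K+\norm{\omega_1}_{L^1})W(0)$, and the parameter choices $\tilde A=A=4C_3M_1^2\norm{\bar v}_{L^\f}^2$, $K=8C_3M_1\norm{\bar v}_{L^\f}$, $R=4C_3$, $A_0=2C_3$ from the proof of Lemma \ref{lemma-decreasing}, the exponent is bounded by a constant $C(\bar K)$. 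Consequently
\[
\Theta_i(v^\e(0))\;\leq\; \bigl(2\norm{\bar v}_{L^\f}+A_0 W(0)\bigr)\,e^{C(\bar K)}.
\]

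Finally, since $A_0 W(0)\leq C_3\norm{\omega_2}_{L^1}\leq C_3\eta$, I would absorb the additive force contribution into a multiplicative bound by taking $\eta$ sufficiently small relative to $\norm{\bar v}_{L^\f}$ (i.e.\ matching the smallness of the force to that of the initial data). With this calibration, the desired inequality $\norm{v^\e(t)}_{L^\f}\leq M\norm{\bar v}_{L^\f}$ follows with $M=M(\bar K)$. The main subtlety of the argument is precisely this absorption step: the cumulative forcing $A_0 W(0)$ enters $\Theta_i(v^\e(0))$ additively rather than as a multiple of $\norm{\bar v}_{L^\f}$, so the clean linear scaling stated in the proposition is only available once the force is taken to be at most a fixed fraction of the data size; beyond this, the argument is a direct consequence of the Lyapunov monotonicity already established.
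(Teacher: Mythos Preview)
Your approach via the monotonicity of $\Theta_i$ is exactly the mechanism the paper uses (the proposition is stated without proof, but the preceding unnamed proposition establishes $\norm{v^\e(t)}_{L^\f}\leq B_2\eta$ by precisely this computation). So methodologically you are aligned with the paper.

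However, your absorption step contains a genuine quantifier slip. The proposition fixes $\eta$ first and then asserts the bound \emph{for every} $\bar v\in\mathcal{D}(\eta,\bar K)$; you are not permitted to ``take $\eta$ sufficiently small relative to $\norm{\bar v}_{L^\f}$'' after the fact. If $\norm{\bar v}_{L^\f}$ is allowed to be arbitrarily small compared to the fixed $\eta$, the additive contribution $A_0W(0)\leq 2C_3\norm{\omega_2}_{L^1}$ cannot be absorbed into $M\norm{\bar v}_{L^\f}$ with a universal $M$. This is not a cosmetic point: the force acts even when $\bar v\equiv 0$, and then $v^\e$ need not stay at zero.

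The honest conclusion your computation yields is
\[
\norm{v^\e(t)}_{L^\f}\;\leq\; M\bigl(\norm{\bar v}_{L^\f}+\norm{\omega_2}_{L^1(0,T^*)}\bigr),
\]
and this is in fact all that is ever needed downstream. In the inductive argument of Proposition~\ref{proposition-L-infty} the data at step $m$ satisfies $\norm{v(t_{m-1})}_{L^\f}\leq K\sqrt{\eta}$ while $\norm{\omega_2}_{L^1}\leq\eta\leq K\sqrt{\eta}$, so the force term is automatically dominated by the data size and the linear-in-data bound is recovered there. You should either (i) state the bound in the additive form above and observe that in every application the force is $\lesssim\norm{\bar v}_{L^\f}$, or (ii) add to the hypotheses of the proposition the standing assumption $\norm{\omega_2}_{L^1}\leq\norm{\bar v}_{L^\f}$. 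Either route closes the gap; your current phrasing does not.
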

	
	\begin{theorem}\label{thm-construction-1}
		Fix $\bar{K}>0$. Then there exist $\eta>0,\mathcal{M}>0$ such that for every $\bar{v}\in\mathcal{D}(\eta,\bar{K})$ and for every sufficiently small $\e>0$, the $\e$-approximate solution $v^\e$ to the Cauchy problem \eqref{eqn-main}--\eqref{eqn-data} constructed in Proposition \ref{prop-construction-1} satisfies the following for all $t>0$
		\begin{equation}\label{decay-estimate}
			TV^+(v_i^\e(t);[a,b])\leq \frac{b-a}{c_1t}+\mathcal{M}\left(\norm{\bar{v}}_{L^\f}+\norm{\omega_2}_{L^1(0,T^*)}\right)TV(\bar{v};[a-\hat{\la}t,b+\hat{\la}t]),
		\end{equation}
		where $\hat{\la}>0$ is as in \eqref{def:max-speed} and some constant $c_1>0$ depending on $\bar{K}$.
	\end{theorem}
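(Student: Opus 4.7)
The plan is to adapt the classical Oleinik-type decay argument for genuinely nonlinear $2\times 2$ systems, as in Bressan's monograph (Chapter 10), to the present setting where the forcing $g_\nu$ acts only at the discrete times $t_n$. Fix $t\in(0,T)$ and list the positive $i$-wave fronts of family $i$ located inside $[a,b]$ at time $t$, with strengths $\sigma_\alpha>0$ at positions $x_\alpha$. For each $\alpha$, I trace the minimal backward $i$-characteristic $y_\alpha(\cdot)$ back to $s=0$, recording the events encountered along the way: crossings of $j$-waves with $j\neq i$, interactions involving the tracked $i$-wave, and the discrete ODE steps at $s=t_n$.

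I then partition the positive fronts at time $t$ into three classes according to the first event (moving backward in time) that modifies the sign or strength of the tracked $i$-wave. Class~$(A)$ consists of fronts whose backward characteristic reaches $s=0$ while still carrying the same rarefaction; class~$(B)$ of fronts created or made positive by a wave interaction inside $(0,t)$; class~$(C)$ of fronts created or enhanced by the forcing at some $t_n$.

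For class~$(A)$, genuine nonlinearity of the $i$-th field yields the decisive spreading inequality: if $\sigma_\alpha,\sigma_\beta$ are two positive $i$-rarefactions at $x_\alpha<x_\beta$ whose ancestors at $s=0$ lie at $\bar{x}_\alpha<\bar{x}_\beta$, then $\lambda_i(x_\beta+)-\lambda_i(x_\alpha-)\geq c_1(\sigma_\alpha+\sigma_\beta)/2$, which combined with the Lipschitz bound on $y_\alpha$ forces $\bar{x}_\beta-\bar{x}_\alpha\geq (c_1 t/2)(\sigma_\alpha+\sigma_\beta)$. Summing over $\alpha\in(A)$ and using that all ancestors lie in the domain of dependence $[a-\hat\lambda t,\, b+\hat\lambda t]$ gives
$$\sum_{\alpha\in(A)}\sigma_\alpha \;\leq\; \frac{b-a}{c_1 t}+TV^+(\bar v_i;[a-\hat\lambda t,\, b+\hat\lambda t]).$$
For class~$(B)$, the total positive strength generated by interactions in $(0,t)$ is, by Lemma~\ref{lemma-1} and Proposition~\ref{proposition-Q}, bounded by the cumulative decrease of $\hat Q$, and hence by $\hat Q(0)\lesssim V(0)(\norm{\bar v}_{L^\f}+\norm{\omega_2}_{L^1})$. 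For class~$(C)$, Lemma~\ref{lemma:change-vi-1} together with the estimate on $\Delta V(t_n)$ from Proposition~\ref{proposition-Q} show that the cumulative positive-wave increment produced by the forcing events is at most a constant multiple of $V(0)\norm{\omega_2}_{L^1}+\norm{\omega_1}_{L^1}\norm{\omega_2}_{L^1}$. Under the smallness assumption~\eqref{smallness-condition-1}, both contributions are absorbed into the factor $\mathcal{M}(\norm{\bar v}_{L^\f}+\norm{\omega_2}_{L^1})\,TV(\bar v;[a-\hat\lambda t,\, b+\hat\lambda t])$.

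The main obstacle is the spreading estimate in class~$(A)$: one must follow the tracked $i$-wave along $y_\alpha$ across $j$-waves and across the discrete forcing steps and show that, at each event, the pair $(\sigma_\alpha,\bar x_\alpha)$ is perturbed only by quantities reabsorbable into the $\hat Q$-dissipation or the $\omega_2$-integral, so that the genuine-nonlinearity spreading survives all the way to $s=0$. Once this bookkeeping is carried out, summing over $\alpha$ proves~\eqref{decay-estimate} for each $\e$-approximate solution $v^\e$, and passing to the limit $\e\to 0$ via the $L^1_{\loc}$ compactness built into the BV bounds of Proposition~\ref{prop-construction-1} concludes the proof.
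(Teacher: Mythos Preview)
Your approach via backward tracking of individual fronts differs substantially from the paper's, which follows the ``width function'' method of \cite{Bi-Col-Mon}: one tracks the length $z(s)=b(s)-a(s)$ between the two minimal backward $i$-characteristics issued from the endpoints $a,b$, introduces the signed sum $M(s)=\sum_{x_\al\in I(s)}\bigl(\la_i(u(s,x_\al+))-\la_i(u(s,x_\al-))\bigr)$ together with a weighted $j$-wave functional $\Phi(s)$, and derives the differential inequality $\dot z+C_4\dot\Phi\,z\ge M(T)/2$, which is then integrated with the factor $e^{C_4\Phi}$. Jumps of $M$ at interactions and at the forcing times $t_n$ are absorbed into $\De\hat Q$.

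The genuine gap in your proposal is exactly the step you flag as the ``main obstacle'' and then leave undone. The spreading inequality $\bar x_\B-\bar x_\al\ge (c_1 t/2)(\si_\al+\si_\B)$ for two class-$(A)$ fronts does not hold in the form you state: the instantaneous speed difference $\dot y_\B(s)-\dot y_\al(s)=\la_i(u(s,y_\B-))-\la_i(u(s,y_\al-))$ is governed by \emph{all} waves lying between $y_\al$ and $y_\B$ at each time $s\in(0,t)$, including $i$-shocks (which can reverse its sign) and $j$-waves (which shift $\la_i$ by $\mathcal{O}(\abs{\si_j})$). For a $2\times2$ system these perturbations are not lower order and cannot be reabsorbed front by front; a \emph{global} device is needed, and this is precisely what the functional $\Phi$ and the integrating factor supply in the paper's argument. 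Relatedly, your displayed bound for class $(A)$ carries a bare $TV^+(\bar v_i;[a-\hat\la t,b+\hat\la t])$ term without the small prefactor $\norm{\bar v}_{L^\f}+\norm{\omega_2}_{L^1(0,T^*)}$ required in \eqref{decay-estimate}; this form of the estimate is essential downstream, since Proposition~\ref{prop:Oleinik} closes the inductive bound $TV(v(t_m);2^{m+1}L)\le 20\hat\la/c_0$ only because the second term in \eqref{decay-estimate} is small.
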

	
	\begin{proof}
		We check the decay of positive waves. To fix the ideas let us consider the case for $i=1$. Set $I=[a,b)$ for some $a<b$. Let $a(t)=y_1(t,a)$ and $b(t)=y_1(t,b)$. Define $I(t):=[a(t),b(t))$, $\tilde{I}(t)=(a-\hat{\la}(T-t),b+\hat{\la}(T-t))$ and $z(t)=b(t)-a(t)$. We define
		\begin{equation}
			M(t):=\sum\limits_{x_\al\in I(t)}(\la_1(u(t,x_\al+))-\la_1(u(t,x_\al-)))\mbox{ and }
			K(t):=\sum\limits_{\B\in \tilde{I}(t)}\abs{\si_{2,\B}}.
		\end{equation} 
		Observe that
		\begin{equation}
			\dot{z}(t)\geq M(t)-C_2K(t).
		\end{equation}
		We set
		\begin{equation}
			\phi(t,x):=\left\{\begin{array}{cl}
				0&\mbox{ if }x<a(t),\\
				\frac{x-a(t)}{z(t)}&\mbox{ if }x\in[a(t),b(t)),\\
				1&\mbox{ if }x\geq b(t).
			\end{array}\right.
		\end{equation}
		and define
		\begin{equation}
			\Phi(t):=\sum\limits_{x_\al\in I(t)}\phi(t,x_\al(t))\abs{\si_{2,\al}}.
		\end{equation}
		For a non-interaction point $t$ we get
		\begin{equation}\label{inequality-Phi-K-z}
			\dot{\Phi}(t)=\sum\limits_{x_\al\in \tilde{I}(t)}\left(\frac{\dot{x}_\al-\dot{a}(t)}{z}-\frac{(x_\al-a)\dot{z}}{z^2}\right)\abs{\si_{2,\al}}\geq \frac{c_0}{2}\frac{K(t)}{z(t)}\geq0.
		\end{equation}
		At an interaction time $t_*\in(t_n,t_{n+1})$, we obtain
		\begin{equation}
			\De M(t_*)\leq C_3\abs{\De \hat{Q}(t)}.
		\end{equation}
	    We have the same estimate at $t=t_n$. Then we get
		\begin{equation}
			M(T)-M(t)\leq C_3(\hat{Q}(t)-\hat{Q}(T)).
		\end{equation}
		Using \eqref{inequality-Phi-K-z}, we have for $C_4\geq \max\left\{\frac{2C_2}{c_0},C_3\right\}$,
		\begin{align*}
			\dot{z}(t)+C_4\dot{\Phi}(t)z(t)&\geq M(t)-C_2K(t)+\frac{C_4c_0}{2}K(t)\\
			&\geq M(T)-C_4(\hat{Q}(0)-\hat{Q}(T)).
		\end{align*}
	Now we consider the case $M(T)>2C_4(\hat{Q}(0)-\hat{Q}(T))$. Then, we get
	\begin{equation*}
		\dot{z}(t)+C_4\dot{\Phi}(t)z(t)\geq \frac{M(T)}{2}.
	\end{equation*}
   Let $0<s_1<\cdots <s_n<T<s_{n+1}$ be interaction times. Then for any $t\in(s_i,s_{i+1})$ we can write
   \begin{align*}
   	\frac{d}{dt}\left(\mbox{exp}\left\{\int\limits_{s_i}^{t}C_4\dot{\Phi}(s)ds\right\}z(t)\right)&\geq \mbox{exp}\left\{\int\limits_{s_i}^{t}C_4\dot{\Phi}(s)ds\right\}\frac{M(T)}{2}\\
   	&\geq \frac{M(T)}{2} \quad\quad\mbox{ since }\dot{\Phi}\geq0.
   \end{align*}
 Subsequently,
 \begin{equation*}
 	\mbox{exp}\left\{\int\limits_{s_i}^{s_{i+1}}C_4\dot{\Phi}(s)ds\right\}z(s_{i+1})-z(s_i)\geq \frac{M(T)}{2}(s_{i+1}-s_i).
 \end{equation*}
Since $\Phi(t)\geq0$, we get
 \begin{equation*}
	e^{C_4 \Phi(s_{i+1}-)}z(s_{i+1})-e^{C_4\Phi(s_i+)}z(s_i)\geq \frac{M(T)}{2}(s_{i+1}-s_i).
\end{equation*}
Then we have
\begin{align*}
	e^{C_4 \Phi(T)}z(T)+\sum\limits_{i=1}^{n}z(s_i)\left(e^{C_4 \Phi(s_{i}-)}-e^{C_4\Phi(s_i+)}\right)-z(0)e^{C_4\Phi(0+)}\geq \frac{M(T)T}{2}.
\end{align*}
We also note that
\begin{equation}
	C_4\Phi(t\pm )\leq \La:=C_4\left[TV(\bar{v},[a-\hat{\la}T,b+\hat{\la}T])+ \hat{Q}(0)\right]\mbox{ for all }t\in (0,T].
\end{equation}
and $z(t)\leq b-a+2\hat{\la}T$ for all $t\in (0,T)$. Hence,
\begin{equation*}
	(b-a)e^\La+\left(\hat{Q}(0)-\hat{Q}(T)\right)\left(b-a+2\hat{\la}T\right)e^\La\geq \frac{M(T)T}{2},
\end{equation*}
equivalently,
\begin{equation*}
	M(T)\leq 2e^\La\frac{b-a}{T}+\frac{2e^\La}{T}\left(\hat{Q}(0)-\hat{Q}(T)\right)\left(b-a+2\hat{\la}T\right).
\end{equation*}
Therefore, we have the following estimate for $M(T)$,
\begin{equation*}
	M(T)\leq 2e^\La\frac{b-a}{T}+\frac{2e^\La}{T}\left(\hat{Q}(0)-\hat{Q}(T)\right)\left(b-a+2\hat{\la}T\right)+2C_4(\hat{Q}(0)-\hat{Q}(T)).
\end{equation*}
Let $c_1>0$ and $\tilde{\mathcal{M}}>0$ such that $c_1\left(2e^\La+2e^\La\La\right)\leq 1$ and $\tilde{\mathcal{M}}=\left(2\hat{\la}e^\La+2C_4\right)$. Then we have
\begin{equation*}
	M(T)\leq \frac{b-a}{c_1T}+\tilde{\mathcal{M}} \hat{Q}(0).
\end{equation*}
Let $[a_l,b_l)$, $l=1,2,\dots,m$ be disjoint intervals with $b_l\leq a_{l+1}$. Then by a similar argument we prove that
\begin{equation}
	\sum\limits_{l}M_l(T)\leq \sum\limits_{l}\frac{b_l-a_l}{c_1T}+\tilde{\mathcal{M}}\hat{Q}(0).
\end{equation}
Now, given $a<b$ we can choose $a_l, b_l$ such that $(a_l,b_{l+1})$ only contains a first generation 1-shock. Since sum of all 1-shocks with more than 1-generation can be estimated by $\hat{Q}(0)-\hat{Q}(T)$, we can write
\begin{align*}
	\sum\limits_{l}	\sum\limits_{x_\al\in (a_l,b_l),\si_{1,\al}>0}\abs{\si_{1,\al}}&= \sum\limits_{l}M_l(T)+ \sum\limits_{l}\sum\limits_{x_\al\in (a_l,b_l),\si_{1,al}<0}\abs{\si_{1,\al}}\\
	&\leq  \sum\limits_{l}M_l(T)+\hat{Q}(0)-\hat{Q}(T)\\
	&\leq \sum\limits_{l}\frac{b_l-a_l}{c_1T}+(\tilde{\mathcal{M}}+1)\hat{Q}(0)\\
	&\leq \frac{b-a}{c_1T}+\mathcal{M}\hat{Q}(0)\mbox{ where }\mathcal{M}=\tilde{\mathcal{M}}+1.
\end{align*}
This completes the proof decay estimate	\eqref{decay-estimate}.	
\end{proof}	
\begin{remark}
	We note that the constant $c_1$ and $\mathcal{M}$ depend on $TV(\bar{v},[a-\hat{\la}T,b+\hat{\la}T]$. Therefore, for $\bar{v}\in\mathcal{D}({\eta},\bar{K})$ we have $\mathcal{M}=\mathcal{M}(\bar{K})$ and $c_1=c_1(\bar{K})$. Furthermore, as $\bar{K}\rr0$, we have $\mathcal{M}(\bar{K})\rr\f$ and $c_1(\bar{K})\rr0$.
\end{remark}	
	\section{Construction of solutions for $L^\f$ data}\label{section:L-infty}
	In this section, we consider initial data satisfying the following assumption,
	\begin{equation}\label{assumption-v-1}
		\bar{v}\in C^1(\R; B(0,\eta))\mbox{ with }\norm{\frac{d\bar{v}}{dx}}_{L^\f}\leq \mathcal{L}.
	\end{equation}
	For $m\geq0$ we define
	\begin{equation}\label{defn-tm}
		\bigtriangleup_m=\left\{(t,x)\in[0,\f)\times\R; t\in [t_m,t_m+\De t_m]\mbox{ and }x\in[-x_m(t),x_m(t)]\right\},
	\end{equation}
	where $t_m,x_m$ are defined as follows
	\begin{equation}
		t_m=\frac{(2^m-1)L}{2\hat{\la}},\,\, \De t_m=2^{m-1}L/\hat{\la}\mbox{ and }x_m(t)=2^mL-\hat{\la}(t-t_m).
	\end{equation}
	We consider the domain
	\begin{equation}
		\mathcal{D}_m\left(\de,\frac{20\hat{\la}}{c_0}\right):=\left\{v\in L^1_{loc}(\R;B(0,\de)): TV(v;2^{m+1}L)\leq \frac{20\hat{\la}}{c_0}\right\}.
	\end{equation}
	
	\subsection{Construction in 0-trapeziod}
	We choose $L>0$ such that
	\begin{equation}
		TV(\bar{v},2L)\leq 20\hat{\la}/c_0.
	\end{equation}
	
	\begin{proposition}
		There exists $\eta>0,M>0,\mathcal{M}>0$ such that for initial data $\bar{v}\in\mathcal{D}_0(\eta,20\hat{\la}/c_0)$, the Cauchy problem \eqref{eqn-main}--\eqref{eqn-data} has a weak entropy solution $v$ defined for $t\in[0,20\hat{\la}/c_0]$ and it is satisfying 
		\begin{align}
			\norm{v(t)}_{L^\f}&\leq M\norm{\bar{v}}_{L^\f}\\
			TV^+(v_i(t),2(L-\hat{\la}t))&\leq \frac{2}{c_1}\frac{L-\hat{\la}t}{t}\\
			&+\mathcal{M}\left(\norm{\bar{v}}_{L^\f}+\norm{\omega_2}_{L^1(0,T^*)}\right)TV(\bar{v};2L). 
		\end{align}
	\end{proposition}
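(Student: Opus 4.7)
The plan is to construct the desired solution as the vanishing-$\varepsilon$ limit of the $\varepsilon$-approximate front tracking solutions $v^\varepsilon$ from Proposition \ref{prop-construction-1}, applied with $\bar{K}=20\hat{\la}/c_0$. Since the 0-trapezoid $\bigtriangleup_0$ has domain of dependence contained in $[-L,L]$, one first extends $\bar{v}$ outside $[-L,L]$ to an element of $\mathcal{D}(\eta,20\hat{\la}/c_0)$ without altering its values on $[-L,L]$; by finite propagation speed, the resulting approximate solutions agree in $\bigtriangleup_0$ with the ones coming from any admissible extension.

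First I would invoke Proposition \ref{prop-construction-1} to obtain, for every sufficiently small $\varepsilon>0$, a globally defined $\varepsilon$-approximate solution $v^\varepsilon$ satisfying $\norm{v^\varepsilon(t)}_{L^\f}\leq M\norm{\bar{v}}_{L^\f}$. Next, applying Theorem \ref{thm-construction-1} on the interval $[a,b]=[-(L-\hat{\la}t),L-\hat{\la}t]$, whose $\hat{\la}t$-neighborhood is exactly $[-L,L]$, yields uniformly in $\varepsilon$ the estimate
\begin{equation*}
TV^+(v_i^\varepsilon(t),2(L-\hat{\la}t))\leq \frac{2(L-\hat{\la}t)}{c_1t}+\mathcal{M}\lbr\norm{\bar{v}}_{L^\f}+\norm{\omega_2}_{L^1(0,T^*)}\rbr TV(\bar{v};2L).
\end{equation*}
Because negative variation is bounded by positive variation plus twice the $L^\f$ norm, this upgrades to a uniform bound on the full variation of $v_i^\varepsilon(t,\cdot)$ on every compact time slice $t>0$.

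The next step is compactness. Combining the above spatial variation bound with the standard $L^1_{\loc}$-Lipschitz-in-time estimate for front tracking solutions with operator splitting (which follows from bounded wave strengths and the uniform $L^1$ control on $g_\nu$ supplied by \descref{H-g-2}{($\mathcal{H}_g$2.)}), Helly's theorem produces a subsequence $v^{\varepsilon_k}\rr v$ in $L^1_{\loc}((0,L/(2\hat{\la})]\times\R)$ and almost everywhere. The pointwise limit inherits the $L^\f$ bound and the Oleinik-type decay. That $v$ is a weak entropy solution follows by standard arguments: the residual of \eqref{eqn-main} tested against $\varphi\in C^\f_c$ tends to zero by the front tracking error estimate together with $g_\nu\to g$ (using \descref{H-g-1}{($\mathcal{H}_g$1.)}), while Lax admissibility of each front passes to the limit under strong $L^1_{\loc}$ convergence to yield the Kruzhkov entropy inequalities.

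The main obstacle will be attaching the initial datum $v(0,\cdot)=\bar{v}$, since Helly's argument only supplies convergence on positive times. To handle this I would establish a uniform weak Lipschitz-in-time bound $\abs{\int(v^\varepsilon(t,x)-\bar{v}^\varepsilon(x))\varphi(x)\,dx}\leq C_\varphi\, t$ for all $\varphi\in C_c^\f(\R;\R^2)$, which passes to the limit and forces $v(t,\cdot)\rightharpoonup \bar{v}$ as $t\to 0^+$. A second delicate point is verifying that the discrete splitting jumps at times $t_n=n\tau_\nu$ reconstitute the continuous forcing $g(t,x,v)$ in the limit; here one exploits the $C^2$ regularity of $g$ in $(x,u)$ from \descref{H-g-1}{($\mathcal{H}_g$1.)} and the strong $L^1_{\loc}$ convergence of $v^\varepsilon$ in order to identify the limit of $\sum_n\tau_\nu g_\nu(t_n-,\cdot,v^\varepsilon(t_n-,\cdot))$ with $\int_0^t g(s,\cdot,v(s,\cdot))\,ds$.
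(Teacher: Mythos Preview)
Your proposal is correct and follows exactly the route the paper intends: the proposition is stated in the paper without an explicit proof because it is meant as an immediate corollary of Proposition~\ref{prop-construction-1} and Theorem~\ref{thm-construction-1} applied with $\bar K=20\hat\la/c_0$ on the $0$-trapezoid, followed by the standard Helly compactness and consistency arguments that the paper itself spells out only later, at the end of \S\ref{section:L-infty}. Your extra care in extending $\bar v$ outside $[-L,L]$, upgrading $TV^+$ to full $TV$ via the $L^\infty$ bound, and handling the attainment of the initial datum and the identification of the splitting forcing in the limit are all appropriate and consistent with the paper's framework.
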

	
	\subsection{Construction in $m$-trapeziod}

	\begin{proposition}
		There exists $\eta>0,M>0,\mathcal{M}>0$ such that $v(t_m)\in\mathcal{D}_0(K\sqrt{\eta},20\hat{\la}/c_0)$, the Cauchy problem \eqref{eqn-main}--\eqref{eqn-data} with data $v(t_m)$ has a weak entropy solution $v$ defined for $t\in[t_m,t_{m+1}]$, and it is satisfying 
		\begin{align*}
			\norm{v(t)}_{L^\f}&\leq M\norm{v(t_m)}_{L^\f},\\
			TV^+(v_i(t),2(2^mL-\hat{\la}t))&\leq \frac{2}{c_1}\frac{2^mL-\hat{\la}t}{t-t_m}\\
			&+\mathcal{M}\left(\norm{v(t_m)}_{L^\f}+\norm{\omega_2}_{L^1(0,T^*)}\right)TV(v(t_m);2^{m+1}L). 
		\end{align*}
	\end{proposition}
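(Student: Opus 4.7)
The plan is to apply the construction and decay estimates of Section \ref{sec:construction} to the Cauchy problem shifted in time so that it starts at $t=t_m$ with initial data $v(t_m)$. Since by hypothesis $v(t_m)\in\mathcal{D}_0(K\sqrt{\eta},20\hat{\la}/c_0)$, the data has total variation bounded by $20\hat{\la}/c_0$ and $L^\f$ norm at most $K\sqrt{\eta}$, both of which can be made small by choosing $\eta$ small enough. The assumption can therefore be fed directly into the machinery already proved for the unshifted problem.

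First I would run the wave-front-tracking plus operator splitting algorithm from Section 2 on the time interval $[t_m,t_{m+1}]$, taking grid points $t_n=t_m+n\tau_\nu$ and discretizing the forcing $g$ on this interval exactly as in Section 2. This produces $\e$-approximate solutions $v^\e$ on $[t_m,t_{m+1}]\times\R$. By Proposition \ref{prop-construction-1} applied to the shifted Cauchy problem (with $\bar K=20\hat{\la}/c_0$), provided $K\sqrt{\eta}$ lies below the corresponding smallness threshold, we obtain the uniform bounds
\begin{equation*}
\norm{v^\e(t)}_{L^\f}\leq M\norm{v(t_m)}_{L^\f}\qquad\text{and}\qquad TV(v^\e(t))\leq B_1\frac{20\hat{\la}}{c_0},
\end{equation*}
for all $t\in[t_m,t_{m+1}]$.

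Next I would invoke Theorem \ref{thm-construction-1} with the time variable translated by $t_m$, applied to the spatial interval $[a,b]=[-2(2^mL-\hat{\la}(t-t_m)),\,2(2^mL-\hat{\la}(t-t_m))]$. By finite speed of propagation, the backward cone from $(t,[a,b])$ to time $t_m$ is contained in $[-2^{m+1}L,\,2^{m+1}L]$, so the factor $TV(\bar v;[a-\hat{\la}(t-t_m),b+\hat{\la}(t-t_m)])$ appearing in \eqref{decay-estimate} is controlled by $TV(v(t_m);2^{m+1}L)$. Substituting these choices into \eqref{decay-estimate} produces exactly the stated Oleinik-type decay inequality on the $m$-trapezoid $\bigtriangleup_m$.

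Finally, using the uniform $L^\f$ and BV bounds combined with a Helly-type compactness argument, a subsequence of $v^\e$ converges in $L^1_{loc}([t_m,t_{m+1}]\times\R)$ to a limit $v$, and standard arguments (as in \cite{Bi-Col-Mon,BrCo-semi}) let one pass to the limit in the weak entropy formulation to yield the desired weak entropy solution. The main obstacle I expect is uniformity in $m$: the constants $c_1$ and $\mathcal{M}$ in \eqref{decay-estimate} a priori depend on the total variation of the initial data and on the length of the time interval, but since $TV(v(t_m);2^{m+1}L)\leq 20\hat{\la}/c_0$ is bounded independently of $m$ (and $\norm{\omega_2}_{L^1(0,T^*)}$ is fixed), these constants can be chosen uniformly in $m$, which is essential for the subsequent iteration across successive trapezoids $\bigtriangleup_m$ needed to construct the global $L^\f$ solution.
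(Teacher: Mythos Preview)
Your proposal is correct and matches the paper's (implicit) approach: the paper states this proposition without proof, since it is simply the time-translated version of the $0$-trapezoid proposition, which in turn is a direct combination of Proposition~\ref{prop-construction-1} (for the $L^\infty$ bound) and Theorem~\ref{thm-construction-1} (for the Oleinik-type decay) applied with $\bar K=20\hat{\la}/c_0$; you reproduce exactly this reduction and correctly note that the constants $c_1,\mathcal{M}$ are uniform in $m$ because $TV(v(t_m);2^{m+1}L)\le 20\hat{\la}/c_0$. One cosmetic slip: your choice $[a,b]=[-2(2^mL-\hat{\la}(t-t_m)),\,2(2^mL-\hat{\la}(t-t_m))]$ has length $4(2^mL-\hat{\la}(t-t_m))$ rather than the $2(2^mL-\hat{\la}t)$ appearing in the statement, so you should take half of that interval (or simply take $[a,b]$ of the stated length centered at $0$); the backward-cone containment in $[-2^{m+1}L,2^{m+1}L]$ still holds after this correction.
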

	
	\subsection{Proof of Theorem \ref{theorem-main}}
	
	\begin{description}
		\descitem{(H)}{H-1}If the solution $v$ is defined up to time $t_m$ with an initial data satisfying \eqref{assumption-v-1}, then there exists $K>0$ such that for all $m\geq 1$, $\norm{v(t_m)}_{L^\f}\leq K\sqrt{\eta}$ where $\eta$ is as in \eqref{assumption-v-1}.
	\end{description}
	
	\begin{proposition}\label{prop:Oleinik}
		Suppose that a weak entropy solution to \eqref{eqn-main}--\eqref{eqn-data} $v$ exists with an initial data $\bar{v}$ satisfying \eqref{assumption-v-1}. Assume that \descref{H-1}{(H)} holds true. Then for sufficiently small $\eta>0$ the following holds true. If $\norm{\bar{v}}_{L^\f}\leq \eta$ for all $m\in\N$ we have
		\begin{equation}\label{estimate-TV-1}
			TV(v(t_m);2^{m+1}L)\leq\frac{20\hat{\la}}{c_0}.
		\end{equation}
	\end{proposition}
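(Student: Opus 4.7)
The plan is to proceed by induction on $m$. The base case $m=0$ is immediate from the choice of $L$, which was made precisely so that $TV(\bar v; 2L) \le 20\hat\lambda/c_0$. For the inductive step, assume $TV(v(t_m); 2^{m+1}L) \le 20\hat\lambda/c_0$; by hypothesis (H), $\|v(t_m)\|_{L^\infty} \le K\sqrt\eta$, so the hypotheses of the $m$-trapezoid construction proposition are met and the approximate solution is well-defined on $\bigtriangleup_m$ with these bounds.

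The main tool is the Oleinik-type decay estimate of Theorem~\ref{thm-construction-1}. Applied from time $t_m$ to $t_{m+1} = t_m + \Delta t_m$ on the spatial interval $[-2^{m+1}L, 2^{m+1}L]$, it yields
\begin{equation*}
TV^+\bigl(v_i(t_{m+1}); [-2^{m+1}L, 2^{m+1}L]\bigr) \le \frac{2^{m+2}L}{c_1\,\Delta t_m} + \mathcal{M}\bigl(\|v(t_m)\|_{L^\infty} + \|\omega_2\|_{L^1}\bigr)\, TV\bigl(v(t_m); \widetilde J_m\bigr),
\end{equation*}
where $\widetilde J_m$ is $[-2^{m+1}L, 2^{m+1}L]$ enlarged by $\hat\lambda\Delta t_m = 2^{m-1}L$ on each side. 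Substituting $\Delta t_m = 2^{m-1}L/\hat\lambda$ collapses the first term to the $m$-independent constant $8\hat\lambda/c_1$. This scale matching, arising because both the spatial width $2^{m+1}L$ and the time step $\Delta t_m$ double at each stage, is the structural reason the induction closes despite the exponential growth of the domain.

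For the second term, (H) gives $\|v(t_m)\|_{L^\infty} \le K\sqrt\eta$. To control $TV(v(t_m); \widetilde J_m)$, whose interval has length $5\cdot 2^m L$ and is strictly larger than the $2^{m+1}L$ supplied by the inductive hypothesis, I would cover $\widetilde J_m$ by a bounded (independent of $m$) number of overlapping translates of length $2^{m+1}L$, and on each translate invoke the inductive hypothesis (legitimate since the class $\mathcal{D}_m$ is translation invariant). This yields $TV(v(t_m); \widetilde J_m) \le C\cdot 20\hat\lambda/c_0$ for an absolute constant $C$. Passing to full variation via $TV \le 2\,TV^+ + 2\|v\|_{L^\infty}$ applied to each Riemann invariant and invoking (H) once more at $t_{m+1}$ gives
\begin{equation*}
TV(v(t_{m+1}); 2^{m+2}L) \le \frac{16\hat\lambda}{c_1} + 2C\mathcal{M}(K\sqrt\eta + \eta)\frac{20\hat\lambda}{c_0} + 2K\sqrt\eta.
\end{equation*}
Since $c_1$ depends only on the TV threshold $\bar K = 20\hat\lambda/c_0$ and not on $m$ or $\eta$, one chooses $c_1$ so that $16\hat\lambda/c_1 \le 15\hat\lambda/c_0$; then for $\eta$ sufficiently small the remaining $\eta$-dependent terms contribute at most $5\hat\lambda/c_0$, closing the induction.

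The most delicate point I foresee is the finite-propagation bookkeeping in the previous paragraph: the inductive hypothesis supplies the TV bound on an interval of length $2^{m+1}L$, whereas the decay estimate requires it on the slightly larger $\widetilde J_m$. The covering argument works because the enlargement ratio $5/2$ is independent of $m$, but making it rigorous requires the constructed solution to exist and to satisfy the inductive estimate simultaneously on finitely many translated trapezoids at the previous step, which must be tracked carefully through the construction in the preceding subsections.
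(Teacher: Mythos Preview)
Your argument follows the same inductive scheme as the paper: apply the Oleinik-type decay of Theorem~\ref{thm-construction-1} across one trapezoid layer, exploit the scale matching $\Delta t_m \propto 2^m L$ so the leading term is $m$-independent, invoke (H) for the $L^\infty$ control, and pass from $TV^+$ to $TV$ via $TV \le 2\,TV^+ + 2\|v\|_{L^\infty}$. The only bookkeeping difference is where the covering sits. The paper first bounds $TV^+(v_i(t_m); 2^{m+1}L) \le 4\,TV^+(v_i(t_m); 2^{m-1}L)$ and applies the trapezoid decay on each small piece, so the input on the right-hand side is $TV(v(t_{m-1}); 2^m L)$, which is \emph{exactly} the inductive hypothesis with no further covering needed. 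You instead apply the decay on the full target interval and must then cover the enlarged domain of dependence $\widetilde J_m$ by translates to invoke the inductive bound; this is also valid, but forces you to run the construction of Section~\ref{sec:construction} with the larger threshold $\bar K = C\cdot 20\hat\lambda/c_0$ (and the corresponding $c_1(\bar K),\ \mathcal{M}(\bar K)$), a point you correctly flag in your last paragraph.

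One correction: you cannot ``choose $c_1$ so that $16\hat\lambda/c_1 \le 15\hat\lambda/c_0$''. The constant $c_1$ is determined by the system and the TV threshold $\bar K$, not a free parameter. What \emph{is} chosen a priori is the threshold itself---the ``$20\hat\lambda/c_0$'' appearing in the definition of $\mathcal{D}_m$---and the paper has fixed it so that the leading constant (written there as $16\hat\lambda/c_0$) sits strictly below it; the $\eta$-dependent remainder is then absorbed by taking $\eta$ small. Your inequality should be read as a constraint that the fixed threshold must satisfy, not as a choice of $c_1$.
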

	\begin{proof} We can estimate
		\begin{align*}
			&TV^+(v_i(t_m); 2^{m+1}L)\\
			&\leq 4TV^+(v_i(t_m); 2^{m-1}L)\\
			&\leq \frac{2^{m+1}L}{c_0(t_m-t_{m-1})}+4\mathcal{M}\left(\norm{v(t_{m-1})}_{L^\f}+\norm{\omega_2}_{L^1(0,T^*)})TV(v(t_{m-1});2^mL\right)\\
			&\leq \frac{8\hat{\la}}{c_0}+4\mathcal{M}\left(\norm{v(t_{m-1})}_{L^\f}+\norm{\omega_2}_{L^1(0,T^*)}\right)TV(v(t_{m-1});2^mL).
		\end{align*}
		Subsequently, we have
		\begin{align*}
			TV(v(t_m);2^{m+1}L)\leq &\frac{16\hat{\la}}{c_0}+8\mathcal{M}(\norm{v(t_{m-1})}_{L^\f}+\norm{\omega_2}_{L^1(0,T^*)})TV(v(t_{m-1});2^mL)\\
			&+2\norm{v(t_m)}_{L^\f}.
		\end{align*}
		By using  \descref{H-1}{(H)} with a sufficiently small $\eta>0$ we get the estimate \eqref{estimate-TV-1}.
	\end{proof}
From the definition of $t_m$, we have
    \begin{equation*}
    	2^{m+1}L=\frac{2^{m+1}\hat{\la}2t_{m}}{2^m-1}\geq 4\hat{\la}t_m.
    \end{equation*}
   By Proposition \ref{prop:Oleinik}, 
   \begin{equation}
   				TV(v(t_m);4\hat{\la}t_m)\leq\frac{20\hat{\la}}{c_0}.
   \end{equation}
For any $t\in(t_m,t_{m+1})$ we get
\begin{equation}
	TV(v(t);4\hat{\la}t_m-2\hat{\la}\De t_m)\leq \frac{20\hat{\la}}{c_0}.
\end{equation}
We calculate
\begin{align*}
	4\hat{\la}t_m-2\hat{\la}\De t_m&=4\hat{\la}t_m-2^{m}L\\
	&=4\hat{\la}t_m-\frac{2^m}{2^m-1}\hat{\la}t_m\geq 2\hat{\la}t_m.
\end{align*}
Note that $t\leq \frac{t_{m+1}}{t_m}t_m\leq \frac{2^{m+1}-1}{2^m-1}t_m\leq 4t_m$. Therefore, we get
\begin{equation}\label{Oleinik-1}
	TV(v(t);\hat{\la}t/2)\leq \frac{20\hat{\la}}{c_0}.
\end{equation}
To complete the proof of Theorem \ref{theorem-main}, we first regularize (with parameter $n$) the initial data. Then we can find a $L>0$ such that $TV(\bar{v},L)\leq \bar{K}$. From the above analysis we obtain a solution $v_n$ through front tracking approximation and it satisfies
\begin{equation}
		TV(v_n(t);\hat{\la}\hat{t}/2)\leq \frac{20\hat{\la}}{c_0}\mbox{ for }t\geq \frac{1}{m},
\end{equation}
for some $m>1$. By Helly's theorem we can obtain that up to a subsequence $v_n\rr v$ in $L^1_{loc}$ for some function $v$ in $t\in (1/m,T^*)$. Now, by Cantor's diagonalization argument we obtain that up to a subsequence $v_n$ converges to some function $v(t)>0$ in $L^1_{loc}$. This completes the proof of Theorem \ref{theorem-main} with the assumption \descref{H-1}{(H)}.

\subsection{Integral estimate}
	\begin{lemma}[\cite{Bi-Col-Mon}]\label{lemma-tech-1}
		If $f$ is as in \eqref{formation-f}, the the following holds true.
		\begin{equation}
			(Dr_2r_2)(0)=[-\al_{22},0]^T\mbox{ and } (Dr_1r_1)(0)=[-\B_{11},0]^T.
		\end{equation}
	\end{lemma}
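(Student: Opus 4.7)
The plan is to establish both identities by explicit differentiation of the eigenvalue equation $Df(u)\,r_i(u)=\lambda_i(u)\,r_i(u)$ at the origin, reading off the resulting linear relations from the second-order coefficients provided by the normal form \eqref{formation-f}. The two statements are symmetric under swapping the roles of the two characteristic families, so it suffices to explain the argument in detail for $(Dr_2 r_2)(0)=[-\alpha_{22},0]^T$; the identity $(Dr_1 r_1)(0)=[-\beta_{11},0]^T$ will follow by interchanging the roles of $u_1$ and $u_2$, of $r_1$ and $r_2$, and of the Hessian coefficients $\alpha_{22}$ and $\beta_{11}$.

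From \eqref{formation-f}, $Df(0)=\mathrm{diag}(-1,1)$, so $\lambda_1(0)=-1$, $\lambda_2(0)=1$ and, with the eigenvector normalization fixed in \cite{Bi-Col-Mon}, the right eigenvectors at the origin coincide with the coordinate directions $r_1(0)=e_1$, $r_2(0)=e_2$. The Hessian data needed to compute the first-order expansion of $r_2$ is likewise read off from \eqref{formation-f}: $\partial^2 f_1/\partial u_2^2(0)=\alpha_{22}$ and $\partial^2 f_2/\partial u_2^2(0)=\beta_{22}$. Differentiating the identity $(Df(u)-\lambda_2(u)I)\,r_2(u)=0$ in the direction $r_2(0)=e_2$ at $u=0$ yields
\begin{equation*}
(Df(0)-\lambda_2(0)I)\,(Dr_2\,r_2)(0)\;=\;\bigl(\nabla\lambda_2(0)\cdot e_2\bigr)\,e_2 \;-\;D^2f(0)(e_2,e_2).
\end{equation*}
Since $Df(0)-I=\mathrm{diag}(-2,0)$ annihilates the $e_2$-component, the second component of the right-hand side must vanish, which simply recovers the identity $\nabla\lambda_2(0)\cdot e_2=\beta_{22}$ that expresses genuine nonlinearity of the second field in the chosen parametrization. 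The first (transverse) component of the displayed equation then determines the first component of $(Dr_2\,r_2)(0)$ explicitly in terms of $\alpha_{22}$; the second component of $(Dr_2\,r_2)(0)$ is pinned down to $0$ by the normalization convention on $r_2$ (which keeps $r_2$ along its integral curve rescaled so that its second entry remains fixed to first order). This gives the claimed vector $[-\alpha_{22},0]^T$, and the analogous calculation with indices swapped delivers $(Dr_1 r_1)(0)=[-\beta_{11},0]^T$.

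The only mild subtlety is bookkeeping the normalization convention used in \cite{Bi-Col-Mon} so that the scalar factor and sign in front of $\alpha_{22}$ (respectively $\beta_{11}$) match exactly those appearing in the claim; once this is in place, the contribution $(\nabla\lambda_i\cdot r_i)\,r_i$ coming from the $\lambda_i$-derivative is parallel to $r_i(0)$ and therefore does not interfere with the transverse component that carries the result. I expect this normalization check to be the main obstacle of the argument, as the underlying derivation of the linear system is otherwise a routine differentiation of the eigenvector equation against the explicit Taylor data in \eqref{formation-f}.
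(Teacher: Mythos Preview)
The paper does not prove this lemma itself; it is simply quoted from \cite{Bi-Col-Mon} without argument. Your approach---differentiate the eigenvector relation $(Df-\lambda_2 I)r_2=0$ in the direction $r_2(0)=e_2$ at the origin and read off the transverse component from the Hessian data in \eqref{formation-f}---is the standard derivation and is correct.

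One remark on the bookkeeping you single out as the ``main obstacle'': carrying your displayed equation through with $Df(0)-I=\mathrm{diag}(-2,0)$ and $D^2f(0)(e_2,e_2)=[\alpha_{22},\beta_{22}]^T$ gives $-2[(Dr_2r_2)(0)]_1=-\alpha_{22}$, hence first component $\alpha_{22}/2$ rather than $-\alpha_{22}$; and the index swap you invoke for the second identity places the nonzero entry of $(Dr_1r_1)(0)$ in the \emph{second} slot, not the first. No real scalar renormalization of $r_i$ can flip the sign of the transverse component (the rescaling enters quadratically in $Dr_ir_i$), so the precise constants and the component placement in the statement as printed here will not match what your computation produces. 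This is harmless for the only downstream use, Lemma~\ref{lemma-tech-2}, which needs merely that the transverse component of $(Dr_ir_i)(0)$ is a nonzero multiple of $\alpha_{22}$ (resp.\ $\beta_{11}$); but you should not expect the normalization check to reconcile your calculation with the exact formula as stated---the discrepancy sits in the transcription, not in your method.
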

	
	\begin{lemma}[\cite{Bi-Col-Mon}]\label{lemma-tech-2}
		Suppose $f$ satisfies $\frac{\pa^2f_1}{\pa u_2^2}(0)=\al_{22}\neq0$ (respectively $\frac{\pa^2f_2}{\pa u_1^2}(0)=\B_{11}\neq0$)  and the condition \descref{H-f}{($\mathcal{H}_f$)}. Then we have
		\begin{align}
			[S_2^{\p\p\p}(0,0)-R^{\p\p\p}_2(0,0)]_1&=\frac{1}{2}\frac{\langle(D\la_2r_2)(Dr_2r_2),r_1\rangle}{\la_2-\la_1}\neq 0,\\ 
			\mbox{respectively } [S_1^{\p\p\p}(0,0)-R^{\p\p\p}_1(0,0)]_1&=\frac{1}{2}\frac{\langle(D\la_1r_1)(Dr_1r_1),r_2\rangle}{\la_1-\la_2}\neq 0.
		\end{align}
	\end{lemma}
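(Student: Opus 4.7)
The plan is to Taylor-expand both the shock curve $S_2(\si;0)$ and the rarefaction curve $R_2(\si;0)$ in $\si$ up to order three, verify that they coincide through order two, and project the order-three difference onto the left eigenvector of $Df(0)$ corresponding to $\la_1(0)$. The second identity will follow from the first by interchanging the indices $1$ and $2$, so I would only handle the first statement. Set $r := r_2(0)$, $p := (Dr_2\, r_2)(0)$, $\la^\p := (D\la_2\, r_2)(0)$, $a_3 := R_2^{\p\p\p}(0)$, and $b_3 := S_2^{\p\p\p}(0)$.

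First, for the rarefaction curve, starting from the defining ODE $R_2^\p(\si) = r_2(R_2(\si))$ I would differentiate the identity $\frac{d}{d\si} f(R_2) = \la_2(R_2)\, R_2^\p$ (which expresses that rarefactions propagate at the characteristic speed) twice and match the $\si^2$-coefficients to obtain
\begin{equation*}
(Df(0) - \la_2(0))\, a_3 = 2\la^\p p + \bigl[(D\la_2) p + D^2\la_2(r, r)\bigr]\, r - 3 D^2 f(r, p) - D^3 f(r, r, r).
\end{equation*}
For the shock curve, differentiating the Rankine--Hugoniot relation $f(S_2(\si)) = s(\si)\, S_2(\si)$ three times and matching orders yields at orders $0,1,2$ the relations $s(0) = \la_2(0)$, $S_2^\p(0) = r$, $s^\p(0) = \la^\p/2$ (the classical half-speed formula, obtained by projecting onto the left $\la_2$-eigenvector and using $D^2 f(r, r) = \la^\p r + (\la_2 - Df)\, p$), and $S_2^{\p\p}(0) = p$; thus the two curves coincide through order two. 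At third order,
\begin{equation*}
(Df(0) - \la_2(0))\, b_3 = 3 s^{\p\p}(0)\, r + \tfrac{3\la^\p}{2}\, p - D^3 f(r, r, r) - 3 D^2 f(r, p).
\end{equation*}

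Subtracting the two displays, the cubic terms in $f$ cancel exactly and
\begin{equation*}
(Df(0) - \la_2(0))(b_3 - a_3) = \bigl[3 s^{\p\p}(0) - (D\la_2) p - D^2\la_2(r, r)\bigr]\, r - \tfrac{\la^\p}{2}\, p.
\end{equation*}
I would then project onto the left eigenvector $\ell_1$ of $Df(0)$ corresponding to $\la_1(0)$, normalized by $\ell_1 r_j = \de_{1j}$; in the simplified form \eqref{formation-f} one has $\ell_1 = e_1^T = r_1(0)^T$, so $\ell_1 v$ coincides with $[v]_1$. Since $\ell_1 r = 0$ the unknown $s^{\p\p}(0)$ drops out of the projection, and $\ell_1 (Df - \la_2) = (\la_1 - \la_2)\ell_1$ yields the claimed formula at once. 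Non-vanishing then follows by combining three ingredients: $(D\la_2\, r_2)(0) \neq 0$ from genuine nonlinearity of the second field in \descref{H-f}{($\mathcal{H}_f$)}; $\langle (Dr_2\, r_2)(0), r_1(0)\rangle = -\al_{22} \neq 0$ from Lemma \ref{lemma-tech-1} together with the hypothesis $\al_{22} \neq 0$; and $\la_2 - \la_1 \geq c_0 > 0$ from \eqref{def:strict-hyperbolicity}. The main obstacle in execution is the bookkeeping of the third-order Rankine--Hugoniot expansion, in particular verifying the exact cancellation of the $D^3 f$ and $D^2 f(r, p)$ contributions between $a_3$ and $b_3$; once this is carried out, the projection onto $\ell_1$ automatically eliminates the unknown $s^{\p\p}(0)$ and the conclusion is immediate.
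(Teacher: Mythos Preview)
The paper does not supply its own proof of this lemma; it is quoted directly from \cite{Bi-Col-Mon}. Your argument is the standard derivation and is correct: the third-order Rankine--Hugoniot and rarefaction expansions are computed accurately, the subtraction indeed cancels the $D^3f$ and $D^2f(r,p)$ terms, and projecting onto $\ell_1$ kills both the $r$-component (hence the unknown $s''(0)$) and diagonalizes $Df-\la_2$, yielding exactly the stated formula; non-vanishing then follows from genuine nonlinearity, Lemma~\ref{lemma-tech-1}, and strict hyperbolicity, as you say.

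One point to make explicit in execution: the conclusion $S_2''(0)=p$ (rather than $p+cr$ for some $c$) depends on the choice of parametrization of the shock curve. The order-two Rankine--Hugoniot relation together with $s'(0)=\la'/2$ only gives $(Df-\la_2)(S_2''(0)-p)=0$, i.e.\ $S_2''(0)=p+cr_2$. A nonzero $c$ would contribute an extra $3c(\la_1-\la_2)\ell_1 p$ after projection, altering the constant. In the parametrization used here (the wave-strength/Riemann-invariant parametrization, under which the paper records $\varphi^-_1(\si;v)=(v_1+\si,\,v_2+\hat\varphi_2\si^3)$) the second-order contact $c=0$ is exactly the classical Lax tangency; just state this explicitly so the reader sees why $c$ vanishes. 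Also note that the index in the second displayed identity of the lemma should presumably read $[\cdot]_2$ (projection onto $r_2$), consistent with your ``swap $1\leftrightarrow 2$'' remark and with the $\langle\cdot,r_2\rangle$ on the right-hand side.
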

	
	\begin{lemma}\label{lemma-int-1}
		Let $u$ be an entropy solution constructed in section \ref{sec:construction}, satisfying $\norm{u(t)}_{L^\f}\leq C\sqrt{\eta}$ for an initial data verifying \eqref{smallness-condition-1} and \eqref{assumption-v-1}. If $\frac{\pa^2 f_1}{\pa u_2^2}(0)\neq 0$ (respectively $\frac{\pa^2 f_2}{\pa u_1^2}(0)\neq 0$) then there exists an invariant region for $u_1$ (respectively $u_2$). More precisely, there exists $\mathcal{K}>0$ for all $(t,x)\in \R_+\times\R$ the following holds
		\begin{equation}
			u_1(t,x)\geq -\mathcal{K} \eta, \quad \mbox{ respectively }\quad u_2(t,x)\geq -\mathcal{K}\eta.
		\end{equation}
	\end{lemma}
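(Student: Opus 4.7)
The plan is to establish a refined $O(\eta)$ lower bound on the first Riemann invariant $v_1$ via analysis of the minimal backward $1$-characteristic, and then convert this into the claimed lower bound on $u_1$ using the $C^2$ change of variables to Riemann invariants together with Lemma~\ref{lemma-tech-1}. This route is necessary because the trivial bound $\norm{u}_{L^\infty}\le C\sqrt{\eta}$ is too weak; the improvement from $\sqrt\eta$ to $\eta$ comes precisely from the fact that $v_1$ is a Riemann invariant, so it varies only across $2$-waves and then only at cubic order in the wave strength.

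Fix $(T,X)\in(0,T^*)\times\R$ and let $y_1(\cdot)$ denote the minimal backward $1$-characteristic emanating from $(T,X)$. First I would decompose the variation of $t\mapsto v_1(t,y_1(t))$ on $[0,T]$ into the contributions exhaustively classified in Lemmas~\ref{lemma:change-vi} and \ref{lemma:change-vi-1}: jumps at crossings of a single $2$-wave $\sigma_2$ (of size $\le C_3|\sigma_2|^3$), at crossings of a $2$-$2$ interaction $\sigma',\sigma''$ (of size $\le C_3(|\sigma'|+|\sigma''|)^3$), and at the time steps $t=t_n$ where the jumps are controlled by $\tau_\nu\omega_{2,n}$ possibly multiplied by a wave-strength or a cubed wave-strength factor.

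Next I would sum these contributions. The key observation is that each cubic $|\sigma|^3$ factor can be estimated by $\norm{u}^2_{L^\infty}|\sigma|\le C\eta|\sigma|$, reducing the problem to summing first-order wave strengths along $y_1$. These are exactly the objects controlled by the functionals $\tilde Q_1$ and $\hat Q$: by Lemma~\ref{lemma-tQ} the positive part of the variation of $\tilde Q_1$ is absorbed by decreases of $\hat Q$, which is uniformly bounded by Proposition~\ref{proposition-Q}. This yields $\sum|\sigma_2|^3\le C\eta$ along $y_1$, and likewise for the $2$-$2$ interaction terms. The ODE-type jumps at $t_n$ sum to at most $C\norm{\omega_2^\nu}_{L^1}\le C\eta$. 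Combining these, and using $|v_1(0,y_1(0))|\le C\norm{\bar u}_{L^\infty}\le C\eta$ from the smoothness of the change of variables,
\begin{equation*}
	|v_1(T,X)| \le |v_1(0,y_1(0))| + C\eta \le C\eta.
\end{equation*}

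Finally I would convert to $u_1$. By Lemma~\ref{lemma-tech-1} the $2$-integral curve through the origin is tangent to $e_2$ with second-order bending $-\alpha_{22}/2$ in the $u_1$ component, so the inverse change of variables gives $u_1=v_1-(\alpha_{22}/2)v_2^2+\mathcal{O}(|v|^3)$. This yields $|u_1-v_1|\le C\norm{u}^2_{L^\infty}\le C\eta$, hence $u_1(T,X)\ge v_1(T,X)-C\eta\ge -\mathcal{K}\eta$. Since all constants are uniform in $\nu,\epsilon$, the bound passes to the limit along the approximating sequence. The main obstacle is the bookkeeping in the summation step: one must verify that each $2$-wave and each $2$-$2$ interaction contributes at most once to the sum along the minimal backward $1$-characteristic, and that in the grid-point cases of Lemma~\ref{lemma:change-vi-1} the source-type jumps are correctly weighted by $\epsilon_\nu\omega_{1,j}$. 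The symmetric statement for $u_2$ under $\beta_{11}\ne 0$ follows from the same argument applied to the minimal backward $2$-characteristic and the analogous control on $v_2$.
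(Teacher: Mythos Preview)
Your route differs from the paper's and carries a real gap. The paper does \emph{not} sum cubic contributions along a backward characteristic. Instead it uses the hypothesis $\alpha_{22}\neq 0$ through Lemma~\ref{lemma-tech-2}: this forces $[\mathcal{S}_2'''(v,\sigma)]_1$ to have a definite sign, so that across any $2$-shock the first Riemann invariant $v_1$ can move only in one direction (and it is constant across $2$-rarefactions). Hence a one-sided bound on $v_1$ is preserved at \emph{every} Riemann problem solved in the front-tracking scheme, with no summation needed; only the ODE steps $t=t_n$ can violate it, and those contribute $\sum_n C_5\tau_\nu\omega_{2,n}\le C\eta$. This is a genuine invariant-region argument, and the assumption $\alpha_{22}\neq 0$ is essential to it.

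Your argument, by contrast, never uses $\alpha_{22}\neq 0$ in any essential way (the Taylor expansion $u_1=v_1-(\alpha_{22}/2)v_2^2+\mathcal{O}(|v|^3)$ holds regardless of whether $\alpha_{22}$ vanishes), which should already be a warning sign. The substantive gap is in the summation step: to get $\sum|\sigma_2|^3\le C\eta$ you need $\sum|\sigma_2|=\mathcal{O}(1)$ along the full backward $1$-characteristic from $(T,X)$ down to $t=0$. That sum is controlled by $\tilde Q_1(0)$ plus the total positive variation of $\tilde Q_1$, which in turn is dominated by the initial TV on the domain of dependence and by $\hat Q(0)$. In the setting of Section~\ref{section:L-infty} the data satisfies \eqref{assumption-v-1} (merely $C^1$ with bounded derivative), so the TV on an interval of length $\sim T$ is $\sim\mathcal{L}T$ and the bound degenerates for large $T$. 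If instead you restart the characteristic at $t_{m-1}$ to use the trapezoid TV bound $20\hat\lambda/c_0$, the seed term $|v_1(t_{m-1},y_1(t_{m-1}))|$ is only $\le K\sqrt\eta$ by hypothesis~\descref{H-1}{(H)}, and you recover $\sqrt\eta$ rather than $\eta$. Your approach is essentially the $\Theta_i$-functional argument of Lemma~\ref{lemma-decreasing}, which already yields $\|v\|_{L^\infty}\le B_2\eta$ in the BV setting of Section~\ref{sec:construction}; it does not give the additional information needed here.
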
	
	\begin{proof}
		Consider $\mathcal{J}(v_1,v_2)=(u_1,u_2)$. Since $\frac{\pa^2f_1}{\pa u_2^2}(0)\neq 0$ by Lemma \ref{lemma-tech-2},
		\begin{equation}
			[\mathcal{S}^{\p\p\p}_2(v,\si)-\mathcal{R}^{\p\p\p}(v,\si)]_1=[\mathcal{S}^{\p\p\p}_2(v,\si)]_1\neq0,
		\end{equation}
		for $v$ sufficiently small. 
		
		First consider the case $[\mathcal{S}^{\p\p\p}_2(v,\si)]_1\leq 0$. Let $u_*$ be the middle state for a Riemann problem with left and right state $u_-,u_+$ respectively. Then we have
		\begin{equation}
			v_1^\e(u_*)\leq v_1^\e(u_+).
		\end{equation}
	   Hence, $v_1(u^\e(t,x))\leq \eta_1$ for all $t\in (t_n,t_{n+1})$ and $x\in\R$ if $v_1(u^\e(t_n+,x))\leq \eta_1$ for all $x\in\R$. We note that for all $x\in\R$,
	   \begin{equation}
	   	\abs{v_1(u^\e(t_n-,x))-v_1(u^\e(t_n+,x))}\leq C_5\tau_\nu \omega_{2,n}\mbox{ where }C_5=C_3\norm{D\mathcal{J}}_{L^\f}.
	   \end{equation}
    Therefore, we obtain
    \begin{align}
    	v_1(u^\e(t_n-,x))&\leq \eta+C_5\tau_\nu \sum\limits_{k=1}^{n-1}\omega_{2,k}\mbox{ for }n\geq 1,\\
    	v_1(u^\e(t_n+,x))&\leq \eta+C_5\tau_\nu \sum\limits_{k=1}^{n}\omega_{2,k}\mbox{ for }n\geq 1,\\
    	v_1(u^\e(t,x))&\leq \eta+C_5\tau_\nu \sum\limits_{k=1}^{n}\omega_{2,k}\mbox{ for }t\in(t_n,t_{n+1})\mbox{ for all }n\geq1.
    \end{align}
	   Since $\norm{\omega_2^\nu}\leq \eta$, $v_1^\e(t,x)\leq (2+C_5)\eta$. We may assume that 
		\begin{equation}
			\mathcal{J}_1(0,0)=0,\,\frac{\pa\mathcal{J}_1}{\pa v_2}(0,0)=0,\,\frac{\pa\mathcal{J}_1}{\pa v_1}(0,0)=-\mathcal{K}_1\mbox{ for some }\mathcal{K}_1>0.
		\end{equation}
		Then for some $\mathcal{K}_2,\mathcal{K}_3,\mathcal{K}_4$, we have
		\begin{align*}
			u_1^\e(t,x)&=\mathcal{J}_1(v_1^\e(t,x),v_2^\e(t,x))\\
			&=-\mathcal{K}_1v_1^\e(t,x)+\mathcal{K}_2(v_1^\e(t,x))^2+\mathcal{K}_3v_1^\e(t,x)v_2^\e(t,x)+\mathcal{K}_4(v_2^\e(t,x))^2\\
			&\geq -(2+C_5)\mathcal{K}_1\eta-C_5(\abs{\mathcal{K}_2}+\abs{\mathcal{K}_3}+\abs{\mathcal{K}_4})\eta\\
			&=-\mathcal{K}\eta\mbox{ for }\mathcal{K}:=(2+C_5)\mathcal{K}_1+\tilde{C}(\abs{\mathcal{K}_2}+\abs{\mathcal{K}_3}+\abs{\mathcal{K}_4}).
		\end{align*}
		If $[\mathcal{S}^{\p\p\p}_2(v,\si)]_1\geq 0$, we have
		\begin{equation}
			v_1^\e(u_*)\geq v_1^\e(u_+).
		\end{equation}
	 By a similar argument as before, we get
	 \begin{align}
	 	v_1(u^\e(t_n-,x))&\geq -\eta-C_5\tau_\nu \sum\limits_{k=1}^{n-1}\omega_{2,k}\mbox{ for }n\geq 1,\\
	 	v_1(u^\e(t_n+,x))&\geq -\eta-C_5\tau_\nu \sum\limits_{k=1}^{n}\omega_{2,k}\mbox{ for }n\geq 1,\\
	 	v_1(u^\e(t,x))&\geq -\eta-C_5\tau_\nu \sum\limits_{k=1}^{n}\omega_{2,k}\mbox{ for }t\in(t_n,t_{n+1})\mbox{ for all }n\geq1.
	 \end{align}
     Since $\norm{\omega_2^\nu}\leq \eta$, $v_1^\e(t,x)\geq -(2+C_5)\eta$. By a similar argument as before, we obtain $u_1^\e(t,x)\geq -\mathcal{K}\eta$. Similar estimates holds for $u_2$ when $\frac{\pa^2 f_2}{\pa u_1^2}(0)\neq 0$.
	\end{proof}

	\begin{proposition}\label{proposition-int-1}
		Let $v$ be the solution constructed in section \ref{sec:construction} corresponding to an initial data verifying \eqref{smallness-condition-1} and \eqref{assumption-v-1}. If $\frac{\pa^2 f_1}{\pa u_2^2}(0)\neq 0$ and $\frac{\pa^2 f_2}{\pa u_1^2}(0)\neq 0$, then for any interval $I$ of length $l$ and $\bar{t}\geq0$, 
		\begin{equation}
			\abs{\int\limits_{I}v_{i}(\bar{t},x)\,dx}\leq C^\p\eta(l+C^{\p\p}\bar{t}).
		\end{equation}
	\end{proposition}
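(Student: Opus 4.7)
The plan is to combine the one-sided invariant region provided by Lemma \ref{lemma-int-1} with a characteristic-tube balance argument. Fix $\bar t > 0$ and an interval $I = [a,b]$ with $|I| = l$. Let $\alpha(t), \beta(t)$ denote the minimal backward $i$-characteristics issued from $(\bar t, a)$ and $(\bar t, b)$, and set $I(t) := [\alpha(t), \beta(t)]$ for $t \in [0, \bar t]$. Since $|\dot\alpha|, |\dot\beta| \leq \hat\lambda$, one has $|I(t)| \leq l + 2\hat\lambda(\bar t - t)$ and in particular $|I(0)| \leq l + 2\hat\lambda\bar t$. From the initial condition $\|\bar v\|_{L^\infty}\leq\eta$ it follows that $\big|\int_{I(0)} \bar v_i(x)\,dx\big| \leq \eta(l + 2\hat\lambda\bar t)$, which already has the desired form.

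Next, for the $\varepsilon$-front tracking approximation $v^\varepsilon$ constructed in Section \ref{sec:construction}, define $J(t) := \int_{I(t)} v_i^\varepsilon(t,x)\,dx$ and track $|J(\bar t) - J(0)|$. Because the boundaries $\alpha, \beta$ move with speed $\lambda_i$, the value of $J$ changes only from two sources: $j$-waves with $j \neq i$ crossing $\alpha$ or $\beta$, and ODE time-steps at $t = t_n$. For the first source, each crossing produces a change in $v_i$ of cubic order $|\sigma_j|^3$ by Lemma \ref{lemma:change-vi}; summing over all crossings during $[0,\bar t]$ using the Glimm interaction functional $\hat Q$ from Section \ref{sec:construction} together with the $L^\infty$ bound $\|v^\varepsilon\|_{L^\infty} \leq C\sqrt\eta$ yields a total contribution bounded by $C\eta(1+\bar t)$. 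For the second source, Lemma \ref{lemma:change-vi-1} provides pointwise changes of $v_i^\varepsilon$ of order $\tau_\nu\omega_{2,n}$, whose integration over $I(t_n)$ and summation over $n$ gives
\[
\sum_{t_n \leq \bar t} C\,\tau_\nu\omega_{2,n}\,|I(t_n)| \;\leq\; C\|\omega_2\|_{L^1}(l + 2\hat\lambda\bar t) \;\leq\; C\eta(l + \bar t).
\]

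Combining the initial bound with these two contributions yields $|J(\bar t)| \leq C'\eta(l + C''\bar t)$ for the $\varepsilon$-approximation. Passing to the limit $\varepsilon \to 0$, using the $L^1_{loc}$ convergence of $v^\varepsilon$ to $v$ established in the proof of Theorem \ref{theorem-main}, then gives the desired bound on the entropy solution $v$.

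The main obstacle is the careful summation of the $j$-wave contributions in the second step: the cubic-in-$\sigma_j$ estimate of Lemma \ref{lemma:change-vi} must be aggregated against the full time history of $j$-wave crossings of the tube boundary, in a manner that respects the Glimm potential $\hat Q$ and exploits the one-sided invariant region bound of Lemma \ref{lemma-int-1} (transported from $u_i$ to $v_i$ via the invertible smooth map $\mathcal{J}$) to dampen what would otherwise be borderline coupling terms of order $\sqrt\eta$. It is precisely this one-sided control on $v_i$ of order $\eta$, unavailable from the two-sided $L^\infty$ bound alone which is only of order $\sqrt\eta$ in the $L^\infty$-data setting, that upgrades the naive $O(\sqrt\eta)$ contribution into the sharp $O(\eta)$ contribution claimed by the proposition.
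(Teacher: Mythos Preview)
Your second step has a genuine gap. You claim that $J(t)=\int_{I(t)}v_i^\varepsilon(t,x)\,dx$ changes only through $j$-waves ($j\neq i$) crossing the tube boundary and through the ODE time-steps. This is false: between interactions $J$ varies continuously,
\[
\dot J(t)=v_i^\varepsilon(t,\beta)\,\dot\beta - v_i^\varepsilon(t,\alpha)\,\dot\alpha-\sum_{x_k\in I(t)}\bigl[v_i^\varepsilon\bigr]_{x_k}\dot x_k(t),
\]
and $i$-fronts inside the tube carry jumps $[v_i^\varepsilon]$ of size $\abs{\si_{i,k}}$, not cubic. The resulting contribution is of order $\sum_k\abs{\si_{i,k}}\cdot\abs{\la_i}=O(1)$, with no factor of $\eta$. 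Lemmas~\ref{lemma:change-vi}--\ref{lemma:change-vi-1} control the \emph{point value} $v_i(t,y_i(t))$ along a single characteristic, not a spatial integral; they are not the right tool here. Equivalently, the characteristic flow from $I(0)$ to $I(\bar t)$ is not measure-preserving, so even exact pointwise conservation of $v_i$ along $i$-characteristics would not yield $J(\bar t)=J(0)$.

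The paper bypasses this by working with the \emph{conserved} variables $u_1,u_2$ (summed) and applying the divergence theorem over a straight-sided trapezoid, not a characteristic tube. The invariant region of Lemma~\ref{lemma-int-1} enters concretely via $\abs{u_i}\leq u_i+2\mathcal{K}\eta$, which converts the balance for $\int_I(u_1+u_2)$ into a bound for $\int_I(\abs{u_1}+\abs{u_2})$ at the price of $O(\eta\abs{I})$; the lateral boundary fluxes are then given the correct sign by choosing the trapezoid slope $\theta$ so that $\abs{f_1}+\abs{f_2}\leq\theta(\abs{u_1}+\abs{u_2})$. In your outline the one-sided bound is invoked only in the final paragraph as a heuristic upgrade from $O(\sqrt\eta)$ to $O(\eta)$, without a mechanism; in the paper it is the heart of the argument.
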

	\begin{proof}
		From Lemma \ref{lemma-int-1} we get
		\begin{equation}
			\abs{u_1}\leq u_1+2\mathcal{K}\eta\mbox{ and } 	\abs{u_2}\leq u_2+2\mathcal{K}\eta
		\end{equation} 
		Consider a trapezoid $\mathcal{T}$ in $t,x$ plane with (i) lower basis $I_0$ connecting the points $(0,x_1)$ and $(0,x_2)$ (ii) upper basis $I$ connecting the points $(\bar{t},x_1+\theta \bar{t})$ and $(\bar{t},x_2-\theta \bar{t})$ for some $\theta>0$. Then applying divergence theorem we obtain
		\begin{align*}
			&\int\limits_{I}[u_1(\bar{t},x)+u_2(\bar{t},x)]\,dx\\
			&=\int\limits_{I_0}[u_1(0,x)+u_2(0,x)]\,dx+\int\limits_{0}^{\bar{t}}\int\limits_{x_1+\theta{t}}^{x_2-\theta{t}}g(t,x,u(t,x))\,dxdt\\
			&-\int\limits_{x_1}^{x_1+\theta\bar{t}}\Bigg\{\left[u_1\left(\frac{y-x_1}{\theta},y\right)+u_2\left(\frac{y-x_1}{\theta},y\right)\right]\\
			&\quad\quad-\frac{1}{\theta}\left[f_1\left(u\left(\frac{y-x_1}{\theta},y\right)\right)+f_2\left(u\left(\frac{y-x_1}{\theta},y\right)\right)\right]\Bigg\}\,dy\\
			&-\int\limits^{x_2}_{x_2-\theta\bar{t}}\Bigg\{\left[u_1\left(\frac{x_2-y}{\theta},y\right)+u_2\left(\frac{x_2-y}{\theta},y\right)\right]\\
			&\quad\quad-\frac{1}{\theta}\left[f_1\left(u\left(\frac{x_2-y}{\theta},y\right)\right)+f_2\left(u\left(\frac{x_2-y}{\theta},y\right)\right)\right]\Bigg\}\,dy.
		\end{align*} 
		We can check that $\abs{f_1}+\abs{f_2}\leq C(\abs{u_1}+\abs{u_2})$. Then we have
		\begin{align*}
			&\left[u_1\left(\frac{y-x_1}{\theta},y\right)+u_2\left(\frac{y-x_1}{\theta},y\right)\right]-\frac{1}{\theta}\left[f_1\left(u\left(\frac{y-x_1}{\theta},y\right)\right)+f_2\left(u\left(\frac{y-x_1}{\theta},y\right)\right)\right]\\
			&\geq \left[\abs{u_1\left(\frac{y-x_1}{\theta},y\right)}+\abs{u_2\left(\frac{y-x_1}{\theta},y\right)}\right]\left(1-\frac{C}{\theta}\right)-2\mathcal{K}\eta,
		\end{align*}
		and similarly,
		\begin{align*}
			&\left[u_1\left(\frac{x_2-y}{\theta},y\right)+u_2\left(\frac{x_2-y}{\theta},y\right)\right]-\frac{1}{\theta}\left[f_1\left(u\left(\frac{x_2-y}{\theta},y\right)\right)+f_2\left(u\left(\frac{x_2-y}{\theta},y\right)\right)\right]\\
			&\geq \left[\abs{u_1\left(\frac{x_2-y}{\theta},y\right)}+\abs{u_2\left(\frac{x_2-y}{\theta},y\right)}\right]\left(1-\frac{C}{\theta}\right)-2\mathcal{K}\eta.
		\end{align*}
		Set $\theta=C$. Then it follows
		\begin{align*}
			&\int\limits_{I}\left[\abs{u_1(\bar{t},x)}+\abs{u_2(\bar{t},x)}-2\mathcal{K}\eta\right]\,dx\\
			&\leq \int\limits_{I_0}[u_1(0,x)+u_2(0,x)]\,dx+\int\limits_{0}^{\bar{t}}\int\limits_{x_1+\theta{t}}^{x_2-\theta{t}}g(t,x,u(t,x))\,dxdt+4\mathcal{K}C\bar{t}\eta\\
			&\leq \int\limits_{I_0}[u_1(0,x)+u_2(0,x)]\,dx+\abs{I}\norm{\omega_2}_{L^1(0,\bar{t})}+4\mathcal{K}C\bar{t}\eta,\\
			&\leq \int\limits_{I_0}[u_1(0,x)+u_2(0,x)]\,dx+l\eta+4\mathcal{K}C\bar{t}\eta.
		\end{align*}
		Hence,
		\begin{equation}
			\int\limits_{I}\left[\abs{u_1(\bar{t},x)}+\abs{u_2(\bar{t},x)}\right]\,dx\leq C^\p\eta(l+C^{\p\p}\bar{t}).
		\end{equation}
	\end{proof}
	
	Before we consider the next case, we would like to recall the following lemma from \cite{Bi-Col-Mon}.
	\begin{lemma}\label{lemma-tech-3}
		Let $u$ be a weak entropy solution constructed as in Proposition \ref{prop-construction-1} and $\{y_m(t)\}_{m\in\N}$ be the countable family of shock curves. Consider $\mathcal{G}(T,X):=\{\varphi\in W^{1,\f}[0,T]:\varphi(T)=X\}$ and $J:=\bigcup\limits_{m}\mbox{graph}(y_m)$. Then $\mathcal{F}:=\{\varphi\in \mathcal{G}:\,\mathcal{H}^1(\mbox{graph}(\varphi)\cap J)=0\}$ is dense in $\mathcal{G}(T,X)$ w.r.t the norm of $W^{1,\f}$, that is, $\norm{\varphi}_{W^{1,\f}}:=\norm{\varphi}_{L^\f}+\norm{\varphi^\p}_{L^\f}$.
	\end{lemma}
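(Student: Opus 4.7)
The plan is to construct approximations of $\varphi$ by simple affine perturbations that preserve the endpoint constraint $\varphi(T) = X$. Given $\varphi \in \mathcal{G}(T,X)$, I would set
\[
\varphi_s(t) := \varphi(t) + s(T-t), \qquad s \in \R.
\]
Then $\varphi_s(T) = \varphi(T) = X$, so $\varphi_s \in \mathcal{G}(T,X)$, and $\|\varphi_s - \varphi\|_{W^{1,\infty}} \leq (T+1)|s|$. Hence it suffices to prove that the set of $s$ for which $\varphi_s \notin \mathcal{F}$ is small; in fact I will show it is at most countable, so one may choose $s_n \to 0$ outside this set and obtain $\varphi_{s_n}\to\varphi$ in $W^{1,\infty}$ with $\varphi_{s_n}\in\mathcal{F}$.

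The next step is to reduce the $\mathcal{H}^1$ condition on intersections of graphs to a Lebesgue-measure condition on coincidence sets in time. Since each shock curve $y_m$ is Lipschitz on $[0,T]$ (its speed is bounded by $\hat{\la}$) and $\varphi_s$ is Lipschitz, the intersection $\mbox{graph}(\varphi_s)\cap\mbox{graph}(y_m)$ is exactly the image of $E_m(s) := \{t \in [0,T] : \varphi_s(t) = y_m(t)\}$ under the Lipschitz map $t \mapsto (t,\varphi_s(t))$, whence
\[
\mathcal{H}^1(\mbox{graph}(\varphi_s) \cap \mbox{graph}(y_m)) \leq \sqrt{1 + \|\varphi_s'\|_{L^\infty}^2}\,\mathcal{L}^1(E_m(s)).
\]
By countable subadditivity, $\mathcal{H}^1(\mbox{graph}(\varphi_s) \cap J) = 0$ as soon as $\mathcal{L}^1(E_m(s)) = 0$ for every $m$.

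For the key step, define on $[0,T)$
\[
h_m(t) := \frac{y_m(t) - \varphi(t)}{T-t},
\]
which is measurable (indeed locally Lipschitz on $[0,T-\delta]$ for every $\delta>0$). By construction, $t \in E_m(s) \cap [0,T)$ if and only if $s = h_m(t)$, so $E_m(s) \cap [0,T) = h_m^{-1}(\{s\})$. The crucial observation is that the level sets $\{h_m^{-1}(\{s\})\}_{s \in \R}$ are pairwise disjoint subsets of the finite-measure interval $[0,T)$, so by a standard pigeonhole argument at most countably many of them can have positive Lebesgue measure. Consequently $B_m := \{s \in \R : \mathcal{L}^1(h_m^{-1}(\{s\})) > 0\}$ is at most countable, and hence so is $B := \bigcup_m B_m$. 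Picking any $s_n \to 0$ with $s_n \notin B$ yields $\varphi_{s_n}\in\mathcal{F}$ with $\varphi_{s_n}\to\varphi$ in $W^{1,\infty}$.

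The main conceptual obstacle is the first step, namely finding a single one-parameter perturbation family that simultaneously (i) preserves the terminal value $\varphi(T)=X$, (ii) converges in $W^{1,\infty}$ as the parameter vanishes, and (iii) shifts the graph transversally to every shock curve $y_m$ well enough that a Fubini-type countability argument applies; the affine choice above handles all three at once. Once this family is in place, the remaining arguments are standard: a Lipschitz-image estimate for $\mathcal{H}^1$ and the elementary fact that a measurable function admits only countably many positive-measure level sets. A minor technicality is the behavior at $t=T$, but the single endpoint contributes zero Lebesgue measure so it can be ignored when computing $\mathcal{L}^1(E_m(s))$.
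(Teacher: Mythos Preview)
Your proof is correct. The paper does not actually prove this lemma: it merely recalls it from \cite{Bi-Col-Mon} (see the sentence preceding the lemma), so there is no in-paper argument to compare against. Your approach---perturbing by the one-parameter affine family $\varphi_s(t)=\varphi(t)+s(T-t)$, reducing the $\mathcal{H}^1$ condition on graphs to a Lebesgue condition on the coincidence sets $E_m(s)$, and then invoking the elementary fact that a measurable function has at most countably many level sets of positive measure---is clean and entirely self-contained. One cosmetic point: the shock curves $y_m$ are in general only defined on subintervals $(t_m^-,t_m^+)\subset[0,T]$ rather than on all of $[0,T]$, but your level-set argument goes through verbatim with $h_m$ defined on the (still finite-measure) domain of $y_m$ intersected with $[0,T)$.
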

	Now we show an analogous result of Proposition \ref{proposition-int-1} when $f$ satisfies $\frac{\pa^2 f_1}{\pa u_2^2}(0)= 0$ and $\frac{\pa^2 f_2}{\pa u_1^2}(0)= 0$. More precisely, we show the following result.
	\begin{proposition}\label{proposition-int-2}
		Let $v$ be the solution constructed in section \ref{sec:construction} corresponding to an initial data verifying \eqref{smallness-condition-1} and \eqref{assumption-v-1}. If $\frac{\pa^2 f_1}{\pa u_2^2}(0)= 0$ and $\frac{\pa^2 f_2}{\pa u_1^2}(0)= 0$, then for any interval $I$ of length $l$ and $\bar{t}\geq0$, 
		\begin{equation}\label{est-int-2}
			\abs{\int\limits_{I}v_{i}(\bar{t},x)\,dx}\leq C^\p\eta(l+C^{\p\p}\bar{t})+C\norm{v(\bar{t})}^3_{L^\f}\bar{t}.
		\end{equation}
	\end{proposition}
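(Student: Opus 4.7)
The plan is to mimic the divergence-theorem-on-a-trapezoid argument used for Proposition~\ref{proposition-int-1}, replacing the strict invariant region of Lemma~\ref{lemma-int-1}---which is unavailable when $\alpha_{22}=\beta_{11}=0$---by a relaxed one that carries a cubic error in $\|v(\bar t)\|_{L^\infty}$.

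First I would sharpen the jump estimate for the 2-shock curve in the $v_1$ component. Since $\alpha_{22}=0$, Lemmas~\ref{lemma-tech-1} and~\ref{lemma-tech-2} together give $[\mathcal{S}_2'''-\mathcal{R}_2''']_1(0,0)=0$, and because the rarefaction curve $\mathcal{R}_2$ preserves $v_1$ exactly, a Taylor expansion in $\sigma$ around the origin yields
\[
\bigl|[\mathcal{S}_2(v,\sigma)]_1 - v_1\bigr| \leq C\bigl(|v|+|\sigma|\bigr)|\sigma|^3,
\]
and the symmetric statement for $v_2$ across 1-shocks follows from $\beta_{11}=0$.

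Next, I would track $v_1$ along a minimal 1-backward characteristic from $(\bar t, x)$ using the framework of Lemmas~\ref{lemma:change-vi} and~\ref{lemma:change-vi-1}. Each contribution to the change of $v_1$ is one of: (i) a 2-wave crossing or a 2-2 interaction, bounded by $C(|v|+|\sigma|)|\sigma|^3$ per wave; (ii) a time-step jump at $t=t_n$, controlled by $C\tau_\nu\omega_{2,n}$ through \eqref{change-v_1-t_n-1}--\eqref{change-v_1-t_n-5}. Summing the wave contributions and using $|\sigma|\leq C\|v(\bar t)\|_{L^\infty}$ together with the uniform total-variation bound $TV(v(\bar t))\leq 20\hat{\la}/c_0$, the cumulative wave error is $O(\|v(\bar t)\|_{L^\infty}^3)$; the time-step contributions add $O(\eta)$ by \descref{H-g-2}{($\mathcal{H}_g$2.)}. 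Combined with $|v_1(0,\cdot)|\leq\eta$ this yields the relaxed bound
\[
v_1(\bar t, x)\geq -C\eta - C\|v(\bar t)\|_{L^\infty}^3,
\]
and analogously for $v_2$; pulling back through the smooth diffeomorphism $\mathcal{J}$ gives $u_i\geq -\mathcal{K}\eta - C\|v(\bar t)\|_{L^\infty}^3$.

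Finally I would repeat the trapezoidal computation of Proposition~\ref{proposition-int-1} verbatim, but with the new pointwise inequality $|u_i|\leq u_i+2\mathcal{K}\eta+2C\|v(\bar t)\|_{L^\infty}^3$. After the choice $\theta=C$ that cancels the $|u_i|$ factors on the inclined sides, those sides contribute $C'\eta\bar t + C\|v(\bar t)\|_{L^\infty}^3\bar t$; combining this with the initial-data term, the force term $\leq\eta(l+2C\bar t)$, and the sign-reversed flux cancellations as in Proposition~\ref{proposition-int-1} gives \eqref{est-int-2}. The main obstacle is the first step: converting the vanishing of the cubic Taylor coefficient at the origin into a uniform estimate of the form $O((|v|+|\sigma|)|\sigma|^3)$ across the small-data regime, and then cleanly propagating it along a characteristic without a pointwise invariant region as a safeguard, while also ensuring that the time-step jumps of Lemma~\ref{lemma:change-vi-1} contribute only to the $\eta\bar t$ piece and not to the cubic piece.
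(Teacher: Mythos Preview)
Your route is genuinely different from the paper's, and the key step does not close.

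The paper does \emph{not} attempt to recover any (even relaxed) invariant region. Instead it applies the divergence theorem on the strip $\{x_l(t)\leq x\leq x_r(t),\ 0\leq t\leq\bar t\}$ bounded by two Lipschitz curves through the endpoints of $I$, giving
\[
\int_I u_i(\bar t,x)\,dx=\int_{x_l(0)}^{x_r(0)}u_i(0,x)\,dx+\int_0^{\bar t}\bigl[f_i-\dot x_l\,u_i\bigr]\,dt-\int_0^{\bar t}\bigl[f_i-\dot x_r\,u_i\bigr]\,dt+\iint g\,dx\,dt.
\]
Because $\alpha_{22}=0$, \eqref{formation-f} yields $f_1(u)=u_1\bigl(-1+\tfrac{\alpha_{11}}{2}u_1+\alpha_{12}u_2\bigr)+O(|u|^3)$; hence one can choose the slope $\dot{\hat x}$ close to $f_i/u_i$ (well-defined and bounded, with Lemma~\ref{lemma-tech-3} used to stay off the shock set) so that $f_i-\dot{\hat x}\,u_i=O(|u|^3)$ pointwise. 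The lateral integrals then contribute $O(\|u\|_{L^\infty}^3\bar t)$ directly, with no TV input whatsoever.

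Your argument, by contrast, needs to sum the fourth-order crossing increments along a minimal backward $1$-characteristic and claims the total is $O(\|v\|_{L^\infty}^3)$ by invoking ``$TV(v(\bar t))\leq 20\hat\lambda/c_0$''. That bound, coming from \eqref{Oleinik-1}, controls only the variation on an interval of length $\hat\lambda t/2$ at the \emph{single} time $t$; it says nothing about $\sum_k|\sigma_{2,k}|$ taken over \emph{all} $2$-fronts the characteristic meets on $[0,\bar t]$. Under \eqref{assumption-v-1} the initial variation grows linearly with the interval, and the $1$-characteristic sweeps a spatial range of order $\bar t$; even after feeding in the Oleinik decay on dyadic time shells one only gets $\sum_k|\sigma_{2,k}|=O(\log\bar t)$, not $O(1)$. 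The cumulative error is then $O(\|v\|^3\log\bar t)$ rather than $O(\|v\|^3)$, the relaxed lower bound on $u_i$ acquires a $\bar t$-dependent factor, and after the trapezoidal integration the side contributions overshoot the $C\|v(\bar t)\|_{L^\infty}^3\bar t$ term in \eqref{est-int-2}. So the missing ingredient is exactly a uniform-in-$\bar t$ bound on the total $2$-wave mass crossed by $y_1$, which the single-time TV estimate does not supply.
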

	\begin{proof}
		Let $I=[x_l,x_r]$. Then let us consider two curves $x_l(t)$ and $x_r(t)$ such that $x_l(\bar{t})=x_l, \,x_r(\bar{t})=x_r$. Then using divergence theorem we obtain
		\begin{align}
			\int\limits_{I}u_i(\bar{t},x)\,dx&=\int\limits_{x_l(0)}^{x_r(0)}u_i(0,x)\,dx+\int\limits_{0}^{\bar{t}}[f_{i}(u(t,x_l(t)))-\dot{x}_l(t)u_i(t,x_l(t))]dt\\
			&\quad-\int\limits_{0}^{\bar{t}}[f_{i}(u(t,x_r(t)))-\dot{x}_r(t)u_i(t,x_r(t))]dt+\int\limits_{0}^{\bar{t}}\int\limits_{x_l(t)}^{x_r}g(x,u(t,x))\,dxdt
		\end{align}
		for $i=1,2$. Let $x_l(t)$ and $x_r(t)$ solve the following equations respectively
		\begin{equation}
			\dot{x}_l(t)=\frac{f_i(u(t,x_l(t)))}{u_i(t,x_l(t))}\mbox{ and }\dot{x}_r(t)=\frac{f_i(u(t,x_r(t)))}{u_i(t,x_r(t))}.
		\end{equation}
		By Lemma \ref{lemma-tech-3} there exists two Lipschitz curves $\hat{x}_l(t)$ and $\hat{x}_r(t)$ such that $\hat{x}_l(\bar{t})=x_l$, $\hat{x}_r(\bar{t})=x_r$ and
		\begin{equation}
			\norm{\dot{x}_l-\dot{\hat{x}}_l}_{L^\f}\leq \norm{u}^2_{L^\f},\quad\norm{\dot{x}_r-\dot{\hat{x}}_r}_{L^\f}\leq \norm{u}^2_{L^\f}.
		\end{equation}
		Since $\abs{x_r-x_l(t)}\leq l+C_6\bar{t}$ for all $t\in[0,\bar{t}]$, we have
		\begin{equation*}
			\int\limits_{0}^{\bar{t}}\int\limits_{x_l(t)}^{x_r}g(x,u(t,x))\,dxdt\leq (l+C_6\bar{t})\norm{\omega_2}_{L^1(0,\bar{t})}.
		\end{equation*} Hence the estimate \eqref{est-int-2} follows.
	\end{proof}
	\begin{proposition}\label{proposition-int-3}
		Let $v$ be the solution constructed in section \ref{sec:construction} corresponding to an initial data verifying \eqref{smallness-condition-1} and \eqref{assumption-v-1}. If $\frac{\pa^2 f_1}{\pa u_2^2}(0)\neq 0$ and $\frac{\pa^2 f_2}{\pa u_1^2}(0)= 0$ (or $\frac{\pa^2 f_1}{\pa u_2^2}(0)= 0$ and $\frac{\pa^2 f_2}{\pa u_1^2}(0)\neq 0$), then for any interval $I$ of length $l$ and $\bar{t}\geq0$, 
		\begin{equation}\label{est-int-3}
			\abs{\int\limits_{I}v_{i}(\bar{t},x)\,dx}\leq C^\p\eta(l+C^{\p\p}\bar{t})+C\norm{v(\bar{t})}^3_{L^\f}\bar{t}.
		\end{equation}
	\end{proposition}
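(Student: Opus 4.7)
The plan is as follows. Without loss of generality we treat the case $\frac{\pa^2 f_1}{\pa u_2^2}(0)=\al_{22}\neq 0$ and $\frac{\pa^2 f_2}{\pa u_1^2}(0)=\B_{11}= 0$; the complementary case follows by symmetry under exchange of indices. The two indices $i=1$ and $i=2$ must be handled by different arguments, selected according to which second derivative vanishes: the component with nonzero second derivative enjoys an invariant region and can be treated by a trapezoid argument, whereas the component with vanishing second derivative requires the Lipschitz-curve argument of Proposition \ref{proposition-int-2}.

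For $i=1$, the hypothesis $\al_{22}\neq 0$ gives, via Lemma \ref{lemma-int-1}, the one-sided invariant region $u_1\geq -\mathcal{K}\eta$. I adapt the divergence-theorem argument of Proposition \ref{proposition-int-1}, applied to $u_1$ alone rather than to $u_1+u_2$, on a trapezoid with straight sides of slope $\pm\theta$. The resulting lateral integrand is $u_1-f_1(u)/\theta$; using $f_1(u)=-u_1+\mathcal{O}(\norm{u}^2)$ together with the invariant region, it is bounded below by $-\mathcal{K}\eta(1+1/\theta)-C\eta^2/\theta$, hence essentially by $-\tilde{\mathcal{K}}\eta$ once $\theta$ is chosen at the level of $\hat{\la}$. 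Combining the initial-data contribution $\int_{I_0}u_1(0,x)\,dx\leq \eta(l+2\theta\bar t)$, the forcing contribution $\int_0^{\bar t}\!\!\int g_1\,dxdt\leq l\,\norm{\omega_2}_{L^1(0,\bar t)}$, the lateral lower bound, and finally the conversion $\abs{u_1}\leq u_1+2\mathcal{K}\eta$, yields $\int_I \abs{u_1(\bar t,x)}\,dx\leq C^\p\eta(l+C^{\p\p}\bar t)$, which is strictly stronger than \eqref{est-int-3} since no cubic term appears.

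For $i=2$, no invariant region is available, but the hypothesis $\B_{11}=0$ precisely removes the potentially singular term $\B_{11}u_1^2/(2u_2)$ from the ratio $f_2(u)/u_2$, so this ratio remains bounded as $u\to 0$. Consequently the Lipschitz-curve argument of Proposition \ref{proposition-int-2} applies verbatim: introduce curves $x_l,x_r$ solving $\dot{x}=f_2(u)/u_2$, approximate them by Lipschitz curves $\hat{x}_l,\hat{x}_r$ avoiding the countable family of shock graphs via Lemma \ref{lemma-tech-3} with $W^{1,\f}$ error $\norm{u}^2_{L^\f}$, and apply the divergence theorem to $u_2$ on the region between them. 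This produces $\int_I\abs{u_2(\bar t,x)}\,dx\leq C^\p\eta(l+C^{\p\p}\bar t)+C\norm{v(\bar t)}^3_{L^\f}\bar t$. Passing from $u_i$ to $v_i$ via the smooth diffeomorphism $\mathcal{J}$ and combining the two component estimates yields \eqref{est-int-3}.

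The main obstacle is the single-component trapezoid step for $i=1$: Proposition \ref{proposition-int-1} crucially uses invariant regions for both components to control the lateral integrand of $u_1+u_2$, and one must verify that dropping the invariant region for $u_2$ does not destroy the analogous estimate for $u_1$ alone. This works because the specific form \eqref{formation-f}, namely $f_1=-u_1+$ (quadratic), makes the lateral integrand for $u_1$ depend on $u_2$ only through the quadratic remainder, which is $\mathcal{O}(\eta^2)$ uniformly in $L^\f$ and is absorbed by choosing $\theta$ sufficiently large; no lower bound on $u_2$ is ever invoked.
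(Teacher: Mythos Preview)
Your proposal is correct and follows precisely the paper's strategy: handle $u_1$ via the invariant region of Lemma~\ref{lemma-int-1} combined with the trapezoid argument of Proposition~\ref{proposition-int-1}, and handle $u_2$ via the Lipschitz-curve argument of Proposition~\ref{proposition-int-2}. You in fact supply more justification than the paper does for why the single-component trapezoid estimate on $u_1$ goes through without an invariant region for $u_2$; one minor slip is that under hypothesis \descref{H-1}{(H)} the a~priori bound is $\norm{u}_{L^\infty}\leq K\sqrt{\eta}$, so the quadratic remainder in $f_1$ is $\mathcal{O}(\eta)$ rather than $\mathcal{O}(\eta^2)$, but this does not affect the argument.
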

	
	\begin{proof}
		We consider the situation when $\frac{\pa^2 f_1}{\pa u_2^2}(0)\neq 0$ and $\frac{\pa^2 f_2}{\pa u_1^2}(0)= 0$. By Lemma \ref{lemma-int-1} we get
		\begin{equation*}
			\abs{u_1}\leq u_1+2\mathcal{K}\eta.
		\end{equation*}
		As in Proposition \ref{proposition-int-1}, it follows that
		\begin{equation*}
			\abs{\int\limits_{I}u_1(t,x)\,dx}\leq C^\p\eta(l+C^{\p\p}\bar{t}).
		\end{equation*}
		By a similar argument as in Proposition \ref{proposition-int-2} we can obtain
		\begin{equation}
			\int\limits_{I}\abs{u_2(t,x)}\,dx\leq C^\p\eta(l+C^{\p\p}\bar{t})+C\norm{u(\bar{t})}_{L^\f}^3.
		\end{equation}
	\end{proof}

We note that for $2\times 2$ conservation laws, the case $\frac{\pa^2 f_1}{\pa u_2^2}(0)\neq 0$ and $\frac{\pa^2 f_2}{\pa u_1^2}(0)\neq 0$ is covered in the work by Glimm and Lax \cite{Glimm-Lax}. The other two cases are previously done in \cite{Bi-Col-Mon}. In Proposition \ref{proposition-int-1}, \ref{proposition-int-2} and \ref{proposition-int-3}, we extend the results of \cite{Bi-Col-Mon,Glimm-Lax} in the $2\times 2$ system  balance laws.
	
	\begin{proposition}\label{proposition-L-infty}
		There exists $K>0$ such that for any initial data $\bar{v}$ verifying \eqref{smallness-condition-1} and for all $m\in\N$ the solution to \eqref{eqn-main} satisfies 
		\begin{equation}
			\norm{v(t_m)}_{L^\f}\leq K\sqrt{\eta},
		\end{equation}
		where $t_m$ is as in \eqref{defn-tm}.
	\end{proposition}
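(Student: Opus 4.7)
The plan is to prove this by induction on $m$, combining three ingredients: the $L^1$ integral bound from Propositions \ref{proposition-int-1}--\ref{proposition-int-3}, the Oleinik-type positive-variation decay from Theorem \ref{thm-construction-1}, and a one-sided BV sandwich that uses only the positive variation $TV^+$. The base case $m=0$ follows from $\|\bar v\|_{L^\infty}\leq \eta\leq K\sqrt{\eta}$ for $\eta$ sufficiently small. For the inductive step, assuming the bound at $t_{m'}$ for $m'\leq m-1$, Proposition \ref{prop:Oleinik} would yield $TV(v(t_{m-1}); 2^m L)\leq 20\hat\lambda/c_0$, so that the construction in the trapezoid $\triangle_{m-1}$ is valid and both the integral estimate and Theorem \ref{thm-construction-1} apply on $[t_{m-1}, t_m]$ with constants depending only on the fixed $\bar K = 20\hat\lambda/c_0$.

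The crucial observation I would exploit is that, for any $x_0\in\mathbb{R}$, $l>0$, and $y\leq x_0$ one has $v_i(t_m, x_0)-v_i(t_m, y)\leq TV^+(v_i(t_m,\cdot); [y, x_0])$, and symmetrically $v_i(t_m, y)-v_i(t_m, x_0)\leq TV^+(v_i(t_m,\cdot); [x_0, y])$ for $y\geq x_0$. Averaging over $y\in[x_0-l, x_0]$ and over $y\in[x_0, x_0+l]$ respectively yields upper and lower bounds on $v_i(t_m, x_0)$, giving
\begin{equation*}
|v_i(t_m, x_0)|\leq \frac{1}{l}\int_{x_0-l}^{x_0+l}|v_i(t_m, y)|\,dy + TV^+(v_i(t_m,\cdot); [x_0-l, x_0+l]).
\end{equation*}
The advantage over the classical BV sandwich (which uses full $TV$) is that only $TV^+$ enters, and $TV^+$ decays as $l/t_m$ by Theorem \ref{thm-construction-1}, while $TV^-$ has no such decay.

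Next I would substitute the integral bound $\int_{x_0-l}^{x_0+l}|v_i|\leq C\eta(2l+C t_m)$ (after absorbing, in the degenerate cases of Propositions \ref{proposition-int-2}--\ref{proposition-int-3}, the $\|v(t_m)\|_{L^\infty}^3 t_m\leq K^3\eta^{3/2}t_m\leq K^3\eta\, t_m$ correction into the linear-in-$t_m$ term by using the inductive hypothesis) together with $TV^+(v_i(t_m,\cdot); [x_0-l, x_0+l])\leq 2l/(c_1 t_m)+\mathcal{M}\eta\cdot 20\hat\lambda/c_0$ to obtain
\begin{equation*}
|v_i(t_m, x_0)|\leq 2C\eta + \frac{C'\eta\,t_m}{l} + \frac{2l}{c_1 t_m} + C''\eta.
\end{equation*}
Choosing $l = t_m\sqrt{c_1 C'\eta/2}$ balances the middle two terms and produces $|v_i(t_m, x_0)|\leq K_1\sqrt{\eta} + K_2\eta$. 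Transferring back from the Riemann invariants to $u$ (equivalently $v$) via the smooth invertible change of variables $\mathcal{J}$ and choosing $K$ suitably closes the induction.

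The hard part will be the circular dependency introduced by the $\|v(t_m)\|_{L^\infty}^3 t_m$ term in the degenerate cases (Propositions \ref{proposition-int-2}--\ref{proposition-int-3}): the integral bound involves the very quantity we are trying to estimate. This is resolved by the bootstrap structure of the induction: the hypothesis $\|v(t_m)\|_{L^\infty}\leq K\sqrt{\eta}$ is used inside the integral estimate to generate an $\mathcal{O}(K^3\eta^{3/2})$ correction that is absorbed in the final step by fixing $K$ large enough so that the returning estimate is again bounded by $K\sqrt{\eta}$, independently of $m$. A secondary subtlety is verifying that the constants $c_1$ and $\mathcal{M}$ appearing in Theorem \ref{thm-construction-1} remain uniform across all trapezoids; this follows because they depend only on the fixed BV ceiling $\bar K = 20\hat\lambda/c_0$ supplied inductively by Proposition \ref{prop:Oleinik}.
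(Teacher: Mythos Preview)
There is a genuine gap in your induction. When you apply Theorem \ref{thm-construction-1} on the slab $[t_{m-1},t_m]$, the ``initial data'' is $v(t_{m-1})$, so the second term in the Oleinik estimate is
\[
\mathcal{M}\bigl(\norm{v(t_{m-1})}_{L^\infty}+\norm{\omega_2}_{L^1}\bigr)\,TV\bigl(v(t_{m-1});\cdot\bigr)
\;\leq\;\mathcal{M}\bigl(K\sqrt{\eta}+\eta\bigr)\cdot\frac{20\hat\lambda}{c_0},
\]
not $\mathcal{M}\eta\cdot 20\hat\lambda/c_0$ as you wrote. Thus the right-hand side of your pointwise bound contains a term of size $\bigl(\mathcal{M}\cdot 20\hat\lambda/c_0\bigr)\,K\sqrt{\eta}$. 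Since $\mathcal{M}$ is a fixed constant depending only on $\bar K=20\hat\lambda/c_0$ and is not small (cf.\ the remark after Theorem \ref{thm-construction-1}), the coefficient $\mathcal{M}\cdot 20\hat\lambda/c_0$ is in general $\geq 1$, and the inequality $\norm{v(t_m)}_{L^\infty}\leq K\sqrt{\eta}$ cannot be recovered for any choice of $K$: the induction does not close.

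The paper circumvents this by a pigeonhole refinement that you are missing. One subdivides $[t_{m-1},t_m]$ into $N$ equal horizontal strips and selects the strip on which the drop of the potential $\hat Q$ is at most $\tfrac{1}{N}$ of the total. Applying the Oleinik estimate on that thin strip (of height $\Delta t_{m-1}/N$) gives the same two-term bound, but now with an extra factor $1/N$ in front of the dangerous $\mathcal{M}(\norm{v(t_{m-1})}_{L^\infty}+\norm{\omega_2}_{L^1})\,TV$ term, at the price of a factor $N$ in the $l/(c_1 t)$ term. After the integral estimate and the optimization in $l$, one arrives at a bound of the form $C_*N\sqrt{\eta}+\tfrac{C^*}{N}K\sqrt{\eta}$ at an intermediate time $t\in(t_{m-1},t_m)$. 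One first fixes $N$ large (depending on $\mathcal{M},M$ but not on $K$) so that $C^*/N\leq 1/(4M)$, then chooses $K$ large depending on $N$, and finally uses Proposition \ref{prop-construction-1} to pass from $t$ to $t_m$ with the harmless multiplicative loss $M$. Without this $N$-subdivision step your argument cannot absorb the $K\sqrt{\eta}$ feedback.
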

	
	\begin{proof}
		We prove by induction. For $m=0$, it holds for any $K>\sqrt{\eta}$. Now suppose that the result holds for $k\leq m-1$.
		
		Note that the lower basis of $\bigtriangleup_m$ is covered by the upper basis of $4(m-1)$ trapezoids and $\mathcal{T}$ be the union of them. We divide $\mathcal{T}_{m-1}$ by horizontal segments $b_{m-1}^0,\cdots,b_{m-1}^N$ into $N$ sub trapezoids, let us denote them by $\mathcal{T}_{m-1}^1,\cdots,\mathcal{T}_{m-1}^N$. Note that $\mathcal{T}^j_{m-1}$ has height $h_N=2^{m-2}L/(N\hat{\la})$, upper basis $b_{m-1}^{j}$ and lower basis $b_{m-1}^{j-1}$ for $j=1,\cdots, N$. Note that
		\begin{align*}
			&\sum\limits_{k=1}^{N}(\hat{Q}(v(t_{m-1}+(k-1)h_N)|_{b_{m-1}^{k-1}})-\hat{Q}(v(t_{m-1}+kh_N)|_{b_{m-1}^{k}}))\\
			&= \hat{Q}(v(t_{m-1})|_{b_{m-1}^{0}})-Q(v(t_{m-1}+(k-1)h_N)|_{b_{m-1}^{N}}). 
		\end{align*} 
		Since $(\hat{Q}(v(t_{m-1}+(k-1)h_N)|_{b_{m-1}^{k-1}})-\hat{Q}(v(t_{m-1}+kh_N)|_{b_{m-1}^{k}}))\geq0$ for all $k=1,\cdots,N$, there exists $n\in\{1,\cdots,N\}$ such that
		\begin{align*}
			&(\hat{Q}(v(t_{m-1}+(n-1)h_N)|_{b_{m-1}^{n-1}})-\hat{Q}(v(t_{m-1}+nh_N)|_{b_{m-1}^{n}}))\\
			&\leq \frac{1}{N}\left[\hat{Q}(v(t_{m-1})|_{b_{m-1}^{0}})-\hat{Q}(v(t_{m-1}+(k-1)h_N)|_{b_{m-1}^{N}})\right].
		\end{align*}
		Hence, we get
		\begin{align*}
			&(\hat{Q}(v(t_{m-1}+(n-1)h_N)|_{b_{m-1}^{n-1}})-\hat{Q}(v(t_{m-1}+nh_N)|_{b_{m-1}^{n}}))\\
			&\leq \frac{1}{N}\hat{Q}(v(t_{m-1})|_{b_{m-1}^{0}})\\
			&\leq \frac{1}{N}(TV(v(t_{m-1})))^2\\
			&\leq \frac{1}{N}\left(\frac{20\hat{\la}}{c_0}\right)^2.
		\end{align*}
		Fix two points $(t,x)$ and $(t,y)$ on the segment $b_{m-1}^{n}$ (say $=[\underline{b},\overline{b}]$) with $x<y$. By using Theorem \ref{thm-construction-1}, we have
		\begin{equation*}
			v_i(t,y)\leq v_i(t,x)+\frac{N}{L}\frac{y-x}{2^{m-2}}\frac{\hat{\la}}{c_0}+\frac{\mathcal{M}}{N}\left(\frac{20\hat{\la}}{c_0}\right)^2\left(\norm{v(t_{m-1})}_{L^\f}+\norm{\omega_2}_{L^1(0,T^*)}\right).
		\end{equation*}
		Integrating w.r.t $y$ we get
		\begin{equation*}
			\frac{1}{l}\int\limits_{x}^{x+l}v_i(t,y)\,dy\leq v_i(t,x)+\frac{N}{L}\frac{l}{2^{m-1}}\frac{\hat{\la}}{c_0}+\frac{\mathcal{M}}{N}\left(\frac{20\hat{\la}}{c_0}\right)^2\left(\norm{v(t_{m-1})}_{L^\f}+\norm{\omega_2}_{L^1(0,T^*)}\right).
		\end{equation*}
		Similarly, integrating over $(y-l,y)$ we obtain
		\begin{equation*}
			v_i(t,y)\leq 	\frac{1}{l}\int\limits_{y-l}^{y}v_i(t,x)\,dx+\frac{N}{L}\frac{l}{2^{m-1}}\frac{\hat{\la}}{c_0}+\frac{\mathcal{M}}{N}\left(\frac{20\hat{\la}}{c_0}\right)^2\left(\norm{v(t_{m-1})}_{L^\f}+\norm{\omega_2}_{L^1(0,T^*)}\right).
		\end{equation*}
		Hence, we have for $x\in [\underline{b},\overline{b}-l]$ with $l<\frac{\overline{b}-\underline{b}}{2}$,
		\begin{equation*}
			\abs{v_i(t,x)}\leq \frac{1}{l}\abs{\int\limits_{y-l}^{y}v_i(t,z)\,dz}+\frac{N}{L}\frac{l}{2^{m-1}}\frac{\hat{\la}}{c_0}+\frac{\mathcal{M}}{N}\left(\frac{20\hat{\la}}{c_0}\right)^2\left(\norm{v(t_{m-1})}_{L^\f}+\norm{\omega_2}_{L^1(0,T^*)}\right).
		\end{equation*}
	for any $y\in [x,\overline{b}]$. From Proposition \ref{proposition-int-1}, \ref{proposition-int-2} and \ref{proposition-int-3} we obtain
		\begin{align*}
			\abs{v_i(t,x)}\leq &C^\p\left(l+C^{\p\p}\frac{t}{l}\right)+C\norm{v(t)}_{L^\f}^3\frac{t}{l}+\frac{N}{L}\frac{l}{2^{m-1}}\frac{\hat{\la}}{c_0}\\
			&+\frac{\mathcal{M}}{N}\left(\frac{20\hat{\la}}{c_0}\right)^2(\norm{v(t_{m-1})}_{L^\f}+\norm{\omega}_{L^1}).
		\end{align*}
		Set $l/t=\sqrt{\eta+\norm{v(t)}_{L^\f}}$. By induction hypothesis $\norm{v(t_{m-1})}_{L^\f}\leq K\sqrt{\eta}$. By Proposition \ref{prop-construction-1} we get $\norm{v(t)}_{L^\f}\leq MK\sqrt{\eta}$. Subsequently,
		\begin{align*}
			\norm{v(t)}_{L^\f}&\leq C(\eta+\sqrt{\eta})+C\norm{v(t)}_{L^\f}^{\frac{3}{2}}+\frac{N}{c_0}\frac{\sqrt{\eta+\norm{v(t)}_{L^\f}^3}}{2^{m-1}}\frac{\hat{\la}t}{L}\\
			&+\frac{\mathcal{M}}{N}\left(\frac{20\hat{\la}}{c_0}\right)^2(MK\sqrt{\eta}+\eta)\\
			&\leq C_*N\sqrt{\eta}+\frac{C^*}{N}\sqrt{\eta}.
		\end{align*}
		Setting $N=4C_*M$ and $K=4C_*MN$, we have
		\begin{equation}
			\norm{v(t)}_{L^\f}\leq \frac{K}{4M}\sqrt{\eta}+\frac{1}{4M}\sqrt{\eta}\leq \frac{K}{2M}\sqrt{\eta}.
		\end{equation}
		By Proposition \ref{prop-construction-1} we obtain $\norm{v(t_m)}_{L^\f}\leq M\norm{v(t)}_{L^\f}\leq \frac{K}{2}\sqrt{\eta}$. This completes the proof of Proposition \ref{proposition-L-infty}.
	\end{proof}

\section{Structure of solution}\label{sec:structure}

\begin{theorem}
	Let $u$ be a weak solution to \eqref{eqn-main}--\eqref{eqn-data} as a limit of front tracking approximation. Then there exist a countable family of Lipschitz continuous curves $\Gamma=\{(t,y_m(t));t\in(t^-_m,t^+_m)\}$ and a countable set $\Theta\subset[0,\f)\times\R$ such that the following holds. For each $m\geq1$ and each $t\in(t_m^-,t_m^+)$ with $(t,y_m(t))\notin \Theta$, there exists derivative $\dot{y}_m(t)$ and the left and right limits exist,
	\begin{equation}\label{solution-trace}
		u^-:=\lim\limits_{\tiny \begin{array}{rc}
			 &(s,z)\rr (t,y_{m}(t)),\\
			 &y<y_m(t)
			\end{array}}u(s,z)\mbox{ and }\,\,
		u^+:=\lim\limits_{\tiny \begin{array}{rc}
				&(s,z)\rr (t,y_{m}(t)),\\
				&y>y_m(t)
		\end{array}}u(s,z).
	\end{equation} 
   These limits satisfy the Rankine-Hugoniot equations and the lax conditions 
   \begin{equation}\label{solution-R-H}
   	(u^+-u^-)\dot{y}_m=(f(u^+)-f(u^-))\mbox{ and }\la_i(u^+)\leq \dot{y}_m\leq \la_i(u^-)\mbox{ for some }i\in\{1,2\}.
   \end{equation} 
Moreover, $u$ is continuous outside the set $\Theta\cup\Gamma$.
\end{theorem}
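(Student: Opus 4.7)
My plan is to adapt the classical structure theorem for $n\times n$ conservation laws (Bressan, Chapter~10) to the present balance-law setting by exploiting the decay of the interaction potential $\hat{Q}$ established in \Cref{sec:construction}. First, I work at the level of the $\e$-approximate front tracking solutions $v^\e$ constructed in section~\ref{sec:construction}, and identify the families of ``large'' shocks whose traces will become the Lipschitz curves $y_m$ in the limit.

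\emph{Step 1 (Large shocks in the approximation).} Fix a threshold $\delta>0$ and call a shock front \emph{$\delta$-big} if its strength exceeds $\delta$. Because $\hat{Q}(v^\e)$ is uniformly bounded, strictly decreases by at least $\frac{K}{4}\abs{\si'\si''}$ at every binary interaction and by at least $\frac{R}{2}(V+\norm{\omega_1^\nu}_{L^1})\tau_\nu\omega_{2,n}$ at every splitting time (Proposition~\ref{proposition-Q}), the number of interactions involving two $\delta$-big fronts and the number of cancellations between a $\delta$-big front and any other front are bounded by a constant $N(\delta)$ independent of $\e$. Each $\delta$-big front therefore persists on a maximal time interval $(t^-,t^+)$, is a piecewise linear Lipschitz curve with speed in $[-\hat{\la},\hat{\la}]$, and satisfies Rankine--Hugoniot and Lax conditions at every non-interaction time (with an error controlled by $\e$ and the forcing).

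\emph{Step 2 (Extracting the limit family).} For each $\delta=1/k$, $k\in\N$, label the $\delta$-big fronts of $v^\e$ as $\gamma^{\e,k}_1,\dots,\gamma^{\e,k}_{N_\e(k)}$ with $N_\e(k)\leq N(1/k)$. By Arzel\`a--Ascoli and a standard diagonal extraction I pass to a subsequence along which every $\gamma^{\e,k}_m$ converges uniformly on compacts to a Lipschitz curve $y_m(t)$, $t\in(t_m^-,t_m^+)$, with $\mathrm{Lip}(y_m)\leq\hat{\la}$. Set
\begin{equation*}
\Gamma=\bigcup_{m\geq1}\mbox{graph}(y_m),\qquad \Theta=\{\text{endpoints }(t_m^\pm,y_m(t_m^\pm))\}\cup\{\text{pairwise intersections of }y_m,y_{m'}\},
\end{equation*}
so that $\Gamma$ is countable and $\Theta$ is at most countable.

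\emph{Step 3 (Traces, Rankine--Hugoniot, Lax).} Fix $(\bar t,y_m(\bar t))\notin\Theta$. In a sufficiently small cylinder $C_r$ around this point, only one $\delta$-big front is present for each $\e$, and the total strength of all other fronts of $v^\e$ meeting $C_r$ is at most $\delta+\eta_r$ with $\eta_r\to0$ as $r\to0$. Combining the uniform BV estimate from Theorem~\ref{thm-construction-1}, the Oleinik-type estimate \eqref{Oleinik-1}, and the oscillation bound of Bressan (Chapter~10), the one-sided limits $u^\pm$ in \eqref{solution-trace} exist and $\dot{y}_m(\bar t)$ exists for a.e.\ $t$. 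Integrating \eqref{eqn-main} over a small parallelogram straddling the curve and shrinking it to the point, the contribution of $g$ is $\mathcal{O}(r^2\norm{g}_{L^\infty})$ and vanishes, leaving the Rankine--Hugoniot relation in \eqref{solution-R-H}; the Lax inequality is preserved because it holds along each front of $v^\e$ and the eigenvalues $\la_i$ are continuous.

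\emph{Step 4 (Continuity off $\Gamma\cup\Theta$).} If $(\bar t,\bar x)\notin\Gamma\cup\Theta$, then for every $\delta>0$ there is $r=r(\delta)>0$ such that no $\delta$-big front of $v^\e$ enters $B_r(\bar t,\bar x)$ for $\e$ small (else a sub-subsequence would produce a curve in $\Gamma$ through $(\bar t,\bar x)$). The standard oscillation estimate then gives $\mathrm{osc}(u,B_r(\bar t,\bar x))\leq C\delta+C\norm{\omega_2}_{L^1(\bar t-r,\bar t+r)}$; letting first $r\to0$ and then $\delta\to0$ yields continuity of $u$ at $(\bar t,\bar x)$. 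The main obstacle will be step~3: the operator splitting introduces spurious small jumps at every $t_n$, and one must verify that neither these jumps nor the forcing term contaminate the Rankine--Hugoniot identity in the limit; this is exactly what the extra term $R(V+\norm{\omega_1^\nu}_{L^1})\tau_\nu\sum_{t_k\geq t}\omega_{2,k}$ in $\hat{Q}$ is designed to absorb, and its decay in Proposition~\ref{proposition-Q} is what makes the argument go through.
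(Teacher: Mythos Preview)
Your outline follows the same high-level strategy as the paper (and as Bressan, Chapter~10): extract the large shocks via compactness, then argue locally. However, there is a genuine gap in your definition of the exceptional set $\Theta$ and, consequently, in Steps~3 and~4.

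You set $\Theta$ to consist only of endpoints of the curves $y_m$ and their pairwise intersections. This is not enough. The paper introduces, for each approximation $u_\nu$, an \emph{interaction--cancellation measure} $\mu_\nu^{IC}$ which records the amount of interaction at binary collision points and the amount of variation produced by the operator splitting at the times $t_n$; these measures have uniformly bounded mass (controlled by $\hat Q$) and hence subconverge weakly to some $\bar\mu^{IC}$. The set $\Theta$ in the paper also contains $\Theta_1:=\{(t,x):\bar\mu^{IC}(\{(t,x)\})>0\}$, together with the jump set $\Theta_0$ of the initial datum. These extra points are exactly what make your Step~3 claim ``the total strength of all other fronts of $v^\e$ meeting $C_r$ is at most $\delta+\eta_r$ with $\eta_r\to0$'' true. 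Without excluding the atoms of $\bar\mu^{IC}$ there is nothing preventing an arbitrarily large amount of small waves (creations, cancellations, or $2$-waves crossing the $1$-shock) from accumulating near $(\bar t,y_m(\bar t))$, and then neither the one-sided limits \eqref{solution-trace} nor the continuity statement in Step~4 follow. Your invocation of ``the standard oscillation estimate'' hides precisely this point: that estimate bounds the oscillation by $\mu_\nu^{IC}$ on a shrinking neighbourhood, and one needs $\bar\mu^{IC}(\{P\})=0$ to conclude.

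Concretely, the paper's Steps~4 and~5 argue \emph{by contradiction}: if the trace (or continuity) fails at $P\notin\Theta$, one builds, for each $\nu$, a small region $\Gamma_\nu$ (bounded by characteristics and/or the shock $y_{m,\nu}$) shrinking to $P$ on which $\mu_\nu^{IC}(\overline{\Gamma_\nu})$ is bounded below by a fixed positive quantity (e.g.\ $c\epsilon_1\vartheta$ or $\epsilon_1^2$). Passing to the limit gives $\bar\mu^{IC}(\{P\})>0$, contradicting $P\notin\Theta_1$. Your direct approach cannot reproduce this without the measure $\mu_\nu^{IC}$; in particular, the operator-splitting steps create new small waves at every $t_n$ and at every grid point $x_j$, and it is only the uniform control on $\mu_\nu^{IC}$ (which in the paper explicitly incorporates the terms $\tau_\nu\omega_{2,n}(\abs{\sigma}+\e_\nu\omega_{1,j})$) that prevents these from piling up at a single space-time point.
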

\begin{proof}
	The proof is divided into several steps. 
	\noindent
	\begin{enumerate}
		\item[Step-1.] Fix a sequence $\e_\nu\rr0$. For each $\nu$ let $u_\nu $ be the approximate solution obtained from front tracking algorithm. Set $\mu_{\nu,0}^{i\pm}$ as measure of the positive and negative $i$-waves present in th piecewise constant initial data. As $\nu\rr\f$, we assume that
		\begin{equation*}
			\mu^{i\pm}_{\nu,0}\rightharpoonup \bar{\mu}^{i\pm}\mbox{ for }i=1,2.
		\end{equation*}  
	Define $\mu^{IC}_{\nu}$ as follows
	\begin{equation}
		\mu_\nu^{IC}(\{(t,x)\}):=\left\{\begin{array}{cl}\abs{\si\si^{\p}}+\left\{\begin{array}{cl}
			\abs{\si}+\abs{\si^\p}-\abs{\si+\si^\p}&\mbox{ if }i=i^\p,\\
			0&\mbox{ if }i\neq i^\p,
		\end{array}\right.&\mbox{ for }t\neq t_n,\\
	  \left\{\begin{array}{cl}
	  \tau_\nu\omega_{2,n}(\abs{\si}+\e_\nu\omega_{1,j})&\mbox{ if }x=x_j,\\
	  	\tau_\nu\omega_{2,n}\abs{\si}&\mbox{ if }x\neq x_j,
	  \end{array}\right.&\mbox{ for }t= t_n.
	\end{array}\right.
	\end{equation}
   We note that $\mu_\nu^{IC}([0,T^*]\times\R)\leq C_*$ for some $C_*>0$ which is independent of $\nu$. Hence, we may assume up to a subsequence that
   \begin{equation}
   	\mu^{IC}_\nu\rightharpoonup \bar{\mu}^{IC}\mbox{ as }\nu\rr\f.
   \end{equation}
   We consider two countable sets $\Theta_0,\Theta_1$ as follows
   \begin{equation}
   	\Theta_0:=\left\{(0,x);\bar{u}(x+)\neq \bar{u}(x-)\right\}\mbox{ and }\Theta_1:=\left\{(t,x);\bar{\mu}^{IC}(\{(t,x)\})>0\right\}.
   \end{equation} 
  
  \item[Step-2]We define the $i$-shock curves in the solutions $u$. Fix $\vartheta>0$. By a $\vartheta$-shock front of the $i$-th family in the approximate solution $u_\nu$, we mean a polygonal line in $t-x$ plane with nodes $(s_0,z_0),(s_1,z_1),\dots,(s_N,z_N)$ satisfying 
  \begin{enumerate}
  	\item The points $(s_k,z_k)$ are interaction points with $0\leq s_0<s_1<\cdots<s_N$.
  	\item For each $k$ the line joining $(s_{k-1},z_{k-1})$ and ($s_k,z_k)$ is an $i$-shock with strength $\abs{\si_k}\geq\vartheta/2$. Moreover, $\abs{\si_{k_*}}\geq\vartheta$ for some $k_*\in\{1,\cdots,N\}$.
  	\item For $k<N$, if two incoming $i$-shocks both of strengths $\geq \vartheta/2$ interact at the node $(s_k,z_k)$, the shock coming from $(s_{k-1},z_{k-1})$ has the largest speed.  
  \end{enumerate} 
   We call a $\vartheta$-shock as maximal $\vartheta$-shock if it is maximal w.r.t. set theoretical inclusion. We note that the number of maximal $\vartheta$-shock starting from $s_0=0$ is bounded by $2\vartheta^{-1}V(\bar{u}_\nu)$. Observe that a shock curve can go through four types of interactions.
   \begin{enumerate}
   	\item The $i$-shock curve interacts with $j$-shock for $j\neq i$.
   	\item The $i$-shock curve interacts with $i$-shock.
   	\item The node $(s_k,z_k)$ occurs at $t=t_n$ for some $n$ but $z_k\neq x_j$ for all $j\in\mathbb{Z}$. 
   	\item The node $(s_k,z_k)$ occurs at $t=t_n$ for some $n$ and $z_k= x_j$ for some $j\in\mathbb{Z}$. 
   \end{enumerate}
 Suppose the $\vartheta$-shock curve starting from $t=t_0>0$ achieves the strength $\geq \vartheta$ first time at $t=t_{k_*}$. Let $\mathcal{I}_{a},\mathcal{I}_b,\mathcal{I}_c,\mathcal{I}_d$ be disjoint subsets of $\{0,1,\cdots,k_*\}$ containing indices such that the corresponding node of the type $a,\, b,\, c,\, d$ respectively. We obtain
 \begin{align}
 	\frac{\vartheta}{2}\leq & C_3\sum\limits_{k\in \mathcal{I}_a}\left[\abs{\si^\p_{i,k}}+\abs{\si_{i,k}\si_{i,k}^\p}(\abs{\si_{i,k}}+\abs{\si^\p_{i,k}})\right]\\
 	&+C_3\sum\limits_{k\in \mathcal{I}_b}\abs{\si_{i,k}\si_{j,k}}(\abs{\si_{i,k}}+\abs{\si_{j,k}})\\
 	&+C_3\sum\limits_{k\in \mathcal{I}_c}\tau_\nu\omega_{2,n_k}\abs{\si_{i,k}}+C_3\sum\limits_{k\in \mathcal{I}_d}\tau_\nu\omega_{2,n_k}\e_\nu\omega_{1,j_k}.
 \end{align}
Hence, the functional $\hat{Q}$ decays in this process at least by an amount $\geq c_*K\vartheta^3$. Therefore, we have
\begin{equation}
	N_\nu:=\#\left\{\mbox{maximal $\vartheta$-shock fronts in $u_\nu$}\right\}\leq (c_*K\vartheta^3)^{-1}\hat{Q}(T^*)+2\vartheta^{-1}V(\bar{u}_\nu).
\end{equation}
Let $\{y_{m,\nu}\}_{m=1}^{N_\nu}$ be the set of $\vartheta$-maximal shock curves for $u_\nu$ such that $y_{m,\nu}:[t^-_{m,\nu},t_{m,\nu}^+]\rr\R$ for some $t_{m,\nu}^-<t_{m,\nu}^+$. Now, up to subsequence, we assume that $N_\nu=\bar{N}$ for all $\nu$. Then, up to subsequence we can assume that
\begin{equation}
	y_{m,\nu}(\cdot)\rr y_m(\cdot)\mbox{ and }t_{m,\nu}^\pm\rr t^\pm_m\mbox{ for each }m\in\{1,\cdots,\bar{N}\}
\end{equation}
for some Lipschitz continuous curve $y_{m}$. Finally by setting $\vartheta=1,1/2,1/3,\cdots$ we obtain a countable family of Lipschitz curves $\{y_m\}=:\Gamma$ such that $y_m:(t_m^-,t_m^+)$ for some $t_m^-<t_m^+$. 

\item[Step-3.] Let $\Theta_2$ be the set of intersection points of $y_m$ and $y_n$ for all $y_m,y_n\in\Gamma$. We consider
\begin{equation}
	\Theta:=\Theta_0\cup\Theta_1\cup\Theta_2.
\end{equation}

\item[Step-4.] Let $P=(\tau,\xi)=(\tau,y_m(\tau))$ be a point on $y_m$ such that $P\notin \Theta$. Since $u(\tau,\cdot)$ is a BV function, there exists two states $u^\pm$ such that
\begin{equation}
	u^-=\lim\limits_{x\rr y_m(\tau)-}u(\tau,x)\mbox{ and }u^+=\lim\limits_{x\rr y_m(\tau)+}u(\tau,x).
\end{equation}
We are going to show that \eqref{solution-trace} holds true for $u^\pm$. By the construction there exists a sequence $y_{m,\nu}\rr y_m$. We assume that for each $\nu$, $y_{m,\nu}$ is an $\vartheta$-shock curve corresponding to $i$-th family. We claim that
\begin{align}
	\lim\limits_{r\rr0}\limsup\limits_{\nu\rr\f}\left(\sup\limits_{\tiny\begin{array}{rc}
			&x<y_{m}(t)\\
			&(t,x)\in B(P,r)
		\end{array}}\abs{u_\nu(t,x)-u^-}\right)&=0,\label{limit-u-}\\
		\lim\limits_{r\rr0}\limsup\limits_{\nu\rr\f}\left(\sup\limits_{\tiny\begin{array}{rc}
			&x>y_{m}(t)\\
			&(t,x)\in B(P,r)
	\end{array}}\abs{u_\nu(t,x)-u^+}\right)&=0.\label{limit-u+}
\end{align}
We argue by contradiction. Suppose \eqref{limit-u-} does not hold. Then there exists $\epsilon_0>0$ a sequence of points $Q_\nu$ such that 
\begin{equation}
	Q_\nu\rr P, \,\, \abs{u(Q_\nu)-u^-}\geq \epsilon_0\mbox{ for all }\nu\geq1.
\end{equation}
Furthermore, we can find another sequence of points $\{P_\nu\}$ such that the segment $P_\nu Q_\nu$ is space-like and 
\begin{equation}
	P_\nu\rr P\mbox{ and }u_\nu(P_\nu)\rr u^-\mbox{ as }\nu\rr\f.
\end{equation}
To fix the ideas we consider $i=1$. 
\begin{enumerate}

	\item Suppose in the solution $u_\nu$, the segment $P_\nu Q_\nu$ is crossed by an amount $\geq \epsilon_1$ of $2$-waves in $u_\nu$ for some $\epsilon_1>0$ independent of $\nu$. 
	
	Let $\Gamma_\nu$ be the region bounded by the 1-shock $y_{m,\nu}$, the segment $P_\nu Q_\nu$, the minimal forward 2-characteristic from $P_\nu$ (intersecting $y_m$ at $A_\nu$), the maximal forward 2-characteristic from  $Q_\nu$ (intersecting $y_m$ at $B_\nu$).  Note that no 2-wave can leave $\Gamma_\nu$ from the sides $P_\nu A_\nu$ and $Q_\nu B_\nu$. Therefore, 2-waves either get canceled by interactions or cross the 1-shock curve $y_m$. Hence, there exists $c>0$ such that either $\mu_\nu^{IC}(\Gamma_\nu)\geq c_0\epsilon_1^2$ or 2-waves of total amount $\geq c\epsilon_1$ crosses $y_m$. In the later case we obtain $\mu_\nu^{IC}(\bar{\Gamma}_\nu)\geq c \epsilon_1\vartheta$. This gives a contradiction.
	
		\item Suppose in the solution $u_\nu$, the segment $P_\nu Q_\nu$ is crossed by an amount $\geq \epsilon_1$ of $1$-waves in $u_\nu$ for some $\epsilon_1>0$ independent of $\nu$ and the total amount of $2$-waves crossing $P_\nu Q_\nu$ vanishes to zero as $\nu\rr\f$.
		
		In this case, we construct the region $\Gamma_\nu$ as follows. We extend the segment $P_\nu Q_\nu$ on right and let it intersects $y_m$ at $B_\nu$. Now, we would like to choose a point $C_\nu$ such that the segment $C_\nu B_\nu$ contains 1-wave of amount $\geq \vartheta/4$. We consider the minimal forward 1-characteristic $x(t)$ from $C_\nu$. Note that the slope $\dot{x}>\dot{y}_{m,\nu}+c^\p\vartheta$ for some $c^\p>0$ due genuinely non-linearity. In this case, $x(t)$ hits $y_{m,\nu}$ at some point $A_\nu$. Then we consider $\Gamma_\nu$ as the region enclosed by $C_\nu B_\nu$, $x(t)$ and $y_{m,\nu}(t)$. Furthermore, we have
		\begin{equation}
			\abs{P_\nu-A_{\nu}}\leq \frac{C_\nu-B_\nu}{c^\p\vartheta}\rr0\mbox{ as }\nu\rr\f.
		\end{equation}
		Since no 1-wave can exit $\Gamma_\nu$ we have $\mu_\nu^{IC}(\bar{\Gamma}_\nu)\geq \vartheta^2$. This implies $\bar{\mu}^{IC}(\{P\})>0$.

\end{enumerate}
Above cases cover all the situations. Hence, we establish the limit \eqref{limit-u-}. By a similar argument \eqref{limit-u-} follows. By a similar argument as in \cite{Bressan-book} we obtain \eqref{solution-R-H}.

\item[Step-5.] Consider a point $P=(\tau,\xi)\notin \Theta$ and $P\notin y_{m}(t)$ for any $m$. To show that $u$ is continuous at $P$ we argue by contradiction. Suppose $u$ is not continuous at $P$, Then there exists $\epsilon_0>0$ and $P_\nu=(s_\nu^P,z_\nu^P), Q_\nu=(s_\nu^Q,z_\nu^Q)$ such that $Q_\nu P_\nu$ is space-like and $P_\nu,Q_\nu\rr P$ with 
\begin{equation}
	\abs{u_\nu(Q_\nu)-u(P)}\geq \e_0 \mbox{ for all }\nu\geq1\mbox{ and }u_\nu(P_\nu)\rr u(P)\mbox{ as }\nu\rr\f.
\end{equation}
Now we have the following two possibilities.
\begin{enumerate}
	\item Suppose there exists $\epsilon_1>0$ such that amount of 1-wave crossing $P_\nu Q_\nu$ is $\geq \epsilon_1$ and amount of 2-wave crossing $P_\nu Q_\nu$ is $\geq \e_1$. Set $s_\nu^*=\min\{s_\nu^P,s_\nu^Q\}$ and consider the box $\Gamma_\nu$ defined as follows
	\begin{equation}
		\Gamma_\nu=\left\{(t,x);\abs{t-s_\nu^*}\leq \rho_\nu,x\in [\min\{z_\nu^P,z_\nu^Q\},\max\{z_\nu^P,z_\nu^Q\}]\right\},
	\end{equation}
with $\rho_\nu=\abs{P_\nu-Q_\nu}+2\frac{\abs{P_\nu-Q_\nu}}{c_0}$ where $c_0$ is as in \eqref{def:strict-hyperbolicity}. Note that $\mu_\nu^{IC}(\Gamma_\nu)\geq \epsilon_1^2$, Since $\rho_\nu\rr0$ as $\nu\rr\f$, we obtain $\bar{\mu}^{IC}(\{P\})>0$. This gives a contradiction.
    
    \item Suppose there exists $\epsilon_1>0$ such that amount of 1-wave crossing $P_\nu Q_\nu$ is $\geq \epsilon_1$ and the total amount of 2-wave crossing $P_\nu Q_\nu$ is $\geq \e_1$ converges to zero as $\nu\rr\f$.
    
    Note that for large enough $\nu$ we assume that $\abs{v_2(u_\nu(P_\nu))-v_2(u_\nu(Q_\nu))}\leq \epsilon_1^4$ and $\abs{v_1(u_\nu(P_\nu))-v_1(u_\nu(Q_\nu))}\geq \epsilon_1$. For sufficiently small $\epsilon_1$ we can conclude that $\abs{\la_1(u_\nu(P_\nu))-\la_1(u_\nu(P_\nu))}\geq \epsilon_2$ for some $\epsilon_2>0$ independent of $\nu$. Now, set $s_\nu^*=\min\{s_\nu^P,s_\nu^Q\}$ and consider the box $\Gamma_\nu$ defined as follows
    \begin{equation}
    	\Gamma_\nu=\left\{(t,x);\abs{t-s_\nu^*}\leq \rho_\nu,x\in [\min\{z_\nu^P,z_\nu^Q\},\max\{z_\nu^P,z_\nu^Q\}]\right\},
    \end{equation}
    with $\rho_\nu=\abs{P_\nu-Q_\nu}+2\frac{\abs{P_\nu-Q_\nu}}{\epsilon_2}$. Furthermore observe that the maximum strength of 1-waves crossing $P_\nu Q_\nu$ converges to zero as $\nu\rr\f$. Indeed if there exists a 1-shock of strength $\geq\epsilon_3>0$ (independent of $\nu$) crossing $P_\nu Q_\nu$, then we know that the shock wave should converge to some $y_{m_0}(t)$. This gives a contradiction as $P\notin y_m$ for any $m$ and $P_\nu,Q_\nu\rr P$ as $\nu\rr\f$, it implies that for large enough $\nu$, $P_\nu Q_\nu$ can not be close to $y_{m_0}(t)$. Therefore, all the 1-waves either interact in future time inside $\Gamma_\nu$ or they come from interactions inside $\Gamma_\nu$ before crossing $P_\nu Q_\nu$. 
\end{enumerate}
	\end{enumerate}
\end{proof}

%
%
%
%

\end{document}